\newtheorem{theorem}{Theorem}[section]
\newtheorem{definition}{Definition}[section]
\newtheorem{lemma}{Lemma}[section]
\newtheorem{remark}{Remark}[section]
\newtheorem{proposition}{Proposition}[section]
\numberwithin{equation}{section}
\renewcommand{\d}{\operatorname{d}}
\renewcommand{\u}{\mathbf{u}}
\newcommand{\w}{\mathbf{w}}
\newcommand{\ls}{\leqslant}
\newcommand{\ra}{\rightarrow}
\newcommand{\eps}{\varepsilon}
\renewcommand{\u}{{\bf u}}
\renewcommand{\v}{{\bf{v}}}
\newcommand{\z}{{\bf{z}}}
\newcommand{\tA}{\tilde{\mathcal{A}}}
\newcommand{\B}{\tilde{\mathcal{B}}}
\begin{document}

\title[Stationary solution to SEP in Bounded Domain]{Stationary solution to Stochastically Forced Euler-Poisson Equations in Bounded Domain: \\
 Part 1. 3-D Insulating Boundary}

\author{Yachun Li$^{1}$, Ming Mei$^{2,3,4}$, Lizhen Zhang$^{5}$}
\dedicatory{ {\small\it $^1$School of Mathematical Sciences, CMA-Shanghai, MOE-LSC, SHL-MAC, \\
 Shanghai Jiao Tong University, 200240, China}\\
{\small\it $^2$School of Mathematics and Statistics, Jiangxi Normal University\\ Nanchang, 330022, China} \\
{\small\it $^3$Department of Mathematics, Champlain College Saint-Lambert\\
	     \small\it     Saint-Lambert, Quebec, J4P 3P2, Canada} \\
{\small\it $^4$Department of Mathematics and Statistics, McGill University \\
	     \small\it     Montreal, Quebec, H3A 2K6, Canada  }\\
{\small\it $^5$School of Mathematical Sciences, Shanghai Jiao Tong University\\
	     \small\it   Shanghai, 200240, China }\\
{\tt Emails: ycli@sjtu.edu.cn; ming.mei@mcgill.ca; Zhanglizhen@sjtu.edu.cn;}
}

\begin{abstract}

This paper is concerned with  $3$-D stochastic Euler-Poisson equations with insulating boundary conditions forced by the Wiener process.  We first establish the global existence and uniqueness of the solution to the system, then we prove that the solution converges to its steady-state time-asymptotically. To obtain the converging rate, we need to develop weighted energy estimates, which are not required for the deterministic counterpart of the problem. Moreover, we observe that the invariant measure is just the Dirac measure generated by the steady-state, in which the time-exponential convergence rate to the steady-state plays an essential role.

Keywords: stability, stationary solution, stochastic Euler-Poisson equations, cylindrical Brownian motion, insulating boundary conditions

2020 AMS Subject Classification: 34D05, 34D20,35B35, 35K51, 60H15, 82D37.
\end{abstract}

\date{\today}

\maketitle
 \setcounter{tocdepth}{2}

\section{Introduction}

Euler-Poisson equations is important in the analysis and design of semiconductor devices, offering a more precise description of physical phenomena \cite{Guo2006StabilityOS} compared to the conventional drift-diffusion model. Furthermore, in the extreme ultraviolet (EUV) lithography, stochastic effects sometimes cause unwanted defects and pattern roughness in chips \cite{deBisschop2018StochasticEI}, that may impact the performance of a chip, or cause a device to fail. Hence, there is a pressing need to investigate the dynamic model of semiconductors perturbed by stochastic forces within mathematical frameworks.
 The stochastically forced Euler-Poisson equations (SEP for short) in a bounded smooth domain $U\subset \mathds{R}^{3}$ reads as
\begin{equation}
\left\{\begin{array}{l}\label{3-D Euler Poisson}
\rho_{t}+ \nabla \cdot \left(\rho \u\right) =0, \\
 \d  \left(\rho\u\right) + \left(\nabla \cdot \left(\rho \u\otimes \u \right)+\nabla P\left(\rho\right) - \rho \nabla \Phi\right)\d t = - \frac{\rho\u}{\tau}\d t + \mathbb{F}\left(\rho,\mathbf{u}\right) \d  W, \\
 \triangle \Phi=\rho-b,
\end{array}\right.
\end{equation}
where $``\d" $ in \eqref{3-D Euler Poisson} is the differential notation with respect to time $t$, in comparison to gradient $\nabla$ and Laplacian $\triangle$ for spatial derivatives, $\rho$ is the electron density of semiconductors,
$\u$ denotes the particle velocity. $P\left(\rho\right)$ is the pressure, $\Phi$ is the electrostatic potential,
$\tau$ is the velocity relaxation time and $b(x)$ is called the doping profile, which is positive and immobile.
The above mentioned unknowns $\rho=\rho(\omega,t,x)$, $\u=\u(\omega,t,x)$, $\Phi=\Phi(\omega,t,x)$, and $P\left(\rho\right)=P\left(\rho(\omega,t,x)\right)$ are stochastic processes  as  functions with respect to $\omega$, $t$, and $x$, where $\omega$ is a sample in the complete probability space $\left(\Omega, \mathcal{F}, \mathbb{P}\right)$. For convenience, we use the simplified notions $\rho$, $\u$, $\Phi$, and $P\left(\rho\right)$ here and hereafter.
 $W$ is an $\mathcal{H}$-valued cylindrical Brownian motion defined on the filtrated probability space $\left(\Omega, \mathcal{F}, \mathbb{P}\right)$, where $\mathcal{H}$ is an auxiliary separable Hilbert space, $\mathcal{F}$ is the filtration, see the definitions of filtration and Wiener process in Appendix \ref{append}.

 Let $\{e_{k}\}_{k=1}^{+\infty}$ be an orthonormal basis in $\mathcal{H}$, then the Brownian motion $W$ can be written in the form of
$W=\sum\limits_{k=1}^{+\infty}e_{k}\beta_{k}$, where $\{\beta_{k}(t); k \in \mathds{N}, t\geqslant 0\}$ is a sequence of independent, real-valued standard Brownian motions.
Let $H$ be a Bochner space.  $\mathbb{F}\left(\rho,\mathbf{u}\right)$ is an $H$-valued operator from $ \mathcal{H}$ to $ \mathcal{H}$. Denoting the inner product in $\mathcal{H}$ as $  \langle\cdot,\cdot \rangle $,
the inner product
\begin{equation}
\langle\mathbb{F}\left(\rho,\mathbf{u}\right),e_{k}\rangle = \mathbf{F}_{k}\left(\rho,\u\right)
\end{equation}
 is an $H$-valued vector function, which shows the strength of the external stochastic forces by
 \begin{equation}
 \mathbb{F}\left(\rho,\mathbf{u}\right)\d  W=\sum\limits_{k=1}^{+\infty}\mathbf{F}_{k}\left(\rho,\u\right)\d  \beta_{k} e_{k}. 
 \end{equation}
Throughout the paper, we assume that
\begin{align}\label{condition for F for 3-d}
\mathbf{F}_{k}\left(\rho,\u\right) = a_{k}\rho \u Y\left(\rho, \u\right),
\end{align}
where ${a_{k}}$ are positive constants, $Y\left(\rho, \u\right)$ is a smooth function of $\rho$ and $\u$, and can be bounded by the homogeneous polynomials.

Subjected to the stochastic Euler-Poisson equations \eqref{3-D Euler Poisson}, the proposed boundary is the insulating boundary:
\begin{align}\label{insulated boundary condition}
\u\cdot \nu=0, \quad \nabla \Phi\cdot \nu=0,
\end{align}
where $\nu$ is the outer normal vector of $U$; and the initial data is:
\begin{align}\label{initial conditions}
(\rho,\u_0,\Phi)|_{t=0}=\left(\rho_{0}\left(\omega,x\right),~\u_{0}\left(\omega,x\right), ~\Phi_{0}\left(\omega,x\right) \right),
\end{align}
which is given in  the probability space  $\left(\Omega, \mathcal{F}, \mathbb{P}\right)$, $\rho_{0}\left(\omega,x\right)>0$. Here and hereafter, we simply denote the initial data by
 $\left(\rho_{0},\u_{0},\Phi_{0}\right)$ without confusion.

The hydrodynamic model of semiconductors was first introduced by Blotekjaer \cite{Blotekjaer}, which is the deterministically dynamical model presented by Euler-Poisson equations mathematically. For 1-D case, the initial-boundary value problems to Euler-Poisson equations with the insulating boundary and the Ohmic contact boundary were studied by Hsiao-Yang \cite{Hsiao2001AsymptoticsOI}, Li-Markowich-Mei \cite{Li-Markowich-Mei-2002}, respectively, where the solutions are showed to converge to the corresponding subsonic steady-states time-asymptotically, where the doping profile is needed to be flat: $|b'(x)|\ll 1$. Such a restriction was then released by Nishibata-Suzuki \cite{Suzuki-2007} and Guo-Strauss \cite{Guo2006StabilityOS} independently. For $N$-D case, Guo-Strauss \cite{Guo2006StabilityOS} first considered the deterministic 3-D Euler-Poisson equations in bounded domain with insulating boundary, and showed the convergence of solutions to the 3-D subsonic steady-states. Subsequently, Mei-Wu-Zhang \cite{MeiWu2021Stability} investigated the convergence to the steady-states for the $N$-D radial Euler-Poisson equations with the Ohmic contact boundary. For the whole space without boundary effects, the Cauchy problems to deterministic Euler-Poisson equations were extensively studied in \cite{DMRS,Huang1,Huang2,Huang2011,Huang2011SIAM,Kawashima1984SystemsOA}. For the case of free boundary with vacuum, we refer to \cite{Luo1,Mai,Zeng} and the references therein. For the formulation of singularities in compressible Euler-Poisson equations and the large time behavior of Euler equations with damping, one can refer to \cite{Sideris-Thomases-Wang} and \cite{Wang-Chen1998JDE}, respectively.


When the hydrodynamic model of semiconductors is counted into the stochastic affections, it then becomes the stochastic
Euler-Poisson equations with uncertain extra disturbances, see
\eqref{3-D Euler Poisson} with the Wiener process $\mathbb{F}\left(\rho,\mathbf{u}\right) \d  W$. This is a new model for semiconductor devices and never touched yet. The main issue of the paper is to investigate this 3-D SEP in bounded domain with insulating boundary, and are going to prove the convergence of solutions to the stochastic steady states.  The coefficient function of Wiener process $\mathbf{F}_{k}\left(\rho,\u\right)$, depending on the solutions $\rho$ and $\u$, is called the multiplicative noise. In most cases, the multiplicative noise magnifies the perturbation and thereby complicating the well-posedness of solutions for evolution systems. The stochastic forces are at most H\"older-$\frac{1}{2}-$continuous in time $t$, resulting in reduced regularity of velocity with respect to time. So from a mathematical standpoint, the study of the stochastic problem helps us to study how the solutions to stochastic Euler-Poisson equations behave in the absence of strong regularity in time. Further, this encourages exploring whether the desirable property remains under the influence of particular types of noise. This is the first attempt to study the asymptotic behavior of solutions to stochastic 3-D Euler-Poisson equations.

 For stochastic evolution systems, the solution is called the stationary solution provided that the increment of solutions during evolution is time-independent. Originally, the study of stationary measures dates back to the works of Hopf \cite{Hopf1932TheoryOM}, Doeblin \cite{Doeblin1938}, Doob \cite{Doob1953}, Halmos \cite{Halmos1946, Halmos1947}, Feller \cite{Feller1957}, and Harris and Robbins \cite{Harris-Robbins1953, Harris1956}, who contributed to the theory of discrete Markov processes from 1930s to 1950s. The study of invariant measure of fluid models dates back to Cruzeiro \cite{Cruzeiro1989} for stochastic incompressible Navier-Stokes equations in 1989, by Galerkin approximation with dimensions $D\geqslant 2$.
 Flandoli \cite{Flandoli1994} proved existence of an invariant measure by the ``remote start" method for 2-D incompressible Navier-Stokes equations in 1994.
One year later Flandoli-Gatarek \cite{Flandoli1-Gatarek1995} showed existence of stationary solution for 3-D incompressible Navier-Stokes equations by a different method with \cite{Cruzeiro1989}.
  In 2002, Mattingly \cite{Mattingly2002} proved the existence of exponentially attracting invariant measure with respect to initial data, for incompressible N-S equations.
Later, Goldys-Maslowski \cite{Goldys-Maslowski2005} showed that transition measures of the 2-D stochastic Navier-Stokes equations converge exponentially fast to the corresponding invariant measures in the distance of total variation. Then for 3-D case, Da Prato and Debussche \cite{DaPrato-Debussche2003} constructed a transition semigroup for 3-D stochastic Navier-Stokes equations without the uniqueness, which allows for rather irregular solutions. Flandoli-Romito \cite{Flandoli-Romito2008} used the classical Stroock-Varadhan
type argument to find the almost sure Markov selection. The above works are for the incompressible case. For stochastic compressible Navier-Stokes equations, Breit-Feireisl-Hofmanov\'a-Maslowski \cite{Breit-Feireisl-Hofmanova-Maslowski2019} proved the existence of stationary solutions. Compared to Navier--Stokes equations, the regularity effect of viscosity is lost for Euler system.
Hofmanov\'a-Zhu-Zhu \cite{Hofmanova-Zhu-Zhu2022} selected the dissipative global martingale solutions to the stochastic incompressible Euler system, and obtained the non-uniqueness of strong Markov solutions. Very recently, they \cite{Hofmanova-Zhu-Zhu2024} showed that stationary solution to the Euler equations is a vanishing viscosities limit in law of stationary analytically weak solutions to Navier-Stokes equations. In terms of the non-uniqueness studies, some scholars believe that a certain stochastic perturbation can provide
a regularizing effect of the underlying PDE dynamics. For instance, Flandoli-Luo \cite{Flandoli-Luo2021} showed that a noise of transport type prevents a vorticity blow-up in the incompressible Navier-Stokes equations. A linear multiplicative noise prevents the blow up of the velocity with high probability for the 3-D Euler system, which was shown by Glatt-Holtz-Vicol \cite{Glatt-Holtz-Vicol2014}. Gess-Souganidis \cite{Gess-Souganidis2017} investigated the large-time behavior and established the existence of an invariant measure for stochastic scalar conservation laws, demonstrating that an algebraic decay rate in time holds. In their work, they introduced a particular type of noise that provided stronger regularization properties for the problem. Then Dong-Zhang-Zhang \cite{DZZ2023} proved the existence of stationary solutions with the multiplicative noise.  For stochastic conservation laws, Da Prato-Gatarek studied the existence and uniqueness of invariant measure for stochastic Burgers equation \cite{DaPrato-Gatarek1995}. Da Prato-Zabczyk listed the basic theory of stationary solutions of general stochastic PDEs in view of invariant measure in book \cite{Da-Prato-Zabczyk2014}. Bedrossian-Liss \cite{Bedrossian-Liss2024} gave the existence of stationary measures for stochastic ordinary differential equations with a nonlinear term. To the best of our knowledge, the stationary solutions of SEP have not been explored previously. For our SEP, the electrostatic potential term $\rho \nabla \Phi\d t$ and the relaxation term $\frac{\rho\u}{\tau}\d t$ are actually damping terms providing better regularity than Euler equations. In this paper, we could show the existence and uniqueness of invariant measure in more regular space.

 It is worth noting that the stationary solution we consider is in view of invariant measure. In this paper, the concepts of stationary solution for stochastically forced system \eqref{3-D Euler Poisson} and steady state $\left(\bar{\rho}(\omega, x), \bar{\u}(\omega, x), \bar{\Phi}(\omega, x)\right)$ for the following deterministic system \eqref{steady state for nd insulating} are distinguished. Firstly, we establish the global existence and uniqueness of perturbed solutions around the steady state for the Euler-Poisson equations. Subsequently, we demonstrate the existence of stationary solutions and invariant measure based on the {\it a priori} energy estimates and weighted energy estimates.

We recall the steady state and recount the basic conclusion on the existence and uniqueness of $\left(\bar{\rho}(\omega, x), \bar{\u}(\omega, x), \bar{\Phi}(\omega, x)\right)$.
Within the probability space $\left(\Omega, \mathcal{F}, \mathbb{P}\right)$, the steady state $\left(\bar{\rho}(\omega, x), \bar{\u}(\omega, x), \bar{\Phi}(\omega, x)\right)$ are assumed to adhere to the following equations
 \begin{align}
\left\{\begin{array}{l}\label{steady state for nd insulating}
\nabla \cdot \left(\bar{\rho} \bar{\u}\right) =0, \\
\nabla \cdot \left(\bar{\rho} \bar{\u}\otimes \bar{\u}\right) + \nabla P\left(\bar{\rho}\right) - \bar{\rho} \nabla \bar{\Phi}= - \frac{\bar{\rho}\bar{\u}}{\tau}, \\
\triangle \bar{\Phi} =\bar{\rho}-b(x).
 \end{array}\right.
\end{align}
For the deterministic steady state with insulating boundary condition, Guo-Strauss \cite{Guo2006StabilityOS} gave the proof for existence and uniqueness of $\left(\bar{\rho}(x ), \bar{\u}(x ), \bar{\Phi}(x )\right)=\left(\bar{\rho}(x ), 0, \bar{\Phi}(x )\right)$.
By substituting ${\left(\ref{steady state for nd insulating}\right)}_{1}$ into ${\left(\ref{steady state for nd insulating}\right)}_{2}$, and take $\nabla \cdot $ on ${\left(\ref{steady state for nd insulating}\right)}_{2}$, we have
\begin{align}
\nabla \cdot \left(\bar{\rho}\bar{\u}\cdot\nabla \bar{\u} \right)+\triangle P\left(\bar{\rho}\right) -\nabla \cdot \left(\bar{\rho} \nabla \bar{\Phi}\right)=0.
\end{align}
If $\bar{\u}=0$, it deduces to
\begin{align} \label{equation of bar rho}
 P'\left(\bar{\rho}\right)\triangle \bar{\rho} +  P''\left(\bar{\rho}\right)\left|\nabla \bar{\rho}\right|^{2} -\nabla \bar{\rho}\nabla \bar{\Phi}-\bar{\rho}\left(\bar{\rho}-b\right)=0,
\end{align}
where $ P'\left(\bar{\rho}\right)>0= \left|\bar{\u}\right|^{2}$ so that the equation of $\bar{\rho}$ given in \eqref{equation of bar rho}, is uniformly elliptic. In this paper, we consider the subsonic case, i.e., the condition $P'\left(\rho\right)> \left|\u\right|^{2}$ holds under consideration.
For every $\omega \in \Omega$, $\left(\bar{\rho}(\omega, x), 0, \bar{\Phi}(\omega, x)\right)= \left(\bar{\rho}(x), 0, \bar{\Phi}(x)\right)$ is the unique solution of \eqref{steady state for nd insulating}, which is called steady state in this paper. We will denote $\left(\bar{\rho}(\omega, x), 0, \bar{\Phi}(\omega, x)\right)$ by $\left(\bar{\rho}, 0, \bar{\Phi}\right)$ for convenience in the following. The law of steady state is Dirac measure $\delta_{\bar{\rho}}\times \delta_{0}\times \delta_{\bar{\Phi}}$, see Appendix \ref{append}. We conclude the following lemma for steady state. Here $\bar{U}$ denotes the closed set of $U$.
\begin{proposition}\label{exist of steady state}
 Let $b(x)>0 $ in $\bar{U}$ and $P:\left(0, \infty\right)\rightarrow \left(0, \infty\right) $ be smooth with $P\left(0\right)=0$. Then there exists $\left(\bar{\rho} , \bar{\u} , \bar{\Phi} \right)$, $\forall \omega\in \Omega$, a unique smooth steady-state solution of the insulating problem with the Neumann boundary condition
\begin{align}
\frac{\partial \bar{\Phi} }{\partial \nu}|_{\partial U}\equiv 0,
\end{align}
such that there holds
\begin{align}
\bar{\rho} >\underline{\rho}>0,\quad \left|\nabla \bar{\rho} \right|>0,  \quad\bar{\Phi} >0,\quad \forall x\in \bar{U}, \quad \mathbb{P} ~{\rm a.s.},
\end{align}
where $\underline{\rho}$ is a constant, and
\begin{align}
  \int_{U} \bar{\rho} \d x=\int_{U}b(x)\d x, \quad \mathbb{P} ~{\rm a.s.}
\end{align}
\end{proposition}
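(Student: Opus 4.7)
The plan is to observe that the steady-state system \eqref{steady state for nd insulating}, together with the Neumann boundary condition and the fixed doping profile $b(x)$, is purely deterministic with no $\omega$-dependence whatsoever. Accordingly, it suffices to solve the deterministic PDE once and lift the solution to a constant random field $\omega\mapsto(\bar\rho(x),0,\bar\Phi(x))$, which automatically yields every assertion $\mathbb P$-a.s., while uniqueness in $\omega$ is inherited from the deterministic uniqueness for each realization.

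First I would take the ansatz $\bar{\u}\equiv 0$, which satisfies the continuity equation trivially and collapses the momentum equation to $\nabla P(\bar\rho)=\bar\rho\nabla\bar\Phi$. Introducing the enthalpy $h$ with $h'(s)=P'(s)/s$ for $s>0$, this is equivalent to $\nabla h(\bar\rho)=\nabla\bar\Phi$, so on the connected domain $U$ one has $\bar\Phi=h(\bar\rho)-c$ for some additive constant $c$. Substituting into the Poisson equation reduces everything to the scalar semilinear problem
\[\Delta h(\bar\rho)=\bar\rho-b(x),\qquad \partial_\nu h(\bar\rho)\big|_{\partial U}=0,\qquad \int_U\bar\rho\,\d x=\int_U b\,\d x,\]
where the integral constraint arises by integrating the Poisson equation against $1$ and using the Neumann condition.

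Second, this reduced problem carries a convex variational structure: with $G'=h$, the functional $J(\rho)=\int_U\bigl[G(\rho)+\tfrac12|\nabla\Phi[\rho]|^2\bigr]\,\d x$, defined on $\{\rho\in H^1(U):\rho>0,\ \int_U\rho=\int_U b\}$ with $\Phi[\rho]$ the mean-zero Neumann solution of $\Delta\Phi=\rho-b$, is strictly convex since $h'>0$ and coercive on this affine slice, so direct minimization together with standard elliptic regularity (using the smoothness of $P$ and $b$) produces a unique smooth critical point. The pointwise lower bound $\bar\rho\geq\underline\rho>0$ follows from the strong maximum principle applied to $h(\bar\rho)$: at an interior minimum $\Delta h(\bar\rho)\geq 0$ forces $\bar\rho\geq b\geq\min_{\bar U}b>0$, and a parallel argument at a boundary minimum uses Hopf's lemma with the Neumann data; positivity $\bar\Phi>0$ is then arranged by fixing the constant $c$ appropriately.

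The step I expect to be the most delicate is the pointwise non-vanishing gradient $|\nabla\bar\rho|>0$ throughout $\bar U$. Since $h$ is a strict monotone diffeomorphism, $\nabla\bar\rho$ and $\nabla\bar\Phi$ vanish at exactly the same points; ruling out an interior vanishing point has to invoke the Poisson equation together with a nondegeneracy feature of $b$, while ruling out vanishing at boundary points is a Hopf-lemma consequence of $\partial_\nu\bar\Phi=0$. This is precisely the structural argument carried out by Guo--Strauss \cite{Guo2006StabilityOS} for the deterministic insulating problem; accordingly, the cleanest route is simply to quote their existence, uniqueness, and regularity theorem for $(\bar\rho(x),0,\bar\Phi(x))$ and then lift that deterministic object into the probability space $(\Omega,\mathcal F,\mathbb P)$ to obtain the full statement of Proposition \ref{exist of steady state}.
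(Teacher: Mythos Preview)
Your approach matches the paper's exactly: the paper does not prove this proposition but simply records it and cites Guo--Strauss \cite{Guo2006StabilityOS} for the deterministic existence and uniqueness, then notes that the resulting $(\bar\rho,0,\bar\Phi)$ is $\omega$-independent---precisely what you arrive at in your final paragraph.

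One caveat on your additional exposition: the Hopf-lemma argument you invoke for $|\nabla\bar\rho|>0$ at boundary points does not work as written. Since $\nabla\bar\Phi=h'(\bar\rho)\nabla\bar\rho$ and $\partial_\nu\bar\Phi=0$ on $\partial U$, the Neumann condition forces $\partial_\nu\bar\rho=0$ there; at any boundary extremum of $\bar\rho$ the tangential gradient also vanishes, so the full gradient is zero, not nonzero. More generally a smooth non-constant function on a compact domain must have critical points, so the pointwise condition $|\nabla\bar\rho|>0$ on all of $\bar U$ is not something one can extract from generic elliptic arguments. You are right to flag this as the delicate step and to defer to the cited reference rather than attempt an ad hoc justification.
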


Let $Q\left(\rho\right) $ be such that $\nabla Q\left(\rho\right) =\nabla \Phi $~(cf. \cite{Guo2006StabilityOS}). Then,
the steady state satisfies
\begin{align}
\nabla \bar{Q}(\bar{\rho}) =\nabla \bar{\Phi}, \quad  \triangle \bar{\Phi} =\bar{\rho} -b(x).
\end{align}

We consider the solutions $\left(\rho, \u , \Phi  \right) $  of hydrodynamic system around the steady state $\left(\bar{\rho} ,0,\bar{\Phi} \right) $ and we denote
\begin{align}
\sigma =\rho -\bar{\rho} , \quad \phi =\Phi -\bar{\Phi} .
\end{align}

Our main result is on the existence of solutions near the steady state, and asymptotic stability for insulating boundary condition.

 We denote by $\left\|\cdot\right\|$, $\left\|\cdot\right\|_{\infty}$, and $\left\|\cdot\right\|_{k}$ the $L^{2}(U)$-norm, $L^{\infty}(U)$-norm, and $H^{k}(U)$-norm, respectively. Let $\mathcal{L}\left(\cdot\right)$ be the law of random variables in $\left(\Omega, \mathcal{F}, \mathbb{P}\right)$, see the definition in Appendix \ref{append}. 
$L^{2m}\left(\Omega; C\left([0,T]; H^{k}\left(U\right)\right)\right)$ is the space in which the $2m$-th moment of $C\left([0,T]; H^{k}\left(U\right)\right)$-norm of random variables is bounded. We state our main theorems as follows.

\begin{theorem}[Global existence]
Let $U$ be a smooth bounded domain in $\mathds{R}^3$ 
and the pressure $P:(0, \infty) \rightarrow(0, \infty)$ be a smooth function, with $P(\cdot)>0$ and $P^{\prime}(\cdot)>0$. Let $ \left(\bar{\rho}, 0, \bar{\Phi}\right)$ be the smooth steady state in Proposition \ref{exist of steady state}.
and
\begin{align}\label{Initial formula of phi}
\triangle \Phi_{0}=\rho_{0}-b(x),
\end{align}
then in $\left(\Omega, \mathcal{F}, \mathbb{P}\right)$, there exists a unique global-in-time strong solution $(\rho,\u,\Phi)$ to the initial and boundary problem \eqref{3-D Euler Poisson}-\eqref{insulated boundary condition}-\eqref{initial conditions}:
\begin{align}
 \rho, ~ \u \in  L^{2m}\left(\Omega; C\left([0,T]; H^{3}\left(U\right)\right)\right), \quad \Phi\in L^{2m}\left(\Omega; C\left([0,T]; H^{5}\left(U\right)\right)\right), \forall ~T>0,
\end{align}
up to a modification, for any fixed integer $m\geqslant 2$.
\end{theorem}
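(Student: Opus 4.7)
The plan is to pass to perturbation variables around the steady state, build a solution locally via an approximation-plus-stopping-time scheme, derive uniform $H^{3}$ a priori estimates by combining a symmetric-hyperbolic energy with It\^o's formula, and extend the solution to every finite time using the relaxation dissipation and the Poisson regularization.

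First I would set $\sigma=\rho-\bar{\rho}$ and $\phi=\Phi-\bar{\Phi}$, so that \eqref{3-D Euler Poisson} for $(\sigma,\u)$ becomes a quasilinear symmetrizable hyperbolic system with damping $-\u/\tau$ coupled to the Neumann Poisson problem $\triangle\phi=\sigma$, $\partial_{\nu}\phi|_{\partial U}=0$; the initial compatibility \eqref{Initial formula of phi} together with the continuity equation preserves the solvability condition $\int_{U}\sigma\,dx=0$, so elliptic regularity yields $\|\phi\|_{k+2}\lesssim\|\sigma\|_{k}$ and the $H^{5}$-regularity of $\Phi$ follows from the $H^{3}$-regularity of $\rho$. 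For local existence I would truncate the coefficients by a smooth cut-off $\chi_{R}(\|(\sigma,\u)\|_{3})$, apply a Faedo--Galerkin projection on $\u$, and run a Picard iteration in $L^{2m}(\Omega;C([0,T_{R}];H^{3}))$; the assumption \eqref{condition for F for 3-d} guarantees that $\mathbb{F}$ is Lipschitz on bounded $H^{3}$-sets, so the Burkholder--Davis--Gundy inequality closes the contraction and produces a strong solution up to the stopping time $\tau_{R}=\inf\{t:\|(\sigma,\u)(t)\|_{3}\geqslant R\}$.

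The core step is the uniform $H^{3}$ a priori estimate. For each multi-index $|\alpha|\leqslant 3$ I would apply $\partial^{\alpha}$ to the continuity and momentum equations and form the symmetrized energy
\begin{equation*}
\mathcal{E}(t)=\sum_{|\alpha|\leqslant 3}\int_{U}\Bigl(\rho|\partial^{\alpha}\u|^{2}+\frac{P'(\rho)}{\rho}|\partial^{\alpha}\sigma|^{2}+|\nabla\partial^{\alpha}\phi|^{2}\Bigr)\,dx.
\end{equation*}
Applying It\^o's formula, the drift reproduces the deterministic identity of Guo--Strauss \cite{Guo2006StabilityOS}, producing the relaxation dissipation $\tau^{-1}\|\u\|_{3}^{2}$ together with a density dissipation arising from the Poisson--pressure coupling, while the commutator and boundary contributions are handled by a tangential/normal decomposition that exploits $\u\cdot\nu=\partial_{\nu}\phi=0$. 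The It\^o correction $\sum_{k}\|\partial^{\alpha}\mathbf{F}_{k}\|^{2}\,dt$ is, by the multiplicative structure \eqref{condition for F for 3-d}, bounded through Moser-type nonlinear estimates by $\mathcal{P}(\|(\sigma,\u)\|_{3})\,\|\u\|_{3}^{2}$ and hence absorbed by the relaxation dissipation for moderate data. Raising to the $2m$-th power, invoking BDG for the martingale part, and applying Gronwall gives
\begin{equation*}
\mathbb{E}\sup_{t\in[0,T\wedge\tau_{R}]}\|(\sigma,\u)(t)\|_{3}^{2m}\leqslant C\bigl(T,m,\|(\sigma_{0},\u_{0})\|_{3}\bigr)
\end{equation*}
uniformly in $R$; a Chebyshev argument then forces $\mathbb{P}(\tau_{R}\leqslant T)\to 0$ as $R\to\infty$, yielding a global solution on $[0,T]$ for every $T>0$.

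Uniqueness is proved pathwise by writing the equations for the difference $(\sigma_{1}-\sigma_{2},\u_{1}-\u_{2})$, applying It\^o's formula for a lower-order energy (say $L^{2}$, since the higher regularity is already at hand), and closing by BDG and Gronwall. The main obstacle I anticipate is the closure of the $H^{3}$ estimate against the It\^o correction at the top derivative order, since $\partial^{\alpha}(a_{k}\rho\u Y(\rho,\u))$ with $|\alpha|=3$ naively calls for four derivatives of $(\rho,\u)$; this is circumvented by exploiting the exact product form in \eqref{condition for F for 3-d}, which always pulls out a factor of $\u$, so that every quadratic-variation contribution can be written as $\|\u\|_{3}^{2}$ times a polynomial in $\|(\sigma,\u)\|_{3}$ and absorbed by the relaxation term, yielding the claimed $L^{2m}(\Omega;C([0,T];H^{3}))$-regularity.
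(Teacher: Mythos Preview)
Your overall strategy is sound and close in spirit to the paper, but two points deserve correction.

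\textbf{Local existence mechanism.} The paper does not use a Faedo--Galerkin projection or a stopping-time/truncation scheme. Instead it symmetrizes the perturbed system (Step~1), \emph{linearizes} it by freezing the coefficients at a given $\hat{\w}$, and runs a direct Picard iteration in $L^{2m}(\Omega;C([0,T_1];H^3))$ via Banach's fixed point theorem (Steps~2--3), closing both the onto-mapping bound and the contraction with It\^o's formula plus BDG. Your truncation/stopping-time route is a legitimate alternative used elsewhere in stochastic fluids, but the Galerkin step you insert is ill-suited here: the system is first-order hyperbolic with the characteristic boundary condition $\u\cdot\nu=0$, and a spectral projection on $\u$ alone breaks the symmetric structure $\mathcal{D}\,\d\w + \tilde{\mathcal{A}}^i\w_{,i}\,\d t=\cdots$ that makes the boundary integrals vanish and the top-order commutators close at $H^3$. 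If you keep this route you must explain how the projection commutes with the symmetrizer and the boundary trace without losing a derivative; the paper sidesteps the issue entirely by iterating in the full space.

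\textbf{The density dissipation is not free.} Your sketch asserts that the symmetric energy identity ``reproduces the deterministic identity of Guo--Strauss, producing the relaxation dissipation $\tau^{-1}\|\u\|_3^2$ together with a density dissipation.'' In fact the symmetrized energy $\int_U \mathcal{D}\w\cdot\w\,\d x$ only yields $\int_0^t\|\u\|_k^2\,\d s$ directly; the time-integrated control of $\|\sigma\|_k^2$ (and of $\|\nabla\phi\|^2$) is obtained in the paper by a \emph{separate} estimate: one tests the momentum equation against $\sigma$ (respectively $\nabla\phi$), uses $\d u^i\,\sigma = \d(u^i\sigma)-u^i\,\d\sigma$ via It\^o, and crucially invokes the steady-state property $|\nabla\bar\rho|>0$ from Proposition~1.1 so that the left side is coercive, namely $\int_U |Q''(\bar\rho)\nabla\bar\rho|\,|\sigma|^2\,\d x$. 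This is the step that closes the a~priori estimate (both in the small-perturbation version and in the large-data Gr\"onwall version of \S2.3.2); without it your inequality for $\mathcal{E}(t)$ does not bound $\int_0^t\|\sigma\|_3^2\,\d s$, and the claimed absorption of the It\^o correction by ``the relaxation term'' fails. You should make this mechanism explicit rather than fold it into the symmetric energy.
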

Moreover, for the small perturbation problem, there hold the the existence of invariant measure and decay rate.
\begin{theorem}[Convergence to steady state]
Assume that the stochastic forces satisfies
\begin{align}\label{2 Condition for stochastic force}
 \sum a_{k}^{2}=1, \quad  \left|Y\left(\rho, \u\right)\right|\ls C \left|\rho \u\right|,\quad \left\|\nabla_{\rho,\u} Y\left(\rho, \u\right)\right\|_{L^{\infty}}\ls C, \quad \left\|\nabla_{\rho,\u}^{2}Y\left(\rho, \u\right)\right\|_{L^{\infty}}\ls C.
\end{align}
Here, $\nabla_{\rho,\u}$ denotes the differential operator with respect to $\rho$ and $\u$.
If there exists a constant $\eps>0$ such that the initial condition $\left(\rho_{0}, \u_{0}, \Phi_{0}\right)$ satisfies \eqref{Initial formula of phi} and
\begin{align}\label{initial conditions assumption}
\mathbb{E}\left[\left(\left\| \rho_{0}- \bar{\rho}\right\|_{3}^{2}+\left\| \u_{0}\right\|_{3}^{2}+ \left\|\nabla\Phi_{0}-\nabla\bar{\Phi}\right\|^{2}\right)^{m}\right] \ls \eps^{2m}, \quad \forall ~m\geqslant 2,
\end{align}
 then there hold the decay rate and the existence of invariant measure:
\begin{enumerate}
  \item there are positive constants $C$ and $\alpha$ such that the expectation
\begin{align}\label{asymptotic behavior}
& \mathbb{E}\left[\left(\sup\limits_{s\in[0,t]}\left(\left\| \rho - \bar{\rho} \right\|_{3}^{2}+\left\| \u\right\|_{3}^{2}+\left\| \nabla \Phi-\nabla \bar{\Phi} \right\|^{2} \right)\right)^{m}\right] \notag\\
 \ls &C e^{-\alpha mt }\mathbb{E}\left[\left(\left\| \rho_{0} - \bar{\rho}\right\|_{3}^{2}+\left\| \u_{0} \right\|_{3}^{2}+ \left\|\nabla\Phi_{0} -\nabla\bar{\Phi} \right\|^{2}\right)^{m}\right],
\end{align}
holds, where $C$ is independent on $t$ and $C$ is the $m$-th power of some constant;
  \item the invariant measure generated by $\frac{1}{T}\int_{0}^{T} \mathcal{L}\left(\rho\right)\times\mathcal{L}\left(\u\right)\times\mathcal{L}\left(\Phi\right) \d t$ is exactly the Dirac measure of steady state $\left(\bar{\rho}, 0, \bar{\Phi}\right)$.
\end{enumerate}
\end{theorem}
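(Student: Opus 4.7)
The plan is to prove the two conclusions in tandem, using a weighted $H^3$--energy argument combined with stochastic Gronwall.  The key structural observation I would exploit first is that the noise coefficient $\mathbf{F}_{k}(\rho,\u)=a_{k}\rho\u\,Y(\rho,\u)$ vanishes identically at the steady state $(\bar\rho,0,\bar\Phi)$, so $(\bar\rho,0,\bar\Phi)$ is indeed a strong solution of \eqref{3-D Euler Poisson}; this makes the Dirac measure $\delta_{\bar\rho}\times\delta_{0}\times\delta_{\bar\Phi}$ an obvious candidate for the invariant measure and suggests that any Itô correction generated by the noise will be quadratic in $\u$ and therefore absorbable by the relaxation term $-\rho\u/\tau$.

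\textbf{Step 1: Perturbation system and weighted functional.}
Write $\sigma=\rho-\bar\rho$, $\phi=\Phi-\bar\Phi$ and subtract \eqref{steady state for nd insulating} from \eqref{3-D Euler Poisson} to obtain a quasi--linear symmetrizable hyperbolic system for $(\sigma,\u,\phi)$ with damping, linear good term $\bar\rho\nabla\phi$, and stochastic forcing that is a smooth function of $(\sigma,\u)$ vanishing at $\u=0$.  Following the Guo--Strauss type energy framework I would introduce the modified energy
\begin{align*}
\mathcal{E}(t)\;=\;\tfrac12\sum_{|\alpha|\leqslant 3}\int_{U}\Bigl(\tfrac{P'(\rho)}{\rho}|\partial^{\alpha}\sigma|^{2}+\rho|\partial^{\alpha}\u|^{2}\Bigr)\,dx\;+\;\tfrac12\|\nabla\phi\|^{2},
\end{align*}
weighted so that applying Itô's formula and integrating by parts produces, on the deterministic side, a dissipation term $\gtrsim \|\u\|_{3}^{2}$ from the relaxation, together with the standard subsonic cross term for $\sigma$ that gets controlled through the elliptic equation $\triangle\phi=\sigma$.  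Because $\u\cdot\nu=0$ and $\nabla\phi\cdot\nu=0$, all boundary terms in the integration by parts vanish, just as in the deterministic analogue.

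\textbf{Step 2: Stochastic energy inequality and Itô correction.}
Applying Itô's formula to $\mathcal{E}^{m}$ yields a martingale part plus a drift, the drift being $-m\mathcal{E}^{m-1}\mathcal{D}+\tfrac12 m(m-1)\mathcal{E}^{m-2}\langle d\mathcal{E}\rangle+m\mathcal{E}^{m-1}(\text{Itô corr.})$ where $\mathcal{D}$ is the dissipation.  Using \eqref{2 Condition for stochastic force} together with $\mathbf{F}_{k}=a_{k}\rho\u Y$ and $\sum a_{k}^{2}=1$, each Itô correction at order $|\alpha|\leqslant 3$ is bounded by $C\|\u\|_{3}^{2}(1+\|\sigma\|_{3}^{2}+\|\u\|_{3}^{2})$; for small data this is absorbed by the relaxation dissipation.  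The same bound controls the quadratic variation appearing in the $\mathcal{E}^{m-2}\langle d\mathcal{E}\rangle$ term, and the martingale part is estimated by the Burkholder--Davis--Gundy inequality: the resulting square root of $\int_{0}^{t}\mathcal{E}^{2m-2}\mathcal{D}\,ds$ is split by Young's inequality, returning a small multiple of $\mathbb{E}\sup\mathcal{E}^{m}$ that can be moved to the left--hand side.  Combining everything produces an inequality of the form
\begin{align*}
\mathbb{E}\Bigl[\sup_{s\leqslant t}\mathcal{E}(s)^{m}\Bigr]+\mathbb{E}\int_{0}^{t}\mathcal{E}(s)^{m-1}\mathcal{D}(s)\,ds\;\leqslant\;\mathbb{E}[\mathcal{E}(0)^{m}]\;+\;C\eps\,\mathbb{E}\int_{0}^{t}\mathcal{E}(s)^{m}\,ds.
\end{align*}
A Poincaré--type coercivity $\mathcal{D}\gtrsim \mathcal{E}$, which comes from the Guo--Strauss estimate that $\|\sigma\|_{3}+\|\nabla\phi\|\lesssim\|\u\|_{3}$ for subsonic perturbations (re--deriving this under stochastic integration by combining the $\sigma$--equation with $\triangle\phi=\sigma$), then gives the exponential decay \eqref{asymptotic behavior} by a (deterministic) Gronwall applied to $f(t)=\mathbb{E}[\sup_{s\leqslant t}\mathcal{E}^{m}]e^{\alpha m t}$ with $\alpha$ small.

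\textbf{Step 3: Identification of the invariant measure.}
For any bounded continuous test function $\varphi$ on the state space, \eqref{asymptotic behavior} and Chebyshev give $\mathcal{L}(\rho(t),\u(t),\nabla\Phi(t))\rightharpoonup \delta_{(\bar\rho,0,\nabla\bar\Phi)}$ weakly as $t\to\infty$, hence the Cesàro average $\frac{1}{T}\int_{0}^{T}\mathcal{L}(\rho)\times\mathcal{L}(\u)\times\mathcal{L}(\Phi)\,dt$ converges to $\delta_{\bar\rho}\times\delta_{0}\times\delta_{\bar\Phi}$.  The standard Krylov--Bogoliubov limit is then an invariant measure for the Markov semigroup; because $(\bar\rho,0,\bar\Phi)$ is a deterministic fixed point of the dynamics (the noise vanishes there), $\delta_{(\bar\rho,0,\bar\Phi)}$ is itself invariant, and uniqueness follows from \eqref{asymptotic behavior} which forces any invariant probability to charge only the steady state.

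\textbf{Main obstacle.}
The delicate point is Step 2: deriving the weighted $H^{3}$ energy identity with the correct sign of the Itô correction so that it is genuinely controlled by the relaxation dissipation rather than by the ``good'' $\bar\rho\nabla\phi\cdot\u$ term (which only gives control of $\nabla\phi$ after an elliptic manipulation).  Because $\mathbf{F}_{k}$ depends on $(\rho,\u)$, the Itô correction at order three involves third derivatives of $Y(\rho,\u)$ and cross terms between $\sigma$ and $\u$ that have no obvious small prefactor; handling these requires the weighted estimates alluded to in the abstract, namely carefully choosing weights involving $\bar\rho$ and $P'(\bar\rho)$ so that the stochastic second--order terms split cleanly into (i) terms quadratic in $\u$ absorbed by $\|\u\|_{3}^{2}$, and (ii) terms quadratic in $\sigma$ multiplied by the small parameter $\eps$ coming from \eqref{initial conditions assumption} and propagated through the a priori smallness maintained by a continuation argument on the stopping time $\tau_{R}=\inf\{t:\mathcal{E}(t)\geqslant R\}$.
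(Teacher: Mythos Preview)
Your overall architecture (symmetrized $H^3$ energy, BDG for the martingale, Krylov--Bogoliubov for the invariant measure) matches the paper, and your Step~3 is essentially what the paper does in Section~4. The substantive gap is in Step~2, at two linked places.

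First, the ``Poincar\'e--type coercivity $\mathcal{D}\gtrsim\mathcal{E}$'' you invoke, phrased as the pointwise bound $\|\sigma\|_{3}+\|\nabla\phi\|\lesssim\|\u\|_{3}$, is not available. In Guo--Strauss the control of $\sigma$ comes through \emph{temporal} derivatives, and the paper points out explicitly that this route is blocked here because $\d W$ destroys time regularity. What the paper actually obtains is only \emph{time--integrated} control: one multiplies the velocity equation by $\sigma$ (respectively $\nabla\phi$) and uses $|\nabla\bar\rho|>0$, $\triangle\phi=\sigma$ to produce estimates of $\int_{0}^{t}\|\sigma\|_{3}^{2}\,ds$ and $\int_{0}^{t}\|\nabla\phi\|^{2}\,ds$ in terms of quantities already controlled (cf.\ \eqref{zero order plus estimate eq}--\eqref{zero order plus estimate}). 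There is no pointwise coercivity of the dissipation.

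Second, and more seriously, even granting a time--integrated version of $\mathcal{D}\gtrsim\mathcal{E}$, your final Gronwall step does not yield exponential decay. After BDG the inequality you write is of the form
\[
\mathbb{E}\Bigl[\sup_{s\leqslant t}\mathcal{E}^{m}\Bigr]+c\,\mathbb{E}\!\int_{0}^{t}\mathcal{E}^{m}\,ds\;\leqslant\;\mathbb{E}[\mathcal{E}(0)^{m}],
\]
but $t\mapsto\mathbb{E}[\sup_{s\leqslant t}\mathcal{E}^{m}]$ is nondecreasing, so this gives boundedness and nothing more; you cannot run Gronwall on $f(t)=e^{\alpha mt}\mathbb{E}[\sup_{s\leqslant t}\mathcal{E}^{m}]$ because the dissipation integral involves $\mathbb{E}[\mathcal{E}(s)^{m}]$, not the sup. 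This is precisely the obstacle the paper flags in the introduction (``the a~priori estimates are already in the form of time integrals rather than an ODI''). The paper's remedy is to insert the exponential weight \emph{before} BDG: one multiplies the differential energy identity by $e^{\alpha s}$, so that $\int_{0}^{t}e^{\alpha s}\,\d\mathcal{E}$ and $\alpha\int_{0}^{t}e^{\alpha s}\mathcal{E}\,ds$ appear together and the latter is absorbed by the weighted dissipation $\int_{0}^{t}e^{\alpha s}\mathcal{D}\,ds$; only then does one estimate the (now weighted) stochastic integral by BDG, picking up an overall factor $e^{\alpha mt}$ on the right that, combined with the unweighted a~priori bound and smallness of $\|\w\|_{3}$, closes to \eqref{asymptotic behavior}. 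Your ``main obstacle'' paragraph focuses on the It\^o correction signs, but that is not where the argument fails; the missing idea is the weighted--in--time estimate carried out prior to BDG.
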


\begin{remark}
After passing to the limit $t\ra \infty$ in \eqref{asymptotic behavior}, the stationary solution coincides with the steady state $\mathbb{P}$ {\rm a.s.}, since the $m$-th moment of their difference tends to zero.
\end{remark}

\begin{remark}
If for any $\omega\in \Omega$,
\begin{align}
\left(\left\|\rho_{0} - \bar{\rho} \right\|_{3}^{2}+\left\| \u_{0} \right\|_{3}^{2}+\left\|\nabla\Phi_{0} -\nabla\bar{\Phi} \right\|^{2}\right)  \ls \eps^{2} ,
\end{align}
then there exists some constant $\tilde{C}$, such that the asymptotic stability holds $\mathbb{P}$~{\rm a.s.}:
\begin{align}
 &\sup\limits_{s\in[0,t]}\left(\left\| \rho -\bar{\rho} \right\|_{3}^{2}+\left\| \u \right\|_{3}^{2}+\left\|\nabla\Phi -\nabla\bar{\Phi} \right\|^{2}\right)\ls 2\tilde{C} e^{-\alpha t} \eps^{2}.
 \end{align}
Actually, by Chebyshev's inequality (see Appendix \ref{append}), it holds
\begin{align}
&\mathbb{P}\left[\left\{\omega\in \Omega|  \left\| \rho -\bar{\rho} \right\|_{3}^{2}+\left\| \u \right\|_{3}^{2}+\left\|\nabla\Phi -\nabla\bar{\Phi} \right\|^{2}
>  2\tilde{C} e^{-\alpha t} \eps^{2}\right\}\right]\notag\\
\ls &\frac{\mathbb{E}\left[\left|\left\| \rho -\bar{\rho} \right\|_{3}^{2}+\left\| \u \right\|_{3}^{2}+\left\|\nabla\Phi -\nabla\bar{\Phi} \right\|^{2}\right|^{m}\right]}{\left(2\tilde{C} e^{-\alpha t} \eps^{2}\right)^{m}}\\
\ls &\frac{\mathbb{E}\left[\tilde{C}e^{-\alpha t}\left(\left\| \rho_{0} - \bar{\rho} \right\|_{3}^{2}+\left\| \u_{0} \right\|_{3}^{2}+ \left\|\nabla\Phi_{0} -\nabla\bar{\Phi} \right\|^{2}\right)^{m}\right]}{\left(2\tilde{C} e^{-\alpha t} \eps^{2}\right)^{m}}=\frac{1}{2^{m}}.\notag
\end{align}
Let $m\ra \infty$, then it holds
$$
\mathbb{P}\left[\left\{\omega\in \Omega| \left\| \rho -\bar{\rho} \right\|_{3}^{2}+\left\| \u \right\|_{3}^{2}+\left\|\nabla\Phi -\nabla\bar{\Phi} \right\|^{2}>2\tilde{C} e^{-\alpha t} \eps^{2}\right\}\right]\ra 0,
$$
i.e., \\
 $\left\| \rho -\bar{\rho} \right\|_{3}^{2}+\left\| \u \right\|_{3}^{2}+\left\|\nabla\Phi -\nabla\bar{\Phi} \right\|^{2}\ls 2\tilde{C} e^{-\alpha t} \eps^{2}$ holds $\mathbb{P}$ {\rm a.s.} for every $s\in [0, t]$.
\end{remark}

\begin{remark}
The argument in this paper implies the same existence and asymptotic stability of solutions around the steady state for the 2-D system with insulating boundary conditions. Repeating the argument, by Sobolev's embedding, the existence of perturbed solutions and asymptotic stability of steady state for 1-D system with insulating boundary conditions holds: $\rho$ and $\u$ are in $L^{2m}\left(\Omega; C\left([0,T]; H^{2}\left(U\right)\right)\right)$, $\Phi\in L^{2m}\left(\Omega; C\left([0,T]; H^{4}\left(U\right)\right)\right)$ in $\left(\Omega, \mathcal{F}, \mathbb{P}\right)$.
\end{remark}

As mentioned before, the study of stochastic Euler-Poisson equations, totally from the existing studies for the deterministic case, is new and challenging. The idea of the proof is as follows. We first prove the local existence by Banach's fixed point theorem, then we establish the uniform energy estimates in time $t$ to show the global existence of $\left(\sigma , \u , \phi  \right) $. Furthermore, we prove the weighted energy estimates so that we can obtain the asymptotic stability for steady states with the insulating boundary conditions. The {\it a priori} estimates imply the tightness of approximates measures, which will converge to an invariant measure by Krylov-Bogoliubov's theorem in a complete probability space. The global existence does not require the small perturbation condition \eqref{2 Condition for stochastic force} and \eqref{initial conditions assumption}. However, the existence of invariant measure in Theorem \ref{exists of invariant meas}, requires \eqref{2 Condition for stochastic force} and \eqref{initial conditions assumption}.
From the weighted energy estimates, we then prove that the invariant measure for \eqref{3-D Euler Poisson} is exactly the law of steady state, c.f. Section \ref{section invariant measure}. This intricate relationship has not been uncovered in the asymptotic behavior analysis of stochastic Navier-Stokes equations yet \cite{Mattingly2002,Goldys-Maslowski2005}.

Here we  explain in detail the main difficulties  we face to and the strategies we are going to propose.
\begin{enumerate}
  \item \textbf{No temporal solutions due to the stochastic term.} Since Brownian motion is at most H\"oder-$\frac{1}{2}-$ continuous with respect to $t$ and it is nowhere differentiable, we do not have $\frac{\d W}{\d t}$ or $\frac{\d \left(\rho\u\right)}{\d t}$ either. No temporal derivative is involved in the norm of solutions. Thus, in deterministic cases \cite{Guo2006StabilityOS,MeiWu2021Stability}, the spatial estimates bounded by the temporal derivatives estimates like
      \begin{align}
      \qquad \|\left(\rho -\bar{\rho}\right)\|^2+\|\nabla \left(\rho -\bar{\rho}\right)\|^2 +\|\nabla \cdot \u\|^2 \leqslant C\left(\left\|u_t\right\|^2+\left\|\left(\rho -\bar{\rho}\right)_t\right\|^2+\|u\|^2+\interleave w\interleave^3\right),
      \end{align}
      do not apply to this stochastic case, where $\interleave \cdot \interleave$ means the temporal and spatial mixed derivatives. Consequently, the different energy estimates with the spatial and temporal mixed estimates are necessary in this paper. The spatial derivative estimates is based on It\^o's formula. We also symmetrize the system compatible with the insulating boundary conditions, to control the linear term and to facilitate the {\it a priori} estimates.

       It is interesting that the noise in form of \eqref{condition for F for 3-d} is in the higher order of $\u$ than the Lipschitz continuous on $\u$. This is reasonable when we consider the small perturbation around the steady state, which is different with most cases in which Lipshcitz continuous coefficients give birth to wellposedness. In this case the influence of stochastic force does not been exaggerated so much.
\item \textbf{Weighted energy estimates on account of the estimates of the stochastic integral.} Recalling the 3-D deterministic case \cite{Guo2006StabilityOS}, for instance, based on the energy estimates, one can obtain ordinary differential inequality (ODI). Then they multiply the ODI with the exponential function of $t$ directly to facilitate the stability analysis.
    However, in this paper, in order to estimates the stochastic term, we apply the Burkholder-Davis-Gundy's inequality to the stochastic integral of the Wiener process. Then the {\it a priori} estimates \eqref{energy estimate for insulating bdy con} is already in the form of time integrals rather than an ODI. Integration with respect to time twice could not imply the  asymptotic stability. Consequently, direct acquisition of asymptotic stability becomes challenging. To overcome this obstacle, we employ the weighted energy estimates. Moreover, the weight is determined by the {\it a priori} estimates which should be obtained first, cf. Section  \ref{Weighted decay estimates}.
\end{enumerate}

This paper is organized as follows. Section 2 is dedicated to establishing the global existence of solutions around the steady state. In Section  3, we investigate the asymptotic stability of semiconductor equations. Finally, in Section  4, we demonstrate the existence and property of invariant measures. Section  5 is the Appendix, in which we provide an overview of stochastic analysis theories that are employed in this study.

\smallskip
\smallskip

\section{Global existence of solutions}

In this section, we first establish the local existence of strong solutions by Banach's fixed point theorem.
Specifically, we derive the system of perturbed solutions in matrix notation by \eqref{3-D Euler Poisson}. In Step 1, We symmetrize it to simplify the energy estimates and proceed to linearize the system. In Step 2, following a standard procedure in view of Picard interation, we establish the uniform estimates onto mapping. We utilize It\^o's formula and the Burkholder-Davis-Gundy inequality to estimate the stochastic force. In Step 3, we demonstrate contraction. In Step 4, we get the {\it a priori} estimates in \S 2.2 so as to obtain the global existence of $(\rho-\bar{\rho},\u)$, or equivalently, $\rho ,\u \in L^{2m}\left(\Omega; C\left([0,T]; H^{3}\left(U\right)\right)\right)$ in \S 2.3. Step 5 is about the proof of global existence.

In form of $\left(\sigma, \u, \phi\right)=\left(\rho-\bar{\rho}, \u, \Phi-\bar{\Phi} \right)$, the hydrodynamic system deforms into
\begin{equation}
\left\{\begin{array}{l}\label{deformed sto Euler-Poisson}
\sigma_{t}+ \nabla \cdot \left(\left(\bar{\rho}+\sigma\right)\u\right) =0, \\
 \d  \left(\u\right) + \left(\left(\u \cdot\nabla \right)\u+\u +\nabla Q\left(\bar{\rho}+\sigma\right) -\nabla Q\left(\bar{\rho}\right)\right)\d t = \nabla \phi \d t + \frac{\mathbb{F}}{\bar{\rho}+\sigma}\d  W, \\
 \triangle \phi=\sigma.
\end{array}\right.
\end{equation}
Here we view $\tau$ as a constant $1$ without loss of generality for the stability analysis.
In terms of component, by Taylor's expansion, it holds
\begin{align}
 &\nabla Q\left(\bar{\rho}+\sigma\right) -\nabla Q\left(\bar{\rho}\right)=\left(Q\left(\bar{\rho}+\sigma\right)-Q\left(\bar{\rho}\right)\right)_{,i} \notag\\
=&Q'\left(\bar{\rho}+\sigma\right)\left(\bar{\rho}+\sigma\right)_{,i}-Q'\left(\bar{\rho}\right)\bar{\rho}_{,i} \notag\\
=&Q'\left(\bar{\rho}+\sigma\right)\sigma_{,i}+\left(Q'\left(\bar{\rho}+\sigma\right)-Q'\left(\bar{\rho}\right)\right)\bar{\rho}_{,i} \\
=&Q'\left(\bar{\rho}+\sigma\right)\sigma_{,i}+Q''\left(\bar{\rho}\right)\sigma \bar{\rho}_{i}+\left(Q'\left(\bar{\rho}+\sigma\right)-Q'\left(\bar{\rho}\right)-Q''\left(\bar{\rho}\right)\sigma\right)\bar{\rho}_{,i}\notag\\
:= & Q'\left(\bar{\rho}+\sigma\right)\sigma_{,i}+Q''\left(\bar{\rho}\right)\sigma \bar{\rho}_{,i}+ h_{i},\notag
\end{align}
where $\left(\cdot\right)_{, i}$ means the derivative with respect to $x_{i}$, and
\begin{align}
h_{i}=O\left(\sigma^{2}\right).
\end{align}
In term of component, there holds
\begin{align}\phi_{,i}=\triangle^{-1}\sigma_{,i},
\end{align}
  where $\triangle^{-1}$ is well-defined under the condition \eqref{insulated boundary condition}. In matrix notation, denoting $\w=\left[\begin{array}{l}\sigma \\ \u\end{array}\right]$, we write the system as
\begin{align}\label{sto Euler-Poisson system}
\d  \w+\left(\mathcal{A}^{1}\w_{,1}+\mathcal{A}^{2}\w_{,2}+\mathcal{A}^{3}\w_{,3}+\mathcal{B}\w+\mathcal{L}_{\u}\right)\d t= \mathcal{L}_{\mathcal{\phi}}\d t+ f,
\end{align}
where
\begin{align}
\mathcal{A}^{1}=\left[\begin{array}{cccc}
u^1 & \bar{\rho}+\sigma & 0 & 0 \\
Q'\left(\bar{\rho}+\sigma\right) & u^1 & 0 & 0 \\
0 & 0 & u^1 & 0 \\
0 & 0 & 0 & u^1
\end{array}\right],
\end{align}
\begin{align}
\mathcal{A}^{2}=\left[\begin{array}{cccc}
u^2  & 0 & \bar{\rho}+\sigma  & 0 \\
0 & u^2 & 0 & 0 \\
Q'\left(\bar{\rho}+\sigma\right) & 0 & u^2 & 0 \\
0 & 0 & 0 & u^2
\end{array}\right],
\end{align}
\begin{align}
\mathcal{A}^{3}=\left[\begin{array}{cccc}
u^3  & 0  & 0 & \bar{\rho}+\sigma \\
0 & u^3 & 0 & 0 \\
0 & 0 & u^3 & 0 \\
Q'\left(\bar{\rho}+\sigma\right) & 0 & 0 & u^3
\end{array}\right],
\end{align}
\begin{align}
\mathcal{B}=\left[\begin{array}{cccc}
0 & \bar{\rho}_{,1} & \bar{\rho}_{,2} & \bar{\rho}_{,3} \\
Q''\left(\bar{\rho}\right) \bar{\rho}_{,1} & 0 & 0 & 0 \\
Q''\left(\bar{\rho}\right) \bar{\rho}_{,2} & 0 & 0 & 0 \\
Q''\left(\bar{\rho}\right) \bar{\rho}_{,3} & 0 & 0 & 0
\end{array}\right],
\end{align}
\begin{align}
\mathcal{L}_{\u}=\left[\begin{array}{c}0\\ u^{1}\\ u^2\\ u^3\end{array}\right],\quad \mathcal{L}_{\mathcal{\phi}}=\left[\begin{array}{c}0 \\ \triangle^{-1}\sigma_{,1}\\ \triangle^{-1}\sigma_{,2}\\  \triangle^{-1}\sigma_{,3}\end{array}\right],
\quad f=-\left[\begin{array}{c}
0\\ h\left(\sigma\right)_{,1}-\mathbb{F}^{1}\d  W \\ h\left(\sigma\right)_{,2}-\mathbb{F}^{2}\d  W\\ h\left(\sigma\right)_{,3}-\mathbb{F}^{3}\d  W
\end{array}\right].
\end{align}
\smallskip
\begin{flushleft}
\textbf{Step 1: Symmetrizing and Linearizing.}
\end{flushleft}
We define the symmetrizer $\mathcal{D}=\operatorname{diag}\left[Q'\left(\bar{\rho}+\sigma\right),\bar{\rho}+\sigma,\bar{\rho}+\sigma,\bar{\rho}+\sigma\right] := \operatorname{diag}\left[d_{1}, d_{2}, d_{3}, d_{4} \right]$. Then the system deforms into
\begin{align}\label{symmetrized system}
\mathcal{D} \d  \w+\left(\tA^{1}\w_{,1}+\tA^{2}\w_{,2}+\tA^{3}\w_{,3}+\B\w+\tilde{\mathcal{L}}_{\u}\right)\d t= \tilde{\mathcal{L}}_{\phi}\d t+ \tilde{f},
\end{align}
where
\begin{align}
\tA^{1}=\left[\begin{array}{cccc}
u^1 Q'\left(\bar{\rho}+\sigma\right) & \left(\bar{\rho}+\sigma\right) Q'\left(\bar{\rho}+\sigma\right) & 0 & 0 \\
\left(\bar{\rho}+\sigma\right) Q'\left(\bar{\rho}+\sigma\right) & \left(\bar{\rho}+\sigma\right)u^1 & 0 & 0 \\
0 & 0 & \left(\bar{\rho}+\sigma\right)u^1 & 0 \\
0 & 0 & 0 & \left(\bar{\rho}+\sigma\right) u^1
\end{array}\right],
\end{align}
\begin{align}
\tA^{2}=\left[\begin{array}{cccc}
u^2 Q'\left(\bar{\rho}+\sigma\right)  & 0 & \left(\bar{\rho}+\sigma\right) Q'\left(\bar{\rho}+\sigma\right)  & 0 \\
0 & \left(\bar{\rho}+\sigma\right) u^2 & 0 & 0 \\
 \left(\bar{\rho}+\sigma\right) Q'\left(\bar{\rho}+\sigma\right) & 0 & \left(\bar{\rho}+\sigma\right) u^2 & 0 \\
0 & 0 & 0 & \left(\bar{\rho}+\sigma\right) u^2
\end{array}\right],
\end{align}
\begin{align}
\tA^{3}=\left[\begin{array}{cccc}
u^3 Q'\left(\bar{\rho}+\sigma\right)  & 0  & 0 &  \left(\bar{\rho}+\sigma\right) Q'\left(\bar{\rho}+\sigma\right) \\
0 &\left(\bar{\rho}+\sigma\right) u^3 & 0 & 0 \\
0 & 0 & \left(\bar{\rho}+\sigma\right)u^3 & 0 \\
 \left(\bar{\rho}+\sigma\right) Q'\left(\bar{\rho}+\sigma\right) & 0 & 0 & \left(\bar{\rho}+\sigma\right)u^3
\end{array}\right],
\end{align}
\begin{align}
\B=\left[\begin{array}{cccc}
0 & \bar{\rho}_{,1} & \bar{\rho}_{,2} & \bar{\rho}_{,3} \\
Q''\left(\bar{\rho}\right) \bar{\rho}_{,1} & 0 & 0 & 0 \\
Q''\left(\bar{\rho}\right) \bar{\rho}_{,2} & 0 & 0 & 0 \\
Q''\left(\bar{\rho}\right) \bar{\rho}_{,3} & 0 & 0 & 0
\end{array}\right],
\end{align}
\begin{align}\label{Lu and Lphi}
\tilde{\mathcal{L}}_{\u}=\left(\bar{\rho}+\sigma\right)\left[\begin{array}{c}0\\ u^{1}\\ u^2\\ u^3\end{array}\right],\quad \tilde{\mathcal{L}}_{\phi}=\left(\bar{\rho}+\sigma\right)\left[\begin{array}{c}0 \\ \triangle^{-1}\sigma_{,1}\\ \triangle^{-1}\sigma_{,2}\\  \triangle^{-1}\sigma_{,3}\end{array}\right],
\end{align}
\begin{align}
\tilde{f}=-\left(\bar{\rho}+\sigma\right)\left[\begin{array}{c}
0\\ h\left(\sigma\right)_{,1}\d t-\mathbb{F}^{1}\d  W \\ h\left(\sigma\right)_{,2}\d t-\mathbb{F}^{2}\d  W\\ h\left(\sigma\right)_{,3}\d t-\mathbb{F}^{3}\d  W
\end{array}\right].
\end{align}

\subsection{Local existence}\label{local existence for 3-D insul}
In this subsection, the main estimates for the stochastic forces are taking the assumption of \eqref{2 Condition for stochastic force} for instance.
Similar to the approach in \cite{Kawashima2003LargeTimeBO,Nishibata2009AsymptoticSO}, we first linearize the system and then we use Banach's fixed point theorem to get the local existence by the {\it a priori} energy estimates.

The linearized system is
\begin{align}
\label{linearized system}
&\mathcal{D}(\hat{\sigma}) \d  \w+\left(\tA^{1}\left(\hat{\w}\right)\w_{,1}+\tA^{2}(\hat{\w})\w_{,2}+\tA^{3}(\hat{\w})\w_{,3}+\B\w\right)\d t \\
 = &-\tilde{\mathcal{L}}_{\hat{\u}}\left(\hat{\sigma}, \u\right)\d t+\tilde{\mathcal{L}}_{\hat{\phi}}\left(\hat{\sigma}, \hat{\phi}\right)\d t+ \tilde{f}(\hat{\w}),\notag
\end{align}
where $\hat{\w}=\left[\begin{array}{l}\hat{\sigma} \\ \hat{\u}\end{array}\right]$ is given, $\hat{\sigma} \in C\left([0,T]; H^{3}\left(U\right)\right)$, $\hat{\u} \in C\left([0,T]; H^{3}\left(U\right)\right)$. We denote $M=\sup\limits_{t\in[0,T]} \left\|\hat{\sigma}, \hat{\u}\right\|_{3}$.

\begin{flushleft}
\textbf{Step 2: Estimates for the uniform upper bound.}
\end{flushleft}

By It\^o's formula (see Appendix \ref{append}), it holds
\begin{align}
\int_{U}\d \left(  \frac{1}{2} \mathcal{D} \w \cdot \w \right) \d x = \int_{U} \frac{1}{2} \d   \mathcal{D} \w \cdot \w \d x + \int_{U} \mathcal{D} \w\cdot \d  \w \d x+ \int_{U}  \mathcal{D}\mathbb{F}\cdot\mathbb{F} \d x \d t.
\end{align}
We integrate
\begin{align}
&\mathcal{D} \d  \w\cdot \w+\left(\tA^{1}\left(\hat{\w}\right)\w_{,1} \cdot \w + \tA^{2}\left(\hat{\w}\right)\w_{,2}\cdot \w+\tA^{3}\left(\hat{\w}\right)\w_{,3}\cdot \w+ \B \w\cdot \w  \right)\d t\\
=& -\tilde{\mathcal{L}}_{\hat{\u}}\cdot \w\d t+\tilde{\mathcal{L}}_{\hat{\phi}}\cdot \w\d t+ \tilde{f}\cdot \w\notag
\end{align}
over the domain $U$, we gain
\begin{align}
 & \int_{U} \mathcal{D} \w\cdot \d  \w \d x \notag\\
=& \int_{U}\left(-\left(\tA^{1}\w_{,1}\cdot \w + \tA^{2}\w_{,2}\cdot \w+ \tA^{3}\w_{,3}\cdot \w + \B \w\cdot \w\right) - \tilde{\mathcal{L}}_{\hat{\u}}\cdot \w + \tilde{\mathcal{L}}_{\hat{\phi}}\cdot \w  \right) \d x \d t \\
 & +\int_{U} \nabla h\left(\hat{\sigma}\right)\cdot \w \d x \d t + \int_{U}\mathcal{D} \mathbb{F}\d  W\cdot \w\d x . \notag
\end{align}
By the integration by parts, we have
\begin{align}\label{1 energy est in existence to ls}
 &\int_{U} \left( \tA^{1}\w_{,1}\cdot \w + \tA^{2}\w_{,2}\cdot \w+ \tA^{3}\w_{,3}\cdot \w + \B \w\cdot \w \right) \d x \d t \notag\\
=&\int_{U} \left(-\frac{1}{2}\left(\w \tA^{1}_{1}\w +\w \tA^{2}_{2}\w + \w \tA^{3}_{3}\w \right)+ \B \left|\w\right|^{2}\right) \d x \d t + \int_{U} \left(\w \tA^{j}\w \right)_{,j} \d x \d t.
\end{align}
On account of the insulated boundary condition $\hat{\u}\cdot \nu|_{\partial U} =0$, it holds
\begin{align}\label{2 energy est in existence to ls}
&\int_{U} \left(\w \tA^{j}\w \right)_{,j} \d x\\
 = &\int_{\partial U} \left(\left(\hat{\u}\cdot \nu\right)\left(Q'\left(\bar{\rho}+\hat{\sigma}\right)\sigma^{2}+\left(\bar{\rho}+\hat{\sigma}\right)\left|\u\right|^{2}+2\hat{\rho} \u Q'\left(\bar{\rho}+\hat{\sigma}\right)\left(\bar{\rho}+\hat{\sigma}\right)\right)\right)\d S \equiv 0. \notag
\end{align}
In summary, there holds
\begin{align}
&\int_{U}\d \left( \frac{1}{2} \mathcal{D} \w \cdot \w \right) \d x \notag\\
= &\int_{U} \frac{1}{2} \d \mathcal{D} \w \cdot \w \d x -\int_{U} \left(-\frac{1}{2}\left(\w \tA^{1}_{1}\w +\w \tA^{2}_{2}\w + \w \tA^{3}_{3}\w \right)+ \B \left|\w\right|^{2}\right) \d x \d t \\
&+ \int_{U}\left(-\tilde{\mathcal{L}}_{\hat{\u}}\cdot \w + \tilde{\mathcal{L}}_{\hat{\phi}}\cdot \w  \right) \d x \d t+\int_{U} \nabla h\left(\hat{\sigma}\right)\cdot \w \d x \d t + \int_{U}\mathcal{D} \mathbb{F}\d  W\cdot \w\d x \notag \\
 &  + \int_{U}  \mathcal{D}\mathbb{F}\cdot\mathbb{F} \d x \d t .\notag
\end{align}
 Direct calculation shows that
\begin{align}
&-\frac{1}{2}\tilde{\mathcal{A}}^{i}_{,i}+\B
-\operatorname{diag}\left[-\frac{1}{2}\left(u^{i}Q'\left(\rho\right)\right)_{,i},-\frac{1}{2}\left(u^{i}\rho\right)_{,i},
-\frac{1}{2}\left(u^{i}\rho\right)_{,i},-\frac{1}{2}\left(u^{i}\rho\right)_{,i}\right]\notag\\
=& \left[\begin{array}{cccc}
0 & \footnotesize{-\frac{1}{2}\left\{\rho q\right\}_{,1}+\bar{\rho}_{,1} q} & -\frac{1}{2}\left\{\rho q\right\}_{,2}+\bar{\rho}_{,2} q & -\frac{1}{2}\left\{\rho q\right\}_{,3}+\bar{\rho}_{,3} q\\
-\frac{1}{2}\left\{\rho q\right\}_{,1}+\rho Q''\left(\bar{\rho}\right) \bar{\rho} & 0 & 0 & 0 \\
-\frac{1}{2}\left\{\rho q\right\}_{,2}+\rho Q''\left(\bar{\rho}\right) \bar{\rho} & 0 & 0 & 0 \\
-\frac{1}{2}\left\{\rho q\right\}_{,3}+\rho Q''\left(\bar{\rho}\right) \bar{\rho} & 0 & 0 & 0
\end{array}\right]
 \end{align}
 is anti-symmetric \cite{Guo2006StabilityOS}, where $q=Q'\left(\rho\right)$. Then we estimate
\begin{align}\label{3 energy est in existence to ls}
\int_{U} \left(-\frac{1}{2}\left(\w \tA^{1}_{1}\w +\w \tA^{2}_{2}\w + \w \tA^{3}_{3}\w \right)+ \B \left|\w\right|^{2}\right) \d x \d t \ls C \left\| \w \right\|^{2} \left(\left\| \hat{\sigma} \right\|_{3}+\left\| \hat{\u} \right\|_{3}\right)\d t.
\end{align}
Recalling \eqref{Lu and Lphi}, we have
\begin{align}
\int_{U} \tilde{\mathcal{L}}_{\hat{\u}}\left(\hat{\sigma}, \u\right)\cdot \w \d x \d t= \int_{U} \left(\bar{\rho}+\hat{\sigma}\right)\left|\u\right|^{2}\d x \d t \geqslant C\int_{U} \bar{\rho}\left|\u\right|^{2}\d x \d t,
\end{align}
and
\begin{align}
\int_{U} \tilde{\mathcal{L}}_{\phi}\left(\hat{\sigma}, \phi\right)\cdot \w \d x \d t= & \int_{U} \left(\bar{\rho}+\hat{\sigma}\right)\nabla \phi \cdot \u \d x \d t = -\int_{U}\nabla \cdot \left(\left(\bar{\rho}+\hat{\sigma}\right)\u\right)\phi \d x \d t  \\
=& \int_{U}\sigma_{t}\phi \d x \d t = \int_{U}\left(\triangle \phi \right)_{t}\phi \d x \d t=-\d  \int_{U}\left|\nabla\phi\right|^{2} \d x. \notag
\end{align}
For $\tilde{f}$, there holds
\begin{align}
\int_{0}^{t}\int_{U} \tilde{f}\cdot \w \d x = & C \int_{0}^{t}\int_{U} \hat{\sigma}^{2}\cdot\u \d x \d t + \left|\int_{0}^{t}\int_{U}\left(\bar{\rho}+\hat{\sigma}\right)\mathbb{F}\cdot\u \d x \d  W\right|,
\end{align}
where
\begin{align}
\int_{0}^{t}\int_{U} \hat{\sigma}^{2}\cdot\u \d x \d s \ls  \int_{0}^{t}\left\| \hat{\sigma} \right\|_{2} \left\|\hat{\sigma}\right\| \left\|\w\right\| \d s.
\end{align}
One can see the definition of stochastic integral $\int_{0}^{t}\int_{U}\left(\bar{\rho}+\hat{\sigma}\right)\mathbb{F}\cdot\u \d x \d  W$ in Appendix \ref{append}.
Since $\left|\mathbb{F}\left(\hat{\sigma}, \hat{\u}\right)\right|^{2}\ls C \left|\left(\bar{\rho}+\hat{\sigma}\right)\hat{\u}\right|^{4}$, there hold
\begin{align}
 \int_{U} \mathcal{D}\mathbb{F}\cdot\mathbb{F}\d x \d t \ls C \left\|\hat{\u}\right\|^{2}\left(\left\|\hat{\u}\right\|_{3}^{2}\left\|\hat{\sigma}\right\|_{3}^{4}\right)\d t \ls CM^{8}\d t ,
 \end{align}
 and
\begin{align}\label{estimate for sto integral in local existence}
    &\mathbb{E}\left[\left|\int_{0}^{t} \int_{U} \mathbb{F}\cdot\u\d x \d  W \right|^{m}\right]
\ls \mathbb{E}\left[\left(\int_{0}^{t} \left|\int_{U} \mathbb{F}\cdot\u\d x\right|^{2}\d s\right)^{\frac{m}{2}}\right]\notag\\
\ls &\mathbb{E}\left[\left(C\int_{0}^{t} \left|\int_{U} \left|\left(\bar{\rho}+\hat{\sigma}\right)\hat{\u}\right|^{2}\u\d x\right|^{2}\d s\right)^{\frac{m}{2}}\right]\\
\ls &\mathbb{E}\left[\left(C \sup\limits_{s\in[0, t]}\left\|\u \right\|^{2} \int_{0}^{t} \left\| \left(\bar{\rho}+\hat{\sigma}\right)\right\|_{3}^{4} \left\| \hat{\u} \right\|_{3}^{4} \d s\right)^{\frac{m}{2}}\right]\notag\\
\ls &\delta_{1}^{m}\mathbb{E}\left[\left(\sup\limits_{s\in[0, t]} \left\|\u\right\|^{2}\right)^{m}\right] +C_{\delta_{1}}^{m}\mathbb{E}\left[\left(\int_{0}^{t} \left\| \hat{\u} \right\|_{3}^{4}\left\| \left(\bar{\rho}+\hat{\sigma}\right)\right\|_{3}^{4} \d s\right)^{m}\right], \notag
\end{align}
by Burkholder-Davis-Gundy's inequality (see Appendix \ref{append}), where $\delta_{1}$ is taken such that $\delta_{1} \sup\limits_{s\in [0,t]} \left\|\u \right\|^{2}$ can be balanced by the left side. We estimate
\begin{align}
 &\int_{U} \w \left(\d \mathcal{D}\right)\w\d x \notag\\
=& \int_{U}\w\left(\operatorname{diag}\left\{Q'\left(\bar{\rho}+\hat{\sigma}\right)_{t},\left(\bar{\rho}+\hat{\sigma}\right)_{t} ,\left(\bar{\rho}+\hat{\sigma}\right)_{t},\left(\bar{\rho}+\hat{\sigma}\right)_{t}\right\}\right)\w \d x \d t\notag\\
=& \int_{U} \left(Q''\left(\bar{\rho}+\hat{\sigma}\right)\hat{\sigma}_{t}\sigma^{2}+\hat{\sigma}_{t}\left|\u\right|^{2}\right)\d x \d t\\
=&  \int_{U} \left(Q''\left(\bar{\rho}\right)+O\left(\hat{\sigma}\right)\right)\left(-\nabla\cdot\left(\left(\bar{\rho}+\hat{\sigma}\right)\hat{\u}\right)\right)\sigma^{2}\d x + \int_{U} \left(-\nabla\cdot\left(\left(\bar{\rho}+\hat{\sigma}\right)\hat{\u}\right)\right)\left|\u\right|^{2}\d x \d t\notag \\
\ls & C \left\|\w\right\|^{2}\left(\left\|\hat{\u}\right\|_{2} +\left\|\hat{\sigma}\right\|_{2}\left\|\hat{\u}\right\|_{2} +\left\|\hat{\sigma} \right\|_{2} \left\|\hat{\sigma}\right\|_{3}\left\|\hat{\u}\right\|_{2} \right)\d t, \notag
\end{align}
where $O$ means the same order.
In summary, there holds
\begin{align}
   & \mathbb{E}\left[\left( \sup\limits_{s\in[0,t]} \int_{0}^{s} \d \left(\int_{U}\bar{\rho}\left|\w\right|^{2}\d x + \int_{U}\left|\nabla\phi\right|^{2}\d x\right) +c_{1}\int_{0}^{t}\int_{U}\bar{\rho} \left|\u\right|^{2}\d x \d s \right)^{m}\right] \notag\\
\ls &  \mathbb{E}\left[\left( C\int_{0}^{t}\left( \left\|\hat{\u}\right\|^{2}\left(1+\left\|\hat{\sigma}\right\|_{H^{1}}^{2}\right)+\left\| \w \right\|^{2}\left\| \hat{\w} \right\|+\left\|\hat{\w}\right\|^{2}\left\|\w\right\|\right)\d s \right)^{m}\right]\notag\\
 &+ \mathbb{E}\left[\left(C\int_{0}^{t} \interleave \hat{\u} \interleave^{4}\interleave \left(\bar{\rho}+\hat{\sigma}\right)\interleave^{4} \d s\right)^{m}\right] \\
 &+\mathbb{E}\left[\left(C\int_{0}^{t}\left\|\w\right\|^{2}\left(\left\|\hat{\u}\right\|_{2} +\left\|\hat{\sigma}\right\|_{2}\left\|\hat{\u}\right\|_{2} +\left\|\hat{\sigma} \right\|_{2} \left\|\hat{\sigma}\right\|_{3}\left\|\hat{\u}\right\|_{2}\right) \d s\right)^{m}\right]\notag\\
\ls & \mathbb{E}\left[\left(C  \int_{0}^{t}\left(M+M^{2}+M^{4}\right)\d s + \int_{0}^{t} M \left\| \w \right\|^{2} \d s + \int_{0}^{t} M^{2}\left\|\w\right\| \d s\right)^{m}\right]\notag\\
&+ \mathbb{E}\left[\left(C \int_{0}^{t}(M^{4}+ M^{8}) \d s\right)^{m}\right]+C\mathbb{E}\left[\left(\int_{0}^{t}\left\|\w\right\|^{2}\left(M +M^{2}+M^{3}\right)\d s\right)^{m}\right]. \notag
\end{align}
Furthermore, for $\bar{\rho}$ with a positive lower bound, we have
\begin{align}
   & \mathbb{E}\left[\left(\sup\limits_{s\in[0,t]}\int_{0}^{s} \d \left(\int_{U}\left|\w\right|^{2}\d x + \int_{U}\left|\nabla\phi\right|^{2}\d x\right) \right)^{m}\right] \ls
 C_{M,m}\left(t^{m}+\mathbb{E}\left[\left(\int_{0}^{t} \left\|\w\right\|^{2}\d s \right)^{m}\right]\right),
\end{align}
where $C_{M,m}$ is a constant depending on $m, M$.
Similarly, we take higher-order derivatives to the system \eqref{linearized system} up to third order, and we do the {\it a priori} estimates. There holds
\begin{align}
   & \mathbb{E}\left[\left(\sup\limits_{s\in[0,t]}\int_{0}^{s} \d \left(\left\|\w\right\|_{3}^{2} + \left\|\nabla \phi\right\|^{2} \right) \right)^{m}\right]\\
  \ls &C_{M,m}\left(t^{m}+\mathbb{E}\left[\left(\int_{0}^{t} \left(\left\|\w\right\|_{3}^{2} + \left\|\nabla \phi\right\|^{2} \right) \d s \right)^{m}\right]\right).\notag
\end{align}
By Gr\"onwall's inequality, we have $\w\in L^{2m}\left(\Omega; C\left([0,T];H^{3}\left(U\right)\right)\right)$. More precisely,
\begin{align}\label{local estimates}
&\mathbb{E}\left[\left(\sup\limits_{s\in[0,t]}\left(\left\|\w\right\|_{3}^{2} + \left\|\nabla \phi\right\|^{2} \right)(s) \right)^{m}\right]\notag\\
\ls &\mathbb{E}\left[\left(\left(\left\|\w\right\|_{3}^{2} + \left\|\nabla \phi\right\|^{2} \right) (0)\right)^{m}\right]+C_{M,m}t^{m} \\
&+ \int_{0}^{t}\left(\mathbb{E}\left[\left(\left(\left\|\w\right\|_{3}^{2} + \left\|\nabla \phi\right\|^{2} \right) (0) \right)^{m}\right]+C_{M,m}t^{m}\right)C_{M,m}e^{\int_{0}^{s}C_{M,m}\d \tau}\d s\notag\\
\ls & \left(\mathbb{E}\left[\left(\left(\left\|\w_{0}\right\|_{3}^{2} + \left\|\nabla \phi_{0}\right\|^{2} \right) \right)^{m}\right]+C_{M,m}t^{m}\right) e^{C_{M,m}t}.\notag
\end{align}
  From the estimates of time shift of solutions similar as \eqref{local estimates}, by applying Kolmogorov-Centov's theorem (see Appendix \ref{append}), following the standard argument in stochastic analysis \cite{BreitFeireislHofmanova-book2018}, we deduce the time continuity of $\w$ up to a modification in probability space $\left(\Omega, \mathcal{F}, \mathbb{P}\right)$, and we omit the details.

The iteration scheme is
\begin{align}\label{iteration system}
&\mathcal{D}\left(\sigma_{n-1}\right) \d  \w_{n} + \left(\tA^{1}\left(\w_{n-1}\right)\w_{n,1} + \tA^{2}\left(\w_{n-1}\right)\w_{n,2}+\tA^{3}\left(\w_{n-1}\right)\w_{n,3}+ \B\w_{n}\right)\d t  \\
 = &-L_{\u_{n-1}}\left(\sigma_{n-1}, \u_{n}\right)\d t + \tilde{\mathcal{L}}_{\phi}\left(\sigma_{n-1}, \phi_{n}\right)\d t+ \tilde{f}\left(\w_{n-1}\right). \notag
\end{align}
By energy estimates \eqref{local estimates}, we take $T_{0}$ such that
\begin{align}
e^{C_{M,m}T_{0}}\ls 2, \quad C_{M,m}T_{0} \ls \mathbb{E}\left[\left(\left(\left\|\w_{0}\right\|_{3}^{2} + \left\|\nabla \phi_{0}\right\|^{2} \right) \right)^{m}\right],
\end{align}
if
\begin{align}
\mathbb{E}\left[\left(\sup\limits_{s\in[0,t]} \left\| \w_{n-1}(s)\right\|_{3}^{2}\right)^{m}\right] \ls 4\mathbb{E}\left[\left(\left(\left\|\w_{0}\right\|_{3}^{2} + \left\|\nabla \phi_{0}\right\|^{2} \right) \right)^{m}\right] ,
\end{align}
 then
 \begin{align}
 \mathbb{E}\left[\left(\sup\limits_{s\in[0,t]}\left\| \w_{n}(s)\right\|_{3}^{2}\right)^{m}\right] \ls 4\mathbb{E}\left[\left(\left(\left\|\w_{0}\right\|_{3}^{2} + \left\|\nabla \phi_{0}\right\|^{2} \right) \right)^{m}\right] .
 \end{align}

 \begin{remark}
 For general stochastic forces without the condition \eqref{2 Condition for stochastic force}, there also holds
\begin{align}
   & \mathbb{E}\left[\left(\sup\limits_{s\in[0,t]}\int_{0}^{s} \d \left(\left\|\w\right\|_{3}^{2} + \left\|\nabla \phi\right\|^{2} \right) \right)^{m}\right]\\
  \ls &C_{M,m}\left(t^{m}+\mathbb{E}\left[\left(\int_{0}^{t} \left(\left\|\w\right\|_{3}^{2} + \left\|\nabla \phi\right\|^{2} \right) \d s \right)^{m}\right]\right).\notag
\end{align}
with another expression of the constant $C_{M,m}$.
Thus, we get the uniform bound by Gr\"onwall's inequality similarly to the above statement.
 \end{remark}
\begin{flushleft}
\textbf{Step 3: Contraction.}
\end{flushleft}

For $\left\|\w_{n}-\w_{n-1}\right\|_{3} $, we show that it is a Cauchy sequence. $\left(\w_{n}-\w_{n-1}\right)$ satisfies
\begin{align}\label{contraction system}
&\mathcal{D}\left(\sigma_{n-1}\right) \left(\d  \w_{n}-\d  \w_{n-1}\right) + \left(\mathcal{D}\left(\sigma_{n-1}\right)-\mathcal{D}\left(\sigma_{n-2}\right)\right)\d  \w_{n-1}\notag\\
 &+ \tA^{1}\left(\w_{n-1}\right)\left(\w_{n,1}-\w_{n-1,1}\right)\d t+\left(\tA^{1}\left(\w_{n-1}\right)-\tA^{1}\left(\w_{n-2}\right)\right)\w_{n-1,1}\d t\notag\\
&+ \tA^{2}\left(\w_{n-1}\right)\left(\w_{n,2}-\w_{n-1,2}\right)\d t+\left(\tA^{2}\left(\w_{n-1}\right)-\tA^{2}\left(\w_{n-2}\right)\right)\w_{n-1,2}\d t\notag\\
&+\tA^{3}\left(\w_{n-1}\right)\left(\w_{n,3}-\w_{n-1,3}\right)\d t+\left(\tA^{3}\left(\w_{n-1}\right)-\tA^{3}\left(\w_{n-2}\right)\right)\w_{n-1,3}\d t\\
&+\B\left(\bar{\w}\right) \left(\w_{n}-\w_{n-1}\right) \d t \notag\\
 =& -\tilde{\mathcal{L}}_{\u}\left(\sigma_{n-1}, \u_{n}\right)\d t+\tilde{\mathcal{L}}_{\u}\left(\sigma_{n-2}, \u_{n-1}\right)\d t + \left(\tilde{\mathcal{L}}_{\phi}\left(\sigma_{n-1}, \phi_{n}\right)-\tilde{\mathcal{L}}_{\phi}\left(\sigma_{n-2}, \phi_{n-1}\right)\right) \d t \notag\\
 &+ \left(\tilde{f}\left(\w_{n-1}\right)-\tilde{f}\left(\w_{n-2}\right)\right). \notag
\end{align}
Then we multiply the above formula with $ \left(\w_{n}-\w_{n-1}\right)$, the estimates of some terms
\begin{align}
&\tA^{1}\left(\w_{n-1}\right)\left(\w_{n,1}-\w_{n-1,1}\right)\d t+ \tA^{2}\left(\w_{n-1}\right)\left(\w_{n,2}-\w_{n-1,2}\right)\d t\\
&+\tA^{3}\left(\w_{n-1}\right)\left(\w_{n,3}-\w_{n-1,3}\right)\d t+\B\left(\bar{\w}\right) \left(\w_{n}-\w_{n-1}\right)\d t\notag
\end{align}
are similar to \eqref{1 energy est in existence to ls}, \eqref{2 energy est in existence to ls} and \eqref{3 energy est in existence to ls}, we omit it here. We focus on the estimates of
\begin{align}
&\sum \left(\tA^{i}\left(\w_{n-1}\right)-\tA^{i}\left(\w_{n-2}\right)\right)\w_{n-1,i}\d t,
\end{align}
and the right-hand side terms in \eqref{contraction system}.
By the expression formula of $\tA^{i}$, it holds
\begin{align}
&\int_{0}^{1} \sum \left(\tA^{i}\left(\w_{n-1}\right)-\tA^{i}\left(\w_{n-2}\right)\right)\w_{n-1,i}\cdot \left(\w_{n}-\w_{n-1}\right)\d x \d t \\
\ls &C \left\|\w_{n}-\w_{n-1}\right\|\left\|\w_{n-1}-\w_{n-2}\right\|\d t. \notag
\end{align}
Since
\begin{align}
 &-\tilde{\mathcal{L}}_{\u}\left(\sigma_{n-1}, \u_{n}\right) \d t + \tilde{\mathcal{L}}_{\u}\left(\sigma_{n-2}, \u_{n-1}\right) \d t\\
=&\left( \tilde{\mathcal{L}}_{\u}\left(\sigma_{n-1}, \u_{n-1}\right)-\tilde{\mathcal{L}}_{\u}\left(\sigma_{n-1}, \u_{n}\right)\right) \d t+ \left(\tilde{\mathcal{L}}_{\u}\left(\sigma_{n-2}, \u_{n-1}\right) -\tilde{\mathcal{L}}_{\u}\left(\sigma_{n-1}, \u_{n-1}\right) \right) \d t,\notag
\end{align}
we estimate
\begin{align}
&\int_{0}^{1}\left(-\tilde{\mathcal{L}}_{\u}\left(\sigma_{n-1}, \u_{n}\right) \d t + \tilde{\mathcal{L}}_{\u}\left(\sigma_{n-2}, \u_{n-1}\right)\right)\cdot \left(\w_{n}-\w_{n-1}\right)\d x \d t\notag\\
=&-\int_{0}^{1}\left(\bar{\rho}+\sigma_{n-1}\right)\left|\u_{n}-\u_{n-1}\right|^{2}\d x\d t -\int_{0}^{1}\left(\sigma_{n-1}-\sigma_{n-2}\right)\u_{n-1}\cdot\left(\u_{n}-\u_{n-1}\right)\d x \d t\\
\ls &-\int_{0}^{1}\frac{\bar{\rho}}{2}\left|\u_{n}-\u_{n-1}\right|^{2}\d x\d t-\int_{0}^{1}\left(\sigma_{n-1}-\sigma_{n-2}\right) \u_{n-1}\cdot\left(\u_{n}-\u_{n-1}\right) \d x \d t, \notag
\end{align}
where
\begin{align}
\int_{0}^{1}\left(\sigma_{n-1}-\sigma_{n-2}\right) \u_{n-1}\left(\u_{n}-\u_{n-1}\right) \d x \d t \ls C \left\|\w_{n}-\w_{n-1}\right\| \left\|\w_{n-1}-\w_{n-2}\right\|\d t.
\end{align}
Since
\begin{align}
&\left(\tilde{\mathcal{L}}_{\phi}\left(\sigma_{n-1}, \phi_{n}\right)-\tilde{\mathcal{L}}_{\phi}\left(\sigma_{n-2}, \phi_{n-1}\right)\right) \d t\notag\\
=&\left(\tilde{\mathcal{L}}_{\phi}\left(\sigma_{n-1}, \phi_{n}\right)-\tilde{\mathcal{L}}_{\phi}\left(\sigma_{n-1}, \phi_{n-1}\right)\right) \d t+\left(\tilde{\mathcal{L}}_{\phi}\left(\sigma_{n-1}, \phi_{n-1}\right)-\tilde{\mathcal{L}}_{\phi}\left(\sigma_{n-2}, \phi_{n-1}\right)\right) \d t,
\end{align}
then we have
\begin{align}
&\int_{0}^{1}\left(\tilde{\mathcal{L}}_{\phi}\left(\sigma_{n-1}, \phi_{n}\right)-\tilde{\mathcal{L}}_{\phi}\left(\sigma_{n-2}, \phi_{n-1}\right)\right) \cdot\left(\w_{n}-\w_{n-1}\right)\d x \d t \notag\\
=&\int_{0}^{1}\left(\tilde{\mathcal{L}}_{\phi}\left(\sigma_{n-1}, \phi_{n}\right) - \tilde{\mathcal{L}}_{\phi}\left(\sigma_{n-1}, \phi_{n-1}\right)\right)\cdot\left(\w_{n}-\w_{n-1}\right)\d x  \d t\\
&+\int_{0}^{1}\left(\tilde{\mathcal{L}}_{\phi}\left(\sigma_{n-1}, \phi_{n-1}\right)-\tilde{\mathcal{L}}_{\phi}\left(\sigma_{n-2}, \phi_{n-1}\right)\right)\cdot\left(\w_{n}-\w_{n-1}\right)\d x  \d t.\notag
\end{align}
By the continuity equation, there holds
\begin{align}
 & \int_{U} \tilde{\mathcal{L}}_{\phi}\left(\sigma_{n-1}, \phi_{n}-\phi_{n-1}\right)\cdot \left(\w_{n}-\w_{n-1}\right) \d x \d t \notag\\
=& \int_{U} \left(\bar{\rho}+\sigma_{n-1}\right)\nabla \left(\phi_{n}-\phi_{n-1}\right) \cdot \left(\u_{n}-\u_{n-1}\right) \d x \d t \notag \\
=& -\int_{U}\nabla \cdot \left(\left(\bar{\rho}+\sigma_{n-1}\right)\left(\u_{n}-\u_{n-1}\right)\right)\left(\phi_{n}-\phi_{n-1}\right) \d x \d t  \notag\\
=& \int_{U}\left(\sigma_{n}-\sigma_{n-1}\right)_{t}\left(\phi_{n}-\phi_{n-1}\right) \d x \d t+ \int_{U}\nabla \cdot \left(\left(\sigma_{n-1}-\sigma_{n-2}\right)\u_{n-1}\right)\left(\phi_{n}-\phi_{n-1}\right) \d x \d t \\
=& \int_{U}\left(\triangle \left(\phi_{n}-\phi_{n-1}\right) \right)_{t}\left(\phi_{n}-\phi_{n-1}\right) \d x \d t + \int_{U}\nabla \cdot \left(\left(\sigma_{n-1}-\sigma_{n-2}\right)\u_{n-1}\right)\left(\phi_{n}-\phi_{n-1}\right) \d x \d t \notag\\
=&-\d  \int_{U}\left|\nabla\left(\phi_{n}-\phi_{n-1}\right)\right|^{2} \d x -\int_{U}\left(\sigma_{n-1}-\sigma_{n-2}\right)\u_{n-1}\cdot\nabla\left(\phi_{n}-\phi_{n-1}\right)\d x \d t \notag\\
\ls &-\d  \int_{U}\left|\nabla\left(\phi_{n}-\phi_{n-1}\right)\right|^{2} \d x+C \left\|\w_{n}-\w_{n-1}\right\| \left\|\w_{n-1}-\w_{n-2}\right\|\d t ,\notag
\end{align}
and
\begin{align}
&\int_{0}^{1}\left(\tilde{\mathcal{L}}_{\phi}\left(\sigma_{n-1}, \phi_{n-1}\right)-\tilde{\mathcal{L}}_{\phi}\left(\sigma_{n-2}, \phi_{n-1}\right)\right)\cdot\left(\w_{n}-\w_{n-1}\right)\d x  \d t \notag\\
=&\int_{U}\left(\sigma_{n-1}-\sigma_{n-2}\right)\nabla \phi_{n-1}\cdot\left(\u_{n}-\u_{n-1}\right)\d x \d t\\
\ls & C \left\|\w_{n}-\w_{n-1}\right\|\left\|\w_{n-1}-\w_{n-2}\right\|\d t. \notag
\end{align}
For the terms in $\tilde{f}$, similarly, we have
\begin{align}
&\int_{0}^{1} \left(\left(\bar{\rho}+\sigma_{n-1}\right)\nabla h\left(\sigma_{n-1}\right)-\left(\bar{\rho}+\sigma_{n-2}\right)\nabla h\left(\sigma_{n-2}\right)\right) \cdot\left(\w_{n}-\w_{n-1}\right)\d x  \d t\\
\ls & C \left\|\w_{n}-\w_{n-1}\right\|\left\|\w_{n-1}-\w_{n-2}\right\|\d t;\notag
\end{align}
and
\begin{align}
  & \mathbb{E} \left[\left|\int_{0}^{t}\int_{0}^{1} \left(\mathbb{F}_{n-1}-\mathbb{F}_{n-2}\right) \d  W \cdot \left(\w_{n}-\w_{n-1}\right)\d x \right|^{m}\right]\notag\\
= & \mathbb{E} \left[\left|\int_{0}^{t}\int_{0}^{1} \left(\mathbb{F}_{n-1}-\mathbb{F}_{n-2}\right)  \cdot\left(\w_{n}-\w_{n-1}\right)\d x \d  W\right|^{m}\right] \notag\\
\ls & \mathbb{E} \left[\left|C\int_{0}^{t}\left|\int_{0}^{1} \left(\mathbb{F}_{n-1}-\mathbb{F}_{n-2}\right) \cdot\left(\w_{n}-\w_{n-1}\right)\d x\right|^{2}\d s\right|^{\frac{m}{2}}\right]\\
\ls &  \mathbb{E}\left[\left|C\int_{0}^{t}\left\|\w_{n}-\w_{n-1}\right\|^{2}\left\|\w_{n-1}-\w_{n-2}\right\|^{2}\d s \right|^{\frac{m}{2}}\right] \notag \\
\ls &  \mathbb{E}\left[\left(C\sup_{s\in[0,t]}\left\|\w_{n-1}-\w_{n-2}\right\|^{2}\int_{0}^{t}\left\|\w_{n}-\w_{n-1}\right\|^{2}\d s\right)^{\frac{m}{2}}\right]. \notag\\
\ls & \mathbb{E}\left[\left(\delta_{2}\sup_{s\in[0,t]}\left\|\w_{n-1}-\w_{n-2}\right\|^{2}\right)^{m}\right]+ \mathbb{E}\left[\left(\int_{0}^{t}C_{\delta_{2}}\left\|\w_{n}-\w_{n-1}\right\|^{2}\d s\right)^{m}\right].\notag
\end{align}
By It\^o's formula, we have
\begin{align}
&\d \left(\mathcal{D}\left(\sigma_{n-1}\right)\left(\w_{n}-\w_{n-1}\right)\cdot \left(\w_{n}-\w_{n-1}\right)\right)\notag\\
=& \d  \mathcal{D}\left(\sigma_{n-1}\right) \left(\w_{n}-\w_{n-1}\right)\cdot \left(\w_{n}-\w_{n-1}\right)+ 2\mathcal{D}\left(\sigma_{n-1}\right)\left(\w_{n}-\w_{n-1}\right)\cdot \d  \left(\w_{n}-\w_{n-1}\right) \\
&+ \mathcal{D}\left(\sigma_{n-1}\right) \left\langle \d  \left(\w_{n}-\w_{n-1}\right),\d  \left(\w_{n}-\w_{n-1}\right) \right\rangle, \notag
\end{align}
where $\left\langle \d  \left(\w_{n}-\w_{n-1}\right),\d  \left(\w_{n}-\w_{n-1}\right) \right\rangle$ is a shorthand for the more detailed expression for quadratic variation
$$\left\langle ~\left\langle \d  \left(\w_{n}-\w_{n-1}\right),\d  \left(\w_{n}-\w_{n-1}\right) \right\rangle, \left\langle \d  \left(\w_{n}-\w_{n-1}\right),\d  \left(\w_{n}-\w_{n-1}\right) \right\rangle~\right\rangle_{\mathcal{H}}^{\frac{1}{2}},$$
 with a slight abuse of notation.
Moreover, we have
\begin{align}
& \mathcal{D}\left(\sigma_{n-1}\right) \left\langle \d  \left(\w_{n}-\w_{n-1}\right),\d  \left(\w_{n}-\w_{n-1}\right) \right\rangle \\
=& \left(\bar{\rho}+\sigma_{n-1}\right)\left|\mathbb{F}_{n-1}-\mathbb{F}_{n-2}\right|^{2}\d t,\notag
\end{align}
and
\begin{align}
  & \int_{0}^{1} \mathcal{D} \left(\sigma_{n-1}\right) \left\langle \d  \left(\w_{n}-\w_{n-1}\right),\d  \left(\w_{n}-\w_{n-1}\right) \right\rangle \d x
\ls  C\left\|\w_{n-1}-\w_{n-2}\right\|^{2}\d t.
\end{align}
Hence it holds
\begin{align}
\int_{0}^{1} \d \left(\mathcal{D}\left(\sigma_{n-1}\right)\left(\w_{n}-\w_{n-1}\right)\cdot \left(\w_{n}-\w_{n-1}\right)\right) \d x \ls C \left\|\w_{n}-\w_{n-1}\right\|^{2}\d t.
\end{align}
Combining the above estimates, for some $m\geqslant 2$, we have
\begin{align}
&\mathbb{E} \left[\left(\sup\limits_{s\in[0,t]}\int_{0}^{s} \d  \left\|\w_{n}-\w_{n-1}\right\|^{2} \right)^{m}\right]\\
\ls &\mathbb{E}\left[\left|\int_{0}^{t} C \left(\left\|\w_{n}-\w_{n-1}\right\|^{2}+\left\|\w_{n-1}-\w_{n-2}\right\|^{2}+ \left\|\w_{n}-\w_{n-1}\right\|\left\|\w_{n-1}-\w_{n-2}\right\|\right) \d s \right|^{m}\right] \notag\\
&+  \mathbb{E}\left[\left(\delta_{2}\sup_{s\in[0,t]}\left\|\w_{n-1}-\w_{n-2}\right\|^{2}\right)^{m}\right]+ \mathbb{E}\left[\left(\int_{0}^{t}C_{\delta_{2}}\left\|\w_{n}-\w_{n-1}\right\|^{2}\d s\right)^{m}\right],\notag
\end{align}
where $C$ depends on $M$.
By Cauchy's inequality and Jensen's inequality, we have
\begin{align}
&\mathbb{E} \left[ \left(\sup_{s\in[0,t]} \left\|\w_{n}-\w_{n-1}\right\|^{2}\right)^{m}\right]\notag\\
\ls &\mathbb{E} \left[ \left(\int_{0}^{t} C \left(\left\|\w_{n}-\w_{n-1}\right\|^{2}+\left\|\w_{n-1}-\w_{n-2}\right\|^{2}+ \left\|\w_{n}-\w_{n-1}\right\|\left\|\w_{n-1}-\w_{n-2}\right\|\right) \d s \right)^{m}\right]\notag\\
\ls & \mathbb{E} \left[ \left(\int_{0}^{t} C \left(\left\|\w_{n}-\w_{n-1}\right\|^{2}+\left\|\w_{n-1}-\w_{n-2}\right\|^{2}\right) \d s\right)^{m}\right]\\
\ls & \int_{0}^{t}\left(\mathbb{E} \left[\left(C_{0} \left\|\w_{n}-\w_{n-1}\right\|^{2}\right)^{m}\right]+ \mathbb{E} \left[\left(C_{0}\left\|\w_{n-1}-\w_{n-2}\right\|^{2}\right)^{m}\right]\right)\d s.\notag
\end{align}
The higher order contraction estimates are proved similarly to zeroth-order, with the same symmetrizing matrix and the important insulating boundary condition, and the detailed proof is omitted here.
In summary, we have
\begin{align}
&\mathbb{E} \left[ \left(\sup_{s\in[0,t]} \left\|\w_{n}-\w_{n-1}\right\|_{3}^{2}\right)^{m}\right] \\
\ls & \int_{0}^{t}\left(\mathbb{E} \left[\left(C_{0} \left\|\w_{n}-\w_{n-1}\right\|_{3}^{2}\right)^{m}\right]+ \mathbb{E}
\left[\left(C_{0}\left\|\w_{n-1}-\w_{n-2}\right\|_{3}^{2}\right)^{m}\right]\right)\d s. \notag
\end{align}
By Gr\"onwall's inequality, we have
\begin{align}
&\mathbb{E} \left[ \left( \sup_{s\in[0,t]} \left\|\w_{n}-\w_{n-1}\right\|_{3}^{2}\right)^{m}\right]\\
\ls & \mathbb{E} \left[ \left(\sup_{s\in[0,t]}\left\|\w_{n-1}-\w_{n-2}\right\|_{3}^{2}\right)^{m}\right]C_{0}^{m}t+\int_{0}^{t}\mathbb{E} \left[ \left(\sup_{s\in[0,\tau]}\left\|\w_{n-1}-\w_{n-2}\right\|_{3}^{2} \right)^{m}\right] \tau C_{0}^{2r} e^{C_{0}^{m}\tau} \d \tau \notag\\
\ls & 3 C_{0}^{m} \mathbb{E} \left[ \left(\sup_{s\in[0,\tau]}\left\|\w_{n-1}-\w_{n-2}\right\|_{3}^{2} \right)^{m}\right]t. \notag
\end{align}
Let $T_{1}\ls T_{0}$ and $3C_{0}^{m}T_{1}<1$, $e^{C_{0}^{m}T_{1}}\ls 2 $, then
\begin{align}\label{Contraction conclusion}
\mathbb{E} \left[ \left(\sup\limits_{s\in[0,t]} \left\|\w_{n}-\w_{n-1}\right\|_{3}\right)^{m}\right]
 \ls a ~ \mathbb{E} \left[\left( \sup\limits_{s\in[0,t]}\left\|\w_{n-1}-\w_{n-2}\right\|_{3}\right)^{m}\right], \quad a < 1,
 \end{align}
where $a=3 C_{0}^{m}T_{1}$ with $C_{0}$ depending on the initial data by the onto mapping estimates. Hence, $\w_{n}$ is a Cauchy sequence. 
By Banach's fixed point theorem, there exists a unique solution $\w$ in $L^{2m}\left(\Omega; C\left([0,T_{1}]; H^{3}\left(U\right)\right)\right)$. Since $\triangle \phi=\sigma$ holds, $\phi$ is also a unique solution in $L^{2m}\left(\Omega; C\left([0,T_{1}]; H^{5}\left(U\right)\right)\right)$ up to a constant, with the boundary condition $\nabla \phi\cdot \nu=0$.

By the proof of theorem 5.2.9 in \cite{Karatzas1988}, $(\rho,\u,\Phi)$ is the unique strong solutions to SEP, where $\rho,\u\in C\left([0,T_{1}]; H^{3}\left(U\right)\right)$ and $\Phi \in C\left([0,T_{1}]; H^{5}\left(U\right)\right)$ hold $\mathbb{P}$ a.s. We give the definition of the local strong solution as follows.
\begin{definition}
Let $\left(\Omega,\mathcal{F},\mathbb{P} \right)$ be a fixed stochastic basis with a complete right-continuous filtration $\mathcal{F}=\left(\mathcal{F}_{s}\right)_{s\geqslant 0}$ and $W$ be the fixed Wiener process. $\left(\rho, \u, \Phi\right)$ is called a strong solution to initial and boundary problem \eqref{3-D Euler Poisson}-\eqref{insulated boundary condition}-\eqref{initial conditions}-\eqref{Initial formula of phi}-\eqref{condition for F for 3-d}, if:
\begin{enumerate}
  \item $\left(\rho, \u, \Phi\right)$ is adapted to the filtration $\left(\mathcal{F}_{s}\right)_{s\geqslant 0}$;
  \item $\mathbb{P}[\{\left(\rho(0), \u(0), \Phi(0)\right)=\left(\rho_{0}, \u_{0}, \Phi_{0}\right) \}]=1$;
  \item the equation of continuity
\begin{align}
   \rho(t)
  =  \rho_{0}- \int_0^t \nabla\cdot(\rho\u)  \d s,\notag
\end{align}
  holds $\mathbb{P}$ {\rm a.s.}, for any $t\in [0, T_{1}]$;
  \item the momentum equation
\begin{align}
\u(t)=& \u_{0}  -\int_{0}^{t}(\u\cdot\nabla)\u  \d s -\int_{0}^{t} \frac{\nabla  P(\rho)}{\rho} \d s +\int_{0}^{t} \nabla \Phi  \d s  - \int_{0}^{t} \u \d s\\
 & +\int_{0}^{t}  \frac{\mathbb{F}(\rho,\u)}{\rho}\d W(s) ,\notag
\end{align}
 holds $\mathbb{P}$ {\rm a.s.}, for any $t\in [0, T_{1}]$;
 \item the electrostatic potential equation
 \begin{align}
\triangle\Phi  =\rho-b,
\end{align}
 holds $\mathbb{P}$ {\rm a.s.} for any $t\in [0, T_{1}]$.
\end{enumerate}
\end{definition}

\begin{remark}
Reviewing the above proof, \eqref{Contraction conclusion} holds for general stochastic forces without \eqref{2 Condition for stochastic force}. Thus, the local existence also holds.
\end{remark}
\smallskip
\begin{flushleft}
\textbf{Step 4: Energy estimates.}
\end{flushleft}
\subsection{Estimates up to third-order}\label{Estimates up to third-order subtitle}

In this subsection, we begin by symmetrizing the system. Then, we proceed with energy estimates up to third order, taking stochastic forces under the condition \eqref{2 Condition for stochastic force} for instance. 

\subsubsection{Zero-order estimates}

For the system \eqref{sto Euler-Poisson system},
we define the energy
\begin{align}
\mathcal{E}\left(t\right)=\int_{U}\frac{1}{2}\left(\bar{\rho}\left|\u\right|^{2}+Q'\left(\bar{\rho}\right)\sigma^{2}+\left|\nabla\phi\right|^{2} \right)\d x .
\end{align}
By It\^o's formula, we have
\begin{align}
&\d  \int_{U} \frac{1}{2}\left(\bar{\rho}+\sigma\right)\left|\u\right|^{2}\d x \\
= & \int_{U} \frac{1}{2}\d \left(\bar{\rho}+\sigma\right)\left|\u\right|^{2}\d x
+ \int_{U} \left(\bar{\rho}+\sigma\right)\u\cdot \d \u \d x +  \int_{U} \frac{1}{2}\left(\bar{\rho}+\sigma\right)\left|\mathbb{F}\right|^{2}\d t\d x. \notag
\end{align}
But here we will deal with $\bar{\rho}+\sigma$ and $\u$ together by considering the symmetrized system of $\w$. By It\^o's formula, it holds
\begin{align}
\int_{U}\d \left( \frac{1}{2} \mathcal{D} \w \cdot \w \right) \d x = \int_{U}\frac{1}{2} \w \left(\d \mathcal{D}\right)\w\d x + \int_{U} \mathcal{D} \d  \w\cdot \w \d x+ \int_{U} \frac{1}{2} \mathcal{D} \mathbb{F}\cdot \mathbb{F}\d x \d t,
\end{align}
which is $\d  \int_{U}\frac{1}{2}\left(\bar{\rho}\left|\u\right|^{2}+Q'\left(\bar{\rho}\right)\sigma^{2}\right)\d x$.
Over the domain $U$, we integrate
\begin{align}
&\mathcal{D} \d  \w\cdot \w+\left(\tilde{\mathcal{A}}^{1}\w_{,1} \cdot \w + \tilde{\mathcal{A}}^{2}\w_{,2}\cdot \w+\tilde{\mathcal{A}}^{3}\w_{,3}\cdot \w+ \B \w\cdot \w + \tilde{\mathcal{L}}_{\u}\cdot \w \right)\d t\\
=& \tilde{\mathcal{L}}_{\phi}\cdot \w\d t+ \tilde{f}\cdot \w,\notag
\end{align}
then we have
\begin{align}\label{equation of D w w}
 & \int_{U}  \d  \left(\frac{1}{2}\mathcal{D}\w\cdot \w \right) \d x =\int_{U}\frac{1}{2} \w \left(\d \mathcal{D}\right)\w\d x + \int_{U} \mathcal{D} \d  \w \cdot \w \d x+ \int_{U}\frac{1}{2}\mathcal{D}\mathbb{F}\cdot\mathbb{F} \d x \d t\notag\\
=& \int_{U}\frac{1}{2} \w \left(\d \mathcal{D}\right)\w\d x -\int_{U}\left(\tilde{\mathcal{A}}^{1}\w_{,1}\cdot \w + \tilde{\mathcal{A}}^{2}\w_{,2}\cdot \w+ \tilde{\mathcal{A}}^{3}\w_{,3}\cdot \w +\B \w\cdot \w\right) \d x \d t \\
 & -\int_{U} \tilde{\mathcal{L}}_{\u}\cdot \w \d x \d t  +\int_{U} \tilde{\mathcal{L}}_{\phi}\cdot \w \d x \d t +\int_{U} \nabla h\left(\sigma\right)\cdot \w \d x \d t \notag\\
 &  + \int_{U}\mathcal{D} \mathbb{F}\d  W\cdot \w\d x + \int_{U} \mathcal{D}\mathbb{F}\cdot\mathbb{F}\d x \d t. \notag
\end{align}
 Direct calculation shows that
 $$-\frac{1}{2}\tilde{\mathcal{A}}^{i}_{,i}+\B
 -\operatorname{diag}\left[-\frac{1}{2}\left(u^{i}Q'\left(\rho\right)\right)_{,i},-\frac{1}{2}\left(u^{i}\rho\right)_{,i},
 -\frac{1}{2}\left(u^{i}\rho\right)_{,i},-\frac{1}{2}\left(u^{i}\rho\right)_{,i}\right]$$
  is anti-symmetric \cite{Guo2006StabilityOS}. Hence, we have
\begin{align}
 &\int_{U} \left( \tilde{\mathcal{A}}^{1}\w_{,1}\cdot \w + \tilde{\mathcal{A}}^{2}\w_{,2}\cdot \w+ \tilde{\mathcal{A}}^{3}\w_{,3}\cdot \w + \B \w\cdot \w \right) \d x \d t \notag\\
=&\int_{U} \left(-\frac{1}{2}\left(\w \tilde{\mathcal{A}}^{1}_{,1}\w +\w \tilde{\mathcal{A}}^{2}_{,2}\w + \w \tilde{\mathcal{A}}^{3}_{,3}\w \right)+ \B \left|\w\right|^{2}\right) \d x \d t + \int_{U} \left(\w \tilde{\mathcal{A}}^{j}\w \right)_{,j} \d x \d t. \notag
\end{align}
On account of the insulated boundary condition $\u\cdot \nu|_{\partial U} =0$, it holds
\begin{align}
\int_{U} \left(\w \tilde{\mathcal{A}}^{j}\w \right)_{,j} \d x = \int_{\partial U} \left(\left(\u\cdot \nu\right)\left(Q'\left(\bar{\rho}\right)\sigma^{2}+\rho\left|\u\right|^{2}+2\rho Q'\left(\bar{\rho}\right)\right)\right)\d S \equiv 0.
\end{align}
Hence, it holds
\begin{align}
\int_{U} \left(-\frac{1}{2}\left(\w \tilde{\mathcal{A}}^{1}_{1}\w +\w \tilde{\mathcal{A}}^{2}_{2}\w + \w \tilde{\mathcal{A}}^{3}_{3}\w \right)+ \B\left|\w\right|^{2}\right) \d x \d t \ls C \left\|\w\right\|_{3}^{3}\d t.
\end{align}
Recalling \eqref{Lu and Lphi}, there hold
\begin{align}
\int_{U} \tilde{\mathcal{L}}_{\u}\cdot \w \d x \d t= \int_{U} \left(\bar{\rho}+\sigma\right)\left|\u\right|^{2}\d x \d t \geqslant \int_{U} C\bar{\rho}\left|\u\right|^{2}\d x \d t;
\end{align}
\begin{align}
\int_{U} \tilde{\mathcal{L}}_{\phi}\cdot \w \d x \d t= & \int_{U} \left(\bar{\rho}+\sigma\right)\nabla \phi \cdot \u \d x \d t = -\int_{U}\nabla \cdot \left(\left(\bar{\rho}+\sigma\right)\u\right)\phi \d x \d t \\
=& \int_{U}\sigma_{t}\phi \d x \d t= \int_{U}\left(\triangle \phi \right)_{t}\phi \d x \d t=-\d  \int_{U}\left|\nabla\phi\right|^{2} \d x.\notag
\end{align}
For the stochastic term, it holds
\begin{align}
\int_{U} \tilde{f}\cdot \w \d x = \int_{U}\left(O\left(\sigma^{2}\right)\d t-\mathbb{F}\d  W\right)\cdot\u\d x
\ls \left\|\w\right\|_{3}^{3}\d t+ \left|\int_{U} \mathbb{F}\d  W\cdot\u\d x \right|.
\end{align}
For $\left|\mathbb{F}\right|\ls C \left|\rho\u\right|^{2}$, we estimate
\begin{align}
    &\mathbb{E}\left[\left|\int_{0}^{t} \int_{U} \mathbb{F}\cdot\u\d x \d  W \right|^{m}\right]\ls \mathbb{E}\left[\left(\int_{0}^{t} \left|C\int_{U} \mathbb{F}\cdot\u\d x\right|^{2}\d s\right)^{\frac{m}{2}}\right]\notag\\
\ls &\mathbb{E}\left[\left(C\int_{0}^{t} \left|\int_{U} \left|\bar{\rho}\u\right|^{2}\left|\u\right|^{2}\cdot\u\d x\right|^{2}\d s\right)^{\frac{m}{2}}\right]\ls \mathbb{E}\left[\left(C\sup\limits_{s\in [0,t]} \left\|\u \right\|^{2} \int_{0}^{t} \left\| \u \right\|_{3}^{4} \d s\right)^{\frac{m}{2}}\right]\\
\ls &\delta_{3}^{m}\mathbb{E}\left[\left(\sup\limits_{s\in [0,t]} \left\|\u \right\|^{2}\right)^{m}\right] +C_{\delta_{3}}^{m}\mathbb{E}\left[\left(\int_{0}^{t} \left\| \u \right\|_{3}^{3} \d s\right)^{m}\right], \notag
\end{align}
where $\delta_{3}$ is taken such that $\delta_{3}^{m}\mathbb{E}\left[\left(\sup\limits_{s\in [0,t]} \left\|\u \right\|^{2}\right)^{m}\right]$ can be balanced by the left side by the time continuity of solutions. Similarly, it holds
\begin{align}
\int_{U} \frac{1}{2} \mathcal{D} \mathbb{F}\cdot \mathbb{F}\d x \d t \ls C \left\|\w\right\|_{3}^{3} \d t.
\end{align}
Besides, there holds
\begin{align}
 & \int_{U} \w \left(\d \mathcal{D}\right)\w\d x \notag\\
=& \int_{U}\w\left(\operatorname{diag}\left\{Q'\left(\bar{\rho}+\sigma\right)_{t},\left(\bar{\rho}+\sigma\right)_{t} ,\left(\bar{\rho}+\sigma\right)_{t},\left(\bar{\rho}+\sigma\right)_{t}\right\}\right)\w \d x \d t \notag\\
=& \int_{U} \left(Q''\left(\bar{\rho}+\sigma\right)\sigma_{t}\sigma^{2}+\sigma_{t}\left|\u\right|^{2}\right)\d x \d t\\
=&  \int_{U} \left(Q''\left(\bar{\rho}\right)+O\left(\sigma\right)\right)\left(-\nabla\cdot\left(\left(\bar{\rho}+\sigma\right)\u\right)\right)\sigma^{2}\d x \d t+ \int_{U} \left(-\nabla\cdot\left(\left(\bar{\rho}+\sigma\right)\u\right)\right)\left|\u\right|^{2}\d x\d t \notag \\
\ls & C \left\|\w\right\|_{3}^{3}\d t.\notag
\end{align}
In conclusion, as $\bar{\rho}$ have a positive lower bound, we have
\begin{align}\label{zero order estimate of w 3-D}
&\mathbb{E}\left[\left(\sup\limits_{s\in[0,t]}\int_{0}^{s} \d \left(\int_{U}\left|\w\right|^{2}\d x + \int_{U}\left|\nabla\phi\right|^{2}\d x\right) \d s\right)^{m}\right]+\mathbb{E}\left[\left(c_{2}\int_{0}^{t} \int_{U}\left|\u\right|^{2}\d x \d s \right)^{m}\right]\notag \\
\ls  & \mathbb{E}\left[\left(C \int_{0}^{t}  \left\|\w\right\|_{3}^{3}\d s\right)^{m}\right].
\end{align}
Next, we give the estimates of $\int_{0}^{t}\int_{U} \left\|\sigma\right\|^{2} \d x \d s$.
From the velocity equation \eqref{deformed sto Euler-Poisson}, we have
\begin{align}\label{zero order plus estimate eq}
\left(\nabla Q\left(\bar{\rho}+\sigma\right) -\nabla Q\left(\bar{\rho}\right)\right)\d t = -\d  \u -\left(\left(\u \cdot\nabla \right)\u-\u\right)\d t + \nabla \phi \d t + \frac{\mathbb{F}}{\bar{\rho}+\sigma}\d  W,
\end{align}
with
\begin{align}\label{Q(rho)-Q(bar of rho)}
 &\nabla \left(Q\left(\bar{\rho}+\sigma\right)-Q\left(\bar{\rho}\right)\right)
= Q'\left(\bar{\rho}+\sigma\right)\nabla \sigma+Q''\left(\bar{\rho}\right)\sigma \nabla\bar{\rho}+ \mathbf{h},
\end{align}
where
\begin{align}
h_{i}=O\left(\sigma^{2}\right).
\end{align}
We multiply the equation \eqref{zero order plus estimate eq} with $\left(\sigma, \sigma, \sigma\right)^{T}$. By the integration by parts and the insulating boundary condition, due to the condition that $\left|\nabla \bar{\rho}\right|>0$, the left side is
\begin{align}
\int_{U} \left|Q''\left(\bar{\rho}\right)\nabla\bar{\rho}\right| \left|\sigma\right|^{2}\d x + \int_{U} O \left(\sigma^{3}\right)\d x.
\end{align}
By It\^o's formula, there holds
\begin{align}
\left(\d u^{i}\right) \sigma =\d \left(u^{i} \sigma\right) - u^{i} \d \sigma,
\end{align}
where
\begin{align}
-\int_{0}^{t} \d \int_{U} \left(u^{i} \sigma\right) \d x \ls \int_{0}^{t} \d  \left(\frac{1}{2}\left\|\sigma\right\|^{2} + \frac{1}{2}\left\|u^{i}\right\|^{2}\right) .
\end{align}
By the continuity equation, it holds
\begin{align}
\int_{0}^{t}\int_{U} \left|u^{i} \d \sigma\right|\d x \ls C\int_{0}^{t} \left\|\w\right\|_{3}^{3} \d s.
\end{align}
For $-\u\d t$, we directly estimate
\begin{align}
\int_{0}^{t}\int_{U}\left| -u^{i}\sigma\right| \d x \d s \ls \frac{\delta_{4}}{2}\int_{0}^{t}\left\|\sigma\right\|^{2}\d s +C_{\delta_{4}}\int_{0}^{t}\left\|u^{i}\right\|^{2} \d s,
\end{align}
where $\delta_{4}$ is small such that $ \delta_{4}\int_{0}^{t}\left\|\sigma\right\|^{2}\d s$ can be balanced by the left side. For the term $\nabla \phi \d t$ in \eqref{zero order plus estimate eq}, we estimate
\begin{align}
\int_{0}^{t}\int_{U}\left|-\phi_{,i}\sigma\right|\d x\d s \ls \frac{\delta_{4}}{2}\int_{0}^{t} \left\|\sigma\right\|^{2}\d s +C_{\delta_{4}} \sup_{s\in [0,t]}\left\|\phi_{,i}\right\|^{2}.
\end{align}
For the stochastic term, since $\left|\mathbb{F}\right|\ls C\left|\rho\u\right|^{2}$, we estimate
\begin{align}
&\mathbb{E}\left[\left|\int_{0}^{t}\int_{U}\frac{\mathbb{F}^{i}}{\bar{\rho}+\sigma}\d  W \sigma \d x\right|^{m}\right]
\ls  \mathbb{E}\left[\left|C\int_{0}^{t}\left|\int_{U}\frac{\mathbb{F}^{i}}{\bar{\rho}+\sigma} \sigma \d x\right|^{2} \d s\right|^{\frac{m}{2}}\right]\notag\\
\ls & \mathbb{E}\left[\left|C\int_{0}^{t}\left|\int_{U}\left|\bar{\rho}+\sigma\right|\left|\u\right|^{2} \sigma \d x\right|^{2} \d s\right|^{\frac{m}{2}}\right]
\ls  \mathbb{E}\left[\left|C\int_{0}^{t}\left\|\u\right\|^{2}\left\|\bar{\rho}\sigma+\sigma^{2}\right\|_{\infty}^{2}\d s\right|^{\frac{m}{2}}\right]\\
\ls &  \mathbb{E}\left[\left(\frac{1}{4}\sup_{s\in [0,t]}\left\|\u\right\|^{2}\right)^{m}\right] + \mathbb{E}\left[\left(C \int_{0}^{t}\left\|\u\right\|^{2}\left\|\sigma\right\|_{\infty}^{2}\d s\right)^{m}\right]\notag\\
&+ \mathbb{E}\left[\left(\frac{1}{4}\sup_{s\in [0,t]} \left\|\u\right\|^{2}\right)^{m}\right] + \mathbb{E}\left[\left(C \int_{0}^{t}\left\|\u\right\|^{2}\left\|\sigma\right\|_{\infty}^{4}\d s\right)^{m}\right] \notag\\
\ls &  \mathbb{E}\left[\left(\frac{1}{2}\sup_{s\in [0,t]} \left\|\u\right\|^{2}\right)^{m}\right]+\mathbb{E}\left[\left(C\int_{0}^{t} \left\|\w\right\|_{3}^{3}\d s\right)^{m}\right]. \notag
\end{align}
Therefore, we have
\begin{align}\label{zero order plus estimate}
&\mathbb{E}\left[\left(\int_{0}^{t} \left\|\sigma\right\|^{2}\d s\right)^{m}\right]\notag \\
\ls & \mathbb{E}\left[\left(\int_{0}^{t} \d  \left(\frac{1}{2}\left\|\sigma\right\|^{2} + \frac{1}{2}\left\|\u\right\|^{2}\right)\right)^{m}\right]+\mathbb{E}\left[\left(\frac{1}{2}\sup_{s\in [0,t]} \left\|\u\right\|^{2}\right)^{m}\right]\\
& + \mathbb{E}\left[\left( C \sup_{s\in [0,t]}\left\|\nabla\phi\right\|^{2}\right)^{m}\right]+  \mathbb{E}\left[\left(C\int_{0}^{t}\left\|\u\right\|^{2} \d s\right)^{m}\right] +  \mathbb{E}\left[\left(C\int_{0}^{t} \left\|\w\right\|_{3}^{3} \d s\right)^{m}\right]. \notag
\end{align}
Furthermore, we can give the estimate of $\mathbb{E}\left[\left(\int_{0}^{t} \left\|\nabla \phi\right\|^{2}\d s\right)^{m}\right]$. We multiply \eqref{zero order plus estimate eq} with $\nabla \phi$ and integrate it over $U$, then we have
\begin{align}
&\int_{U} \left|\nabla \phi\right|^{2}\d x \d t \notag \\
 = &-\int_{U} \left(\nabla Q\left(\bar{\rho}+\sigma\right) -\nabla Q\left(\bar{\rho}\right)\right)\cdot \nabla \phi\d x \d t +\int_{U} \d  \u \cdot \nabla \phi\d x  \\
 &+\int_{U}\left(\left(\u \cdot\nabla \right)\u-\u\right)\cdot \nabla \phi\d x\d t - \int_{U} \frac{\mathbb{F}}{\bar{\rho}+\sigma}\d  W\cdot \nabla \phi\d x. \notag
\end{align}

From \eqref{Q(rho)-Q(bar of rho)}, by integration by parts and $\triangle \phi =\sigma $, we estimate
\begin{align}
&\int_{0}^{t} \int_{U} \left(\nabla Q\left(\bar{\rho}+\sigma\right) - \nabla Q\left(\bar{\rho}\right) \right) \cdot \nabla \phi\d x \d s \\
\ls & C\left( \sup\limits_{s\in [0,t]}\left\|\nabla \phi\right\|^{2} + \int_{0}^{t}\left\|\sigma\right\|^{2}\d s + \int_{0}^{t}\left\|\sigma\right\|_{3}^{3}\d s \right).\notag
\end{align}
By It\^o's formula, there holds
\begin{align}
\left(\d \u\right)\nabla \phi =\d \left(\u \nabla \phi\right) - \u \d  \nabla \phi ,
\end{align}
where
\begin{align}
-\int_{0}^{t} \d \int_{U} \left(\u\nabla \phi \right) \d x \ls \int_{0}^{t} \d  \left(\frac{1}{2}\left\|\nabla \phi\right\|^{2} + \frac{1}{2}\left\|\u\right\|^{2}\right).
\end{align}
By the continuity equation, it holds
\begin{align}
&\int_{0}^{t}\int_{U} \left|\u \d  \nabla \phi\right|\d x = \int_{0}^{t}\int_{U} \left|\u \d  \nabla \triangle^{-1}\sigma \right|\d x\\
 = &\int_{0}^{t}\int_{U} \left|\u\nabla \triangle^{-1}\d   \sigma \right|\d x \ls C\int_{0}^{t} \left\|\w\right\|_{3}^{3} \d s.\notag
\end{align}
It is clear that
\begin{align}
\int_{0}^{t}\int_{U}\left(\left(\u \cdot\nabla \right)\u\right)\cdot \nabla \phi\d x\d t \ls C\int_{0}^{t} \left\|\w\right\|_{3}^{3} \d s.
\end{align}
For $-\u\d t$, we directly estimate
\begin{align}
\int_{0}^{t}\int_{U}\left| -\u \cdot \nabla \phi\right| \d x \d s \ls \frac{1}{2}\int_{0}^{t}\left\| \nabla \phi\right\|^{2}\d s + \frac{1}{2}\int_{0}^{t}\left\|\u\right\|^{2} \d s.
\end{align}
For the stochastic term, since $\left|\mathbb{F}\right|\ls C\left|\rho\u\right|^{2}$ and $\triangle \phi= \sigma$, we estimate
\begin{align}
&\mathbb{E}\left[\left|\int_{0}^{t}\int_{U}\frac{\mathbb{F}}{\bar{\rho}+\sigma}\d  W \cdot \nabla \phi \d x\right|^{m}\right]
\ls  \mathbb{E}\left[\left|C\int_{0}^{t}\left|\int_{U}\frac{\mathbb{F}}{\bar{\rho}+\sigma}\cdot \nabla \phi\d x\right|^{2} \d s\right|^{\frac{m}{2}}\right]\notag\\
\ls & \mathbb{E}\left[\left|C\int_{0}^{t}\left|\int_{U}\left|\bar{\rho}+\sigma\right|\left|\u\right|^{2}  \left|\nabla \phi\right| \d x\right|^{2} \d s\right|^{\frac{m}{2}}\right]
\ls  \mathbb{E}\left[\left|C\int_{0}^{t}\left\|\u\right\|^{2}\left\|\bar{\rho}+\sigma\right\|_{\infty}^{2}\left\|\nabla \phi\right\|_{\infty}^{2}\d s\right|^{\frac{m}{2}}\right]\\
\ls &  \mathbb{E}\left[\left(\frac{1}{4}\sup_{s\in [0,t]}\left\|\u\right\|^{2}\right)^{m}\right] +  \mathbb{E}\left[\left(C\int_{0}^{t}\left\|\u\right\|^{2}\left\|\sigma\right\|_{1}^{2}\d s\right)^{m}\right]\notag\\
&+ \mathbb{E}\left[\left(\frac{1}{4}\sup_{s\in [0,t]} \left\|\u\right\|^{2}\right)^{m}\right] + \mathbb{E}\left[\left(C \int_{0}^{t}\left\|\u\right\|^{2}\left\|\sigma\right\|_{\infty}^{2}\left\|\sigma\right\|_{1}^{2}\d s\right)^{m}\right] \notag\\
\ls &  \mathbb{E}\left[\left(\frac{1}{2}\sup_{s\in [0,t]} \left\|\u\right\|^{2}\right)^{m}\right]+\mathbb{E}\left[\left(C\int_{0}^{t} \left\|\w\right\|_{3}^{3}\d s\right)^{m}\right]. \notag
\end{align}
Therefore, we have
\begin{align}\label{zero order plus estimate}
&\mathbb{E}\left[\left(\int_{0}^{t} \left\|\nabla \phi\right\|^{2}\d s\right)^{m}\right]\notag \\
\ls & \mathbb{E}\left[\left(\int_{0}^{t} \d  \left(\frac{1}{2}\left\|\nabla \phi\right\|^{2} + \frac{1}{2}\left\|\u\right\|^{2}\right)\right)^{m}\right]+\mathbb{E}\left[\left(\frac{1}{2}\sup_{s\in [0,t]} \left\|\u\right\|^{2}\right)^{m}\right]\\
& + \mathbb{E}\left[\left( C \sup_{s\in [0,t]}\left\|\nabla\phi\right\|^{2}\right)^{m}\right]+  \mathbb{E}\left[\left(C\int_{0}^{t}\left\|\w\right\|^{2} \d s\right)^{m}\right] +  \mathbb{E}\left[\left(C\int_{0}^{t} \left\|\w\right\|_{3}^{3} \d s\right)^{m}\right]. \notag
\end{align}
Multiplying a small constant to it, and we plus the zero-order estimates \eqref{zero order estimate of w 3-D} such that
\begin{align}
&\mathbb{E}\left[\left(\sup\limits_{s\in[0,t]}\int_{0}^{s} \d  \left(\frac{1}{2}\left\|\sigma\right\|^{2} + \left\|\u\right\|^{2}+\frac{1}{2}\left\|\nabla \phi\right\|^{2}\right)\right)^{m}\right]+\mathbb{E}\left[\left(\frac{1}{2}\sup_{s\in [0,t]} \left\|\u\right\|^{2}\right)^{m}\right]\\
&+\mathbb{E}\left[\left( C \sup_{s\in [0,t]}\left\|\nabla\phi\right\|^{2}\right)^{m}\right]+  \mathbb{E}\left[\left(C\int_{0}^{t}\left\|\w\right\|^{2} \d s\right)^{m}\right]\notag
\end{align}
 can be balanced by \eqref{zero order estimate of w 3-D}. Then we obtain
\begin{align}
&\mathbb{E}\left[\left( \sup\limits_{s\in[0,t]} \int_{0}^{s} \d \left(\left\|\w\right\|^{2} + \left\|\nabla\phi\right\|^{2}\right)+\zeta_{5}\left(\int_{0}^{t} \left\|\w\right\|^{2}+ \left\|\nabla\phi\right\|^{2}\right)\d s \right)^{m}\right] \\
\ls  & \mathbb{E}\left[\left(C\int_{0}^{t} \left\|\w\right\|_{3}^{3} \d s\right)^{m}\right]. \notag
\end{align}
where $C$ depends on $m$.
\subsubsection{First order estimates}
Taking derivative to \eqref{symmetrized system}, we have
\begin{align}\label{unsimplified 1 order derivative of symmetrized system}
&\nabla\left(\mathcal{D} \d  \w\right) +\nabla\left(\left(\tilde{\mathcal{A}}^{1}\w_{,1}+\tilde{\mathcal{A}}^{2}\w_{,2}+\tilde{\mathcal{A}}^{3}\w_{,3}\right)+\nabla\left(\B\w\right)+\nabla\left(\tilde{\mathcal{L}}_{\u}\right)\right)\d t\\
& = \nabla\left(\tilde{\mathcal{L}}_{\phi}\right)\d t + \nabla \tilde{f}.\notag
\end{align}
Recalling $\mathcal{D}=\operatorname{diag}\left[Q'\left(\bar{\rho}+\sigma\right),\bar{\rho}+\sigma,\bar{\rho}+\sigma,\bar{\rho}+\sigma\right]$, we calculate
\begin{align}
\nabla\left(\mathcal{D} \d  \w\right)=&\partial_{j}\left(\begin{array}{c}Q'\left(\bar{\rho}+\sigma\right)\sigma_{t} \\ \left(\bar{\rho}+\sigma\right) \d u^1 \\  \left(\bar{\rho}+\sigma\right) \d u^2 \\ \left(\bar{\rho}+\sigma\right) \d u^3 \end{array}\right)=\left(\begin{array}{c}\partial_{j}Q'\left(\bar{\rho}+\sigma\right)\sigma_{t}+Q'\left(\bar{\rho}+\sigma\right)\partial_{j}\sigma_{t}\\ \partial_{j}\left(\bar{\rho}+\sigma\right) \d u^1 +\left(\bar{\rho}+\sigma\right)\partial_{j}\d u^1 \\ \partial_{j}\left(\bar{\rho}+\sigma\right) \d u^2 +\left(\bar{\rho}+\sigma\right)\partial_{j}\d u^2 \\ \partial_{j}\left(\bar{\rho}+\sigma\right) \d u^3 +\left(\bar{\rho}+\sigma\right)\partial_{j}\d u^3 \end{array}\right)\\
 =&\nabla \cdot \mathcal{D} \d \w+ \mathcal{D} \nabla\d \w ,\notag
\end{align}
\begin{align}
\nabla \left( \tA^{i}\w_{,i}\right) = \partial_{l}\left(\tA^{i}_{jk}w_{k,i}\right)=\partial_{l}\tA^{i}_{jk}w_{k,i}+\tA^{i}_{jk}\partial_{l}w_{k,i}=\nabla \tA^{i} \w_{,i} + \tA^{i}\nabla \w_{,i},
\end{align}
\begin{align}
\nabla \left(\B\w\right)=\partial_{l}\left(\B_{jk}w_{k}\right)=\partial_{l}\B_{jk} w_{k}+ \B_{jk}\partial_{l}w_{k}=\nabla \B\w+ \B\nabla \w,
\end{align}
\begin{align}
\nabla\left(\tilde{\mathcal{L}}_{\u}\right)=\left[\begin{array}{c}0 \\ \nabla\u\end{array}\right],
\end{align}
\begin{align}
\nabla\left(\tilde{\mathcal{L}}_{\phi}\right)=\left[\begin{array}{c}0 \\ \nabla\left(\left(\bar{\rho}+\sigma\right)\nabla\phi\right)\end{array}\right],
\end{align}
and
\begin{align}
\nabla\left(\tilde{f}\right)=\left[\begin{array}{c}0 \\ \nabla \left(O\left(\sigma^{2}\right)-\mathbb{F}\d  W\right)\end{array}\right]=\left[\begin{array}{c}0 \\ O\left(\sigma\nabla \sigma\right)\d t-\nabla\mathbb{F}\d  W\end{array}\right].
\end{align}
Hence \eqref{unsimplified 1 order derivative of symmetrized system} is deduced to
\begin{align}\label{1 order derivative of symmetrized system}
 &\d  \w \nabla \cdot \mathcal{D}+ \mathcal{D}\nabla \d  \w+ \left(\nabla \tA^{i} \w_{,i} + \tA^{i}\nabla \w_{,i}+ \nabla \B \w+ \B\nabla \w+ \nabla \u\right)\d t\notag\\
=& \left[\begin{array}{c}0 \\ \nabla\left(\left(\bar{\rho}+\sigma\right)\nabla\phi\right)\end{array}\right]\d t + \left[\begin{array}{c}0 \\ O\left(\sigma\nabla\sigma\right)\end{array}\right]\d t-\left[\begin{array}{c}0 \\\nabla \mathbb{F}\d  W \end{array}\right].
\end{align}
Multiplying \eqref{1 order derivative of symmetrized system} with $\nabla \w$, and integrating it on $U$, we have
\begin{align}
& \int_{U} \frac{1}{2}\mathcal{D} \d  \nabla \w : \nabla \w \d x + \int_{U} \tA^{i}\partial_{i} \left(\left|\nabla \w\right|^{2} \right)\d x \d t + \int_{U}\B\left|\nabla \w \right|^{2}\d x \d t+ \int_{U}\left|\nabla \u\right|^{2}\d x \d t\notag\\
= & \int_{U}\left[\begin{array}{c}0 \\ \nabla\left(\left(\bar{\rho}+\sigma\right)\nabla\phi\right)\end{array}\right]:\nabla \w \d x \d t - \int_{U}\nabla \tA^{i}\w_{,i}: \nabla \w \d x \d t \\
  & -  \int_{U} \nabla \B\w:\nabla \w \d x \d t - \int_{U} \d \w\nabla \cdot \mathcal{D}:\nabla \w \d x  + \int_{U} \left[\begin{array}{c}0 \\ O\left(\sigma\nabla\sigma\right)\end{array}\right]:\nabla \w \d x \d t \notag\\
  &- \int_{U}\left[\begin{array}{c}0 \\\nabla \mathbb{F}\d  W \end{array}\right]:\nabla \w \d x. \notag
\end{align}
Since $\sigma_{t}=-\nabla\cdot \left(\left(\bar{\rho}+\sigma\right)\u\right)$, we estimate
\begin{align}
  &\int_{U}  \frac{1}{2} \mathcal{D}_{t}\nabla \w : \nabla \w \d x \d t\notag\\
= &\int_{U} \nabla \w \operatorname{diag}\left[\left(Q'\left(\bar{\rho}+\sigma\right)\right)_{t},\left(\bar{\rho}+\sigma\right)_{t},\left(\bar{\rho}+\sigma\right)_{t},\left(\bar{\rho}+\sigma\right)_{t}\right]
\nabla \w \d x \d t\notag\\
= & \int_{U} \left( \sigma_{t}Q''\left(\bar{\rho}+\sigma\right)\sigma^{2}+\sigma_{t}\left|\nabla \u \right|^{2}\right) \d x \d t \\
= & \int_{U}\left(-Q''\left(\bar{\rho}+\sigma\right)\nabla\cdot \left(\left(\bar{\rho}+\sigma\right)\u\right)\sigma^{2}-\nabla\cdot \left(\left(\bar{\rho}+\sigma\right)\u\right)\left|\nabla\u\right|^{2}\right)\d x \d t\notag\\
\ls & C\left\|\w\right\|_{3}^{3} \d t.\notag
\end{align}
Due to the boundary conditions $\u\cdot \nu=0$, it follows that
\begin{align}
\int_{U} \tA^{i}\partial_{i} \left(\left|\nabla \w\right|^{2} \right)\d x  + \int_{U}\B\left|\nabla \w \right|^{2}\d x=0,
\end{align}
and
\begin{align}
& \int_{0}^{t}\int_{U}\left[\begin{array}{c}0 \\ \nabla\left(\left(\bar{\rho}+\sigma\right)\nabla\phi\right)\end{array}\right]:\nabla \w \d x \d s - \int_{0}^{t}\left(\int_{U}\nabla \tA^{i}\w_{,i}: \nabla \w \d x + \int_{U} \nabla \B\w:\nabla \w \d x\right) \d s \notag\\
 \ls & C \int_{0}^{t}\left\|\w\right\|_{3}^{3}\d s +  \frac{\delta_{5}}{3} \sup\limits_{s\in [0,t]} \left\|\nabla \u \right\|^{2} + C_{\delta_{5}}\int_{0}^{t}\left\|\nabla \phi \right\|^{2}\d s,
 \end{align}
 where $\int_{0}^{t}\left\|\nabla \phi \right\|^{2}\d s= \int_{0}^{t}\left\|\tilde{e} \right\|^{2}\d s$ can be bounded by $\int_{0}^{t}\left\|\w\right\|_{3}^{3}\d s $ from the zeroth-order energy estimates, $\delta_{5}$ being determined later.
 Similarly, we estimate
\begin{align}
    &\mathbb{E}\left[\left|\int_{0}^{t}\int_{U}\nabla \mathbb{F}\d  W:\nabla \w \d x \right|^{m}\right]\ls \mathbb{E}\left[\left| \frac{\delta_{5}}{3} \sup\limits_{s\in [0,t]} \left\|\u \right\|^{2}\right|^{m}\right] + \mathbb{E}\left[\left| C_{\delta_{5}}\int_{0}^{t}\left\|\w\right\|_{3}^{3} \d s\right|^{m}\right].
\end{align}
From \eqref{sto Euler-Poisson system}, we have
 \begin{align}
   & - \int_{U} \d \w\nabla \mathcal{D}:\nabla \w \d x \notag\\
 = &-\int_{U} \left(\mathcal{A}^{1}\w_{,1}+\mathcal{A}^{2}\w_{,2}+\mathcal{A}^{3}\w_{,3}+\mathcal{B}\w+\mathcal{L}_{\u}\right)\nabla \mathcal{D}:\nabla \w \d x \d t \\
  &+ \int_{U}\mathcal{L}_{\mathcal{\phi}}\nabla \mathcal{D}:\nabla \w \d x \d t +  \int_{U}\left(\mathcal{L}_{\mathcal{\phi}}+O\left(\sigma^{2}\right)\right)\nabla \mathcal{D}:\nabla \w \d x \d t -\int_{U} \mathbb{F}\d  W\nabla \mathcal{D}:\nabla \w \d x \notag\\
  \ls & C\left\|\w\right\|_{3}^{3}\d t + \int_{U} \mathbb{F}\d  W\nabla \mathcal{D}:\nabla \w \d x . \notag
 \end{align}
  Similarly, we have
\begin{align}
&\mathbb{E}\left[\left|\int_{0}^{t}\int_{U} \mathbb{F}\d  W\nabla \mathcal{D}:\nabla \w \d x\right|^{m}\right]\\
 \ls & \mathbb{E}\left[\left(\frac{\delta_{5}}{3} \sup\limits_{s\in [0,t]} \left\|\u \right\|^{2}\right)^{m}\right] + \mathbb{E}\left[\left(C_{\delta_{5}} \int_{0}^{t}\left\|\w\right\|_{3}^{3} \d s \right)^{m}\right].
\end{align}
We take $\delta_{5}$ such that $\frac{\delta_{5}}{3}\left\|\u \right\|^{2}$ and $\frac{\delta_{5}}{3}\left\|\nabla \u \right\|^{2}$ can be balanced by the left side of energy estimates. Similar as the estimates for \eqref{zero order plus estimate}, we have the estimate of $\int_{0}^{t} \left\|\nabla\sigma\right\|^{2} \d s$.
In conclusion, we have
\begin{align}
&\mathbb{E}\left[\left|\sup\limits_{s\in[0,t]} \int_{0}^{s} \d  \left( \frac{1}{2}\int_{U} \mathcal{D} \nabla \w : \nabla \w  \d x\right)\right|^{m}\right] + \mathbb{E}\left[\left|c_{4}\int_{0}^{t} \int_{U}\left|\nabla \w \right|^{2}\d x \d s \right|^{m}\right] \\
\ls  &\mathbb{E}\left[\left|\int_{0}^{t} C \left\|\w\right\|_{3}^{3} \d s\right|^{m}\right],\notag
\end{align}
where $C$ is independent on $t$.
\subsubsection{Second order estimates}\label{second order estimates subsec}
We write \eqref{sto Euler-Poisson system} in the form of components, and the $i-$th equation is
\begin{align}
&d_{i} \d w_{i}+ \left(\left(\tA^{1}\right)_{ij}w_{j,1}+ \left(\tA^{2}\right)_{ij}w_{j,2}+\left(\tA^{3}\right)_{ij}w_{j,3}+\left(\B\right)_{ij}w_{j}+d_{i}w_{i}\right)\d t\\
=&\left(d_{i}\phi_{,i}+h\left(\sigma\right)_{i}\right)\d t - \mathbb{F}_{i}\d  W.\notag
\end{align}
Taking the second-order derivatives, we have
\begin{align}\label{system of 2 order derivative}
&\partial_{kl}^{2}\left(d_{i} \d  w_{i}+ \left(\left(\tA^{1}\right)_{ij}w_{j,1}+ \left(\tA^{2}\right)_{ij}w_{j,2}+\left(\tA^{3}\right)_{ij}w_{j,3}+\left(\B\right)_{ij}w_{j}+d_{i}w_{i}\right)\d t\right)\\
&=\partial_{kl}^{2}\left(\left(d_{i}\phi_{,i}+h\left(\sigma\right)_{i}\right)\d t-\mathbb{F}_{i}\d  W\right).\notag
\end{align}
Multiplying \eqref{system of 2 order derivative} with $\partial_{k}\partial_{l}w_{i}$ and integrating it over $U$, we have
\begin{align}
\int_{U} d_{i} \partial_{k}\partial_{l}\d w_{i}\partial_{k}\partial_{l}w_{i} \d x  = \int_{U} d_{i} \d  \left|\partial_{k}\partial_{l}w_{i}\right|^{2} \d x,
\end{align}
By the insulated boundary condition $\u\cdot \nu=0$,
for all $i,j$, there holds
\begin{align}
\int_{U}\partial_{k}\partial_{l}w_{j}\left(-\frac{1}{2} \left(\left(\tA^{1}\right)_{ij,1}+\left(\tA^{2}\right)_{ij,2}+\left(\tA^{3}\right)_{ij,3}\right)+\left(\B\right)_{ij}\right) \partial_{k}\partial_{l}w_{i} \d x=0.
\end{align}
By\eqref{sto Euler-Poisson system}, 
the integral in the deterministic terms are bounded by $C \left\|\w\right\|_{3}^{3}$. 
The stochastic term is estimated as follows
\begin{align}
&\mathbb{E}\left[\left|\int_{0}^{t}\int_{U} \partial_{kl}^{2}\mathbb{F}_{i} \partial_{k}\partial_{l}w_{i}\d x \d  W \right|^{m}\right]\\
 \ls &\mathbb{E}\left[\left(\delta_{6}\sup\limits_{s\in [0,t]}\left\|\nabla \w\right\|^{2} \right)^{m}\right]+ \mathbb{E}\left[\left(C_{\delta_{6}}\int_{0}^{t}\left\|\w\right\|_{3}^{3}\d s \right)^{m}\right], \notag
\end{align}
where $\delta_{6}$ is taken such that $\delta_{6}\sup\limits_{s\in [0,t]}\left\|\nabla \w\right\|^{2}$ can be obtained by left side in first-order estimates. Similar to the estimates \eqref{zero order plus estimate}, we have the estimates for $\int_{0}^{t} \int_{U}\left|\partial^{2} \sigma \right|^{2}\d x \d s$.
Taking the sum over all the index $i=1,2,3,4$, we have
\begin{align}
 &\mathbb{E}\left[\left|\sup\limits_{s\in[0,t]} \int_{0}^{s} \d  \left(\int_{U} \frac{1}{2} \left| \partial^{2} \w \right|^{2} \d x\right) \right|^{m}\right]
  + \mathbb{E}\left[\left|c_{5}\int_{0}^{t} \int_{U}\left|\partial^{2} \w \right|^{2}\d x \d s\right|^{m}\right] \\
  \ls & \mathbb{E}\left[\left| \int_{0}^{t} C \left\|\w\right\|_{3}^{3} \d s \right|^{m}\right],\notag
\end{align}
 with the assumption that $\bar{\rho}$ have a positive lower bound, where $C$ is independent on $t$.
\subsubsection{Third-order estimates}

Considering the $3$-order estimates, we take an additional derivative of \eqref{system of 2 order derivative}. Repeating the argument in subsection \ref{second order estimates subsec}, we have
\begin{align}
&\mathbb{E}\left[\left| \sup\limits_{s\in[0,t]}\int_{0}^{s} \d  \left( \frac{1}{2}\int_{U} \mathcal{D} \nabla \w : \nabla \w  \d x\right)\right|^{m}\right] + \mathbb{E}\left[\left|c_{4}\int_{0}^{t} \int_{U}\left|\nabla \w \right|^{2}\d x \d s \right|^{m}\right] \\
\ls  &\mathbb{E}\left[\left|\int_{0}^{t} C \left\|\w\right\|_{3}^{3} \d s\right|^{m}\right],\notag
\end{align}
where $C$ is independent on $t$.
 \begin{align}
 &\mathbb{E}\left[\left|\sup\limits_{s\in[0,t]} \int_{0}^{s} \d \left( \int_{U} \frac{1}{2} \left| \partial^{3} \w \right|^{2} \d x\right) \right|^{m}\right] +  \mathbb{E}\left[\left|c_{6} \int_{0}^{t} \int_{U}\left|\partial^{3} \w\right|^{2}\d x \d s \right|^{m}\right] \\
 \ls &\mathbb{E}\left[\left|\int_{0}^{t} C \left\|\w\right\|_{3}^{3} \d s \right|^{m}\right],\notag
 \end{align}
  with the assumption that $\bar{\rho}$ have a positive lower bound.

\smallskip
\begin{flushleft}
\textbf{Step 5: Global existence.}
\end{flushleft}
\subsection{Global existence}

In this subsection, we show the global existence for both cases on stochastic forces under \eqref{2 Condition for stochastic force} and general forces.

 \subsubsection{For stochastic forces under \eqref{2 Condition for stochastic force} and small perturbation for initial data \eqref{initial conditions assumption}}
We combine the energy estimates up to third order. Then, the assumption that $\bar{\rho}$ have a positive lower bound, leads to the following inequality:
\begin{align}\label{energy estimate for insulating bdy con}
&\mathbb{E}\left[\left|\sup\limits_{s\in[0,t]}\left(\left\|\w\right\|_{3}^{2}(s)+\left\|\nabla \phi\right\|^{2}(s)\right) +\alpha \int_{0}^{t}\left(\left\|\w\right\|_{3}^{2}+\left\|\nabla \phi\right\|^{2}\right) (s)\d s \right|^{m}\right]\\
\ls &\mathbb{E}\left[\left|C\int_{0}^{t}\left\|\w\right\|_{3}^{3}\d s\right|^{m}\right] +\mathbb{E}\left[\left(C \left(\left\|\w_{0}\right\|_{3}^{2} + \left\|\nabla \phi_{0}\right\|^{2} \right)  \right)^{m}\right], \notag
\end{align}
 where $\alpha \ls c_{i}, i=1,\cdots, 6$, and $C$ depends on $\bar{\rho}$, $m$ and the domain $U$, but is independent on $t$.
Since $\left\|\w\right\|_{3}$ is small, we have
\begin{align}
&\mathbb{E}\left[\sup\limits_{s\in[0,t]}\left(\left|\left\|\w\right\|_{3}^{2}+ \left\|\nabla \phi\right\|^{2}\right)+\alpha \int_{0}^{t}\left(\left\|\w\right\|_{3}^{2}+\left\|\nabla \phi\right\|^{2}\right)(s)\d s -C\int_{0}^{t}\left\|\w\right\|_{3}^{3}\d s\right|^{m}\right]\\
\ls &\mathbb{E}\left[\left(C\left(\left\|\w_{0}\right\|_{3}^{2} + \left\|\nabla \phi_{0}\right\|^{2} \right)  \right)^{m}\right],\notag
\end{align}
and,
\begin{align}
\mathbb{E}\left[\left(\sup\limits_{s\in[0,t]}\left(\left\|\w\right\|_{3}^{2}+ \left\|\nabla \phi\right\|^{2}\right)\right)^{m} \right]\ls \mathbb{E}\left[\left(C\left(\left\|\w_{0}\right\|_{3}^{2} + \left\|\nabla \phi_{0}\right\|^{2} \right)  \right)^{m}\right],
\end{align}
where $C$ is independent on $t$.
With the above uniform estimates for any time $t$, and the local existence on $\left[0, T_{1}\right]$, we can extend the existence to $\left[T_{1}, T_{1}+\tilde{T}\right]$, and extend to any time $T_{1}+k\tilde{T}, \forall ~k\in \mathds{N}^{+}$.
More specifically, for the estimate of onto mapping, if
\begin{align}
\mathbb{E}\left[\left(\sup\limits_{s\in[T_{1},t]} \left\| \w_{n-1}(s)\right\|_{3}^{2}\right)^{m}\right] \ls &4\mathbb{E}\left[\left(\left(\left\|\w(T_{1})\right\|_{3}^{2} + \left\|\nabla \phi(T_{1})\right\|^{2} \right) \right)^{m}\right]\\
 \ls & 4 \mathbb{E}\left[\left(C\left(\left\|\w_{0}\right\|_{3}^{2} + \left\|\nabla \phi_{0}\right\|^{2} \right)  \right)^{m}\right] , \notag
\end{align}
 then
 \begin{align}
 \mathbb{E}\left[\left(\sup\limits_{s\in[T_{1},t]}\left\| \w_{n}(s)\right\|_{3}^{2}\right)^{m}\right] \ls
 4 \mathbb{E}\left[\left(C\left(\left\|\w_{0}\right\|_{3}^{2} + \left\|\nabla \phi_{0}\right\|^{2} \right)  \right)^{m}\right].
 \end{align}
Similarly, the contraction holds from $T_{1}$ to $T_{1}+\tilde{T}$. Then the existence is extended to $T_{1}+k\tilde{T}$ for any $k\in \mathds{N}^{+}$.
In conclusion, we obtain the global existence of $\w$ and $\phi$, which is equivalent to the global existence of strong solutions $(\rho,\u,\Phi)$ stated by the following proposition.
 \begin{proposition}
In $(\Omega, \mathcal{F}, \mathbb{P})$, there exists a unique global-in-time strong solution $(\rho,\u,\Phi)$ to \eqref{3-D Euler Poisson}:
\begin{align}
 \rho,  \u \in  C\left([0,T]; H^{3}\left(U\right)\right), \Phi\in  C\left([0,T]; H^{5}\left(U\right)\right), \forall ~T>0,
\end{align}
up to a modification, where $m\geqslant 2$ is a constant.
\end{proposition}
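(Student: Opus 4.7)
The plan is to combine the local existence produced in Steps 1--3 with the uniform \textit{a priori} energy estimate of \S 2.2 via a continuation argument, and then translate back from the perturbation variables $(\sigma,\u,\phi)$ to the physical unknowns $(\rho,\u,\Phi)$.

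First, recall that Banach's fixed point theorem applied to the iteration \eqref{iteration system} already yields, on some short interval $[0,T_{1}]$, a unique local strong solution $\w\in L^{2m}(\Omega;C([0,T_{1}];H^{3}(U)))$ and $\phi\in L^{2m}(\Omega;C([0,T_{1}];H^{5}(U)))$ of the symmetrized system \eqref{symmetrized system}, with lifespan $T_{1}$ depending only on $\mathbb{E}[(\|\w_{0}\|_{3}^{2}+\|\nabla\phi_{0}\|^{2})^{m}]$. The second ingredient is the closed energy inequality \eqref{energy estimate for insulating bdy con}. Under the smallness hypothesis \eqref{initial conditions assumption}, the cubic remainder $C\int_{0}^{t}\|\w\|_{3}^{3}\,\d s$ on the right is dominated by the coercive quadratic $\alpha\int_{0}^{t}\|\w\|_{3}^{2}\,\d s$ on the left as soon as $\|\w\|_{3}$ stays below $\alpha/C$; absorbing it yields
\begin{align}
\mathbb{E}\Big[\big(\sup_{s\in[0,t]}(\|\w(s)\|_{3}^{2}+\|\nabla\phi(s)\|^{2})\big)^{m}\Big]\ls \mathbb{E}\Big[\big(C(\|\w_{0}\|_{3}^{2}+\|\nabla\phi_{0}\|^{2})\big)^{m}\Big]
\end{align}
with a constant $C$ independent of $t$.

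Next I would iterate in time. At the endpoint $T_{1}$, the displayed bound still enforces smallness of $\|\w(T_{1})\|_{3}^{2}+\|\nabla\phi(T_{1})\|^{2}$ in the $m$-th moment, so the local construction can be restarted with $T_{1}$ as the new initial time. Because the local lifespan $\tilde T$ depends only on the size of the initial data, which by the uniform estimate has not grown, one gets a fresh interval $[T_{1},T_{1}+\tilde T]$ of the same length; uniqueness on the overlap is guaranteed by the contraction estimate \eqref{Contraction conclusion}. Concatenating and iterating $k$ times produces a solution on $[0,T_{1}+k\tilde T]$ for every $k\in\mathbb{N}^{+}$, hence on any prescribed $[0,T]$.

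Finally, since $(\sigma,\phi)\leftrightarrow(\rho-\bar\rho,\Phi-\bar\Phi)$ is a smooth affine change of variables and $(\bar\rho,\bar\Phi)$ are smooth by Proposition \ref{exist of steady state}, the regularity transfers: $\rho,\u\in C([0,T];H^{3}(U))$ and $\Phi\in C([0,T];H^{5}(U))$, $\mathbb{P}$ a.s. up to a modification, via the Kolmogorov--Centov argument invoked in Step 2. The main obstacle is that, \emph{a priori}, both the local-existence time $\tilde T$ and the contraction constant in \eqref{Contraction conclusion} depend on the running size $M=\sup_{s}\|\hat\sigma,\hat\u\|_{3}$, so without control on $M$ the lifespans could shrink to zero and the iteration fail; the resolution is exactly the uniform-in-time bound derived from \eqref{energy estimate for insulating bdy con} combined with the smallness \eqref{initial conditions assumption}, which keeps $M$ below a fixed threshold on every sub-interval and thereby secures a uniform $\tilde T$.
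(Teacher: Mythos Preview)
Your proposal is correct and follows essentially the same route as the paper: derive the uniform-in-time bound from \eqref{energy estimate for insulating bdy con} by absorbing the cubic remainder via smallness, then restart the local construction at $T_{1}$ and iterate on intervals of fixed length $\tilde T$ to reach any $T>0$. Your explicit remark that the lifespan $\tilde T$ could shrink without the uniform control on $M$ is the key point, and the paper resolves it exactly as you describe.
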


\subsubsection{For general stochastic forces}
 If the stochastic forces has linear growth in $\rho\u$, then the following energy estimates hold
  \begin{align}
 &\mathbb{E}\left[\left|\sup\limits_{s\in[0,t]} \int_{0}^{s} \d \left(\left\|\w\right\|_{3}^{2}(s)+\left\|\nabla \phi\right\|^{2}(s)\right)+\alpha \int_{0}^{t} \int_{U}\left(\left\|\w\right\|_{3}^{2}(s)+\left\|\nabla \phi\right\|^{2}(s)\right)\d x \d s \right|^{m}\right]\notag \\
 \ls &\mathbb{E}\left[\left|\int_{0}^{t} C \left\|\w\right\|_{3}^{3} \d s \right|^{m}\right]+\mathbb{E}\left[\left|\int_{0}^{t} C \left\|\w\right\|_{3}^{2} \d s \right|^{m}\right].
 \end{align}
 Without the small perturbation of initial data \eqref{initial conditions assumption}, we can use the generalized Gr\"onwall's inequality to obtain
\begin{align}\label{estimate without small pert}
\mathbb{E}\left[\left(\sup\limits_{s\in[0,t]}\left(\left\|\w\right\|_{3}^{2}+ \left\|\nabla \phi\right\|^{2}\right)\right)^{m} \right]\ls \mathbb{E}\left[\left(C(t)\left(\left\|\w_{0}\right\|_{3}^{2} + \left\|\nabla \phi_{0}\right\|^{2} \right)  \right)^{m}\right],
\end{align}
where $C(t)$ is increasing with respect to $t$.
Similarly, if the stochastic forces have cubic growth in $\rho\u$, then the energy estimates become
   \begin{align}
 &\mathbb{E}\left[\left|\sup\limits_{s\in[0,t]} \int_{0}^{s} \d \left(\left\|\w\right\|_{3}^{2}(s)+\left\|\nabla \phi\right\|^{2}(s)\right)+\alpha \int_{0}^{t} \int_{U}\left(\left\|\w\right\|_{3}^{2}(s)+\left\|\nabla \phi\right\|^{2}(s)\right)\d x \d s \right|^{m}\right]\notag\\
 \ls &\mathbb{E}\left[\left|\int_{0}^{t} C \left\|\w\right\|_{3}^{3} \d s \right|^{m}\right]+\mathbb{E}\left[\left|\int_{0}^{t} C \left\|\w\right\|_{3}^{4} \d s \right|^{m}\right].
 \end{align}
 By the generalized Gr\"onwall's inequality, there also holds \eqref{estimate without small pert}.
Hence, for the smooth $Y$ in \eqref{condition for F for 3-d} and can be bounded by the homogeneous polynomials, the estimates of \eqref{estimate without small pert} holds as well. For the estimate of onto mapping, for any fixed $T$, $t\in [0,T]$, if
\begin{align}
\mathbb{E}\left[\left(\sup\limits_{s\in[T_{1},t]} \left\| \w_{n-1}(s)\right\|_{3}^{2}\right)^{m}\right] \ls &4\mathbb{E}\left[\left(\left(\left\|\w(T_{1})\right\|_{3}^{2} + \left\|\nabla \phi(T_{1})\right\|^{2} \right) \right)^{m}\right]\\
 \ls & 4 \mathbb{E}\left[\left(C(T)\left(\left\|\w_{0}\right\|_{3}^{2} + \left\|\nabla \phi_{0}\right\|^{2} \right)  \right)^{m}\right] , \notag
\end{align}
 then
 \begin{align}
 \mathbb{E}\left[\left(\sup\limits_{s\in[T_{1},t]}\left\| \w_{n}(s)\right\|_{3}^{2}\right)^{m}\right] \ls
 4 \mathbb{E}\left[\left(C(T)\left(\left\|\w_{0}\right\|_{3}^{2} + \left\|\nabla \phi_{0}\right\|^{2} \right)  \right)^{m}\right].
 \end{align}
Thus, we extend the local existence on $\left[0, T_{1}\right]$ to $\left[0, T_{1}+\tilde{T}\right]$, and to $\left[0, T_{1}+k\tilde{T}\right], \forall ~k\in \mathds{N}^{+}$. By Zorn's lemma, the global existence holds.

\section{Asymptotic stability of solutions}

In this section, we consider the stability under the assumptions of \eqref{2 Condition for stochastic force} and \eqref{initial conditions assumption}. The {\it a priori} estimates \eqref{energy estimate for insulating bdy con} shows the stability of solutions around the steady state. However, \eqref{energy estimate for insulating bdy con} is insufficient for investigating the decay rate since the {\it a priori} estimates are already in the form of time integrals rather than a differential inequality. Integrating twice with respect time might not be wise as it could lead to disappearance of the favorable temporal properties.
The asymptotic decay of solution is then derived from the following weighted estimates up to second-order. To manipulate the weighted energy estimates for stochastic system, we need multiply $\d  \left(\frac{1}{2}\mathcal{D}\w\cdot \w \right)$ directly with $e^{\alpha t }$ first, where $\alpha$ is in \eqref{energy estimate for insulating bdy con}. Then we integrate it with respect to $x$, $t$, and $\omega$, to estimate the time integral.

\subsection{Weighted decay estimates}\label{Weighted decay estimates}

\subsubsection{Zeroth-order weighted estimates}\label{Zeroth-order weighted estimates}
\smallskip
We multiply \eqref{equation of D w w} with $e^{\alpha t }$, then we have
\begin{align}\label{weighted equation of D w w}
 & \int_{U}  e^{\alpha t } \d  \left(\frac{1}{2}\mathcal{D}\w\cdot \w \right) \d x \notag\\
=& e^{\alpha t }\int_{U}\frac{1}{2} \w \left(\d \mathcal{D}\right)\w\d x -e^{\alpha t }\int_{U}\left(\tilde{\mathcal{A}}^{1}\w_{,1}\cdot \w + \tilde{\mathcal{A}}^{2}\w_{,2}\cdot \w+ \tilde{\mathcal{A}}^{3}\w_{,3}\cdot \w +\B \w\cdot \w\right) \d x \d t \notag \\
 & -e^{\alpha t }\int_{U} \tilde{\mathcal{L}}_{\u}\cdot \w \d x \d t  + e^{\alpha t }\int_{U} \tilde{\mathcal{L}}_{\phi}\cdot \w \d x \d t +e^{\alpha t }\int_{U} \nabla h\left(\sigma\right)\cdot \w \d x \d t \\
 &  + e^{\alpha t }\int_{U}\mathcal{D} \mathbb{F}\d  W\cdot \w\d x + e^{\alpha t }\int_{U} \mathcal{D}\mathbb{F}\cdot\mathbb{F}\d x \d t. \notag
\end{align}
From the estimates of zeroth-order estimates in subsection \ref{Estimates up to third-order subtitle}, we conclude the following estimates omitting detailed calculation:
\begin{align}
\int_{U} e^{\alpha t }\left(-\frac{1}{2}\left(\w \tilde{\mathcal{A}}^{1}_{1}\w +\w \tilde{\mathcal{A}}^{2}_{2}\w + \w \tilde{\mathcal{A}}^{3}_{3}\w \right)+ \B\left|\w\right|^{2}\right) \d x \d t \ls C e^{\alpha t }\left\|\w\right\|_{3}^{3}\d t;
\end{align}
\begin{align}
e^{\alpha t } \int_{U}\tilde{\mathcal{L}}_{\u}\cdot \w \d x \d t \geqslant e^{\alpha t } \int_{U} C\bar{\rho}\left|\u\right|^{2}\d x \d t \geqslant \alpha e^{\alpha t } \int_{U} C\bar{\rho}\left|\u\right|^{2}\d x \d t;
\end{align}
\begin{align}
e^{\alpha t } \int_{U} \tilde{\mathcal{L}}_{\phi}\cdot \w \d x \d t=-e^{\alpha t } \d  \int_{U}\left|\nabla\phi\right|^{2} \d x;
\end{align}
\begin{align}
 e^{\alpha t } \int_{U} \w \left(\d \mathcal{D}\right)\w\d x \ls  C e^{\alpha t }\left\|\w\right\|_{3}^{3}\d t ;
\end{align}
\begin{align}
 e^{\alpha t } \int_{U} \frac{1}{2} \mathcal{D} \mathbb{F}\cdot \mathbb{F}\d x \d t \ls C  e^{\alpha t}\left\|\w\right\|_{3}^{3}\d t.
\end{align}
For the estimates of stochastic integral, it holds
\begin{align}
 e^{\alpha t }\int_{U} \tilde{f}\cdot \w \d x \ls  e^{\alpha t } \left\|\w\right\|_{3}^{3}+  e^{\alpha t }\left|\int_{U} \mathbb{F}\d  W\cdot\u\d x \right|.
\end{align}
For $\left|\mathbb{F}\right|\ls C \left|\rho\u\right|^{2}$,
\begin{align}
    &\mathbb{E}\left[\left|\int_{0}^{t}  e^{\alpha s }\int_{U} \mathbb{F}\cdot\u\d x \d  W \right|^{m}\right]\ls \mathbb{E}\left[\left(C\int_{0}^{t} e^{2\alpha s } \left|\int_{U} \mathbb{F}\cdot\u\d x\right|^{2}\d s\right)^{\frac{m}{2}}\right]\notag\\
\ls &  \mathbb{E}\left[\left(C\int_{0}^{t}  e^{2\alpha s } \left|\int_{U} \left|\bar{\rho}\u\right|^{2}\left|\u\right|\d x\right|^{2}\d s\right)^{\frac{m}{2}}\right]\ls \mathbb{E}\left[\left(C\sup\limits_{s\in [0,t]} \left\|\u \right\|^{2} \int_{0}^{t}  e^{2\alpha s } \left\| \u \right\|_{3}^{4} \d s\right)^{\frac{m}{2}}\right] \notag\\
\ls & e^{\alpha mt} \mathbb{E}\left[\left(\sup\limits_{s\in [0,t]} \left\|\u \right\|^{2}\right)^{m}\right] + e^{\alpha mt}\mathbb{E}\left[\left(C\int_{0}^{t} \left\| \u \right\|_{3}^{3} \d s\right)^{m}\right]\\
\ls & e^{\alpha mt}\mathbb{E}\left[\left(C\int_{0}^{t} \left\| \u \right\|_{3}^{3} \d s\right)^{m}\right],\notag
\end{align}
where the last inequality holds due to the zeroth-order estimates in subsection \ref{Estimates up to third-order subtitle}, $C$ is a general constant.
In summary, as $\bar{\rho}$ have a positive lower bound, we have
\begin{align}\label{zero order estimate of w 3-D summary}
&\mathbb{E}\left[\left(\int_{0}^{t} e^{\alpha s } \d \left(\int_{U}\left|\w\right|^{2}\d x + \int_{U}\left|\nabla\phi\right|^{2}\d x\right) \right)^{m}\right]+\mathbb{E}\left[\left(\int_{0}^{t} e^{\alpha s }\int_{U}\left|\u\right|^{2}\d x \d s \right)^{m}\right]\notag \\
\ls  &e^{\alpha mt} \mathbb{E}\left[\left(C\int_{0}^{t} \left\|\w\right\|_{3}^{3} \d s\right)^{m}\right].
\end{align}

Next, we give the estimates of $ \int_{0}^{t} e^{\alpha s }\int_{U} \left\|\sigma\right\|^{2} \d x \d s$.
From the velocity equation \eqref{deformed sto Euler-Poisson}, we have
\begin{align}\label{weighted zero order plus estimate eq}
&e^{\alpha t } \left(\nabla Q\left(\bar{\rho}+\sigma\right) -\nabla Q\left(\bar{\rho}\right)\right)\d t\\
 = &-e^{\alpha t }\d  \u -e^{\alpha t }\left(\left(\u \cdot\nabla \right)\u-\u\right)\d t + e^{\alpha t }\nabla \phi \d t + e^{\alpha t }\frac{\mathbb{F}}{\bar{\rho}+\sigma}\d  W, \notag
\end{align}
with
\begin{align}
 &\nabla \left(Q\left(\bar{\rho}+\sigma\right)-Q\left(\bar{\rho}\right)\right)
= Q'\left(\bar{\rho}+\sigma\right)\nabla \sigma+Q''\left(\bar{\rho}\right)\sigma \nabla\bar{\rho}+ \mathbf{h},\notag
\end{align}
where
\begin{align}
h_{i}=O\left(\sigma^{2}\right).
\end{align}
We multiply the equation \eqref{weighted zero order plus estimate eq} with $\left(\sigma, \sigma, \sigma\right)^{T}$. The left side of \eqref{weighted zero order plus estimate eq} is
\begin{align}
e^{\alpha t } \int_{U} \left|Q''\left(\bar{\rho}\right)\nabla\bar{\rho}\right| \left|\sigma\right|^{2}\d x + e^{\alpha t } \int_{U} O \left(\sigma^{3}\right)\d x.
\end{align}
By It\^o's formula,
\begin{align}
e^{\alpha t }\left(\d u^{i}\right) \sigma =e^{\alpha t }\d \left(u^{i} \sigma\right) - e^{\alpha t } u^{i} \d \sigma,
\end{align}
where
\begin{align}
-\int_{0}^{t} e^{\alpha s }\d \int_{U} \left(u^{i} \sigma\right) \d x \ls \int_{0}^{t}  e^{\alpha s }\d  \left(\frac{1}{2}\left\|\sigma\right\|^{2} + \frac{1}{2}\left\|u^{i}\right\|^{2}\right) .
\end{align}
By the continuity equation, it holds
\begin{align}
\int_{0}^{t} e^{\alpha s }\int_{U} \left|u^{i} \d \sigma\right|\d x \ls C\int_{0}^{t} e^{\alpha s }\left\|\w\right\|_{3}^{3} \d s.
\end{align}
For $-\u\d t$, we directly estimate
\begin{align}
\int_{0}^{t}\int_{U} e^{\alpha s }\left| -u^{i}\sigma\right| \d x \d s \ls \frac{\delta_{4}}{2}\int_{0}^{t} e^{\alpha s }\left\|\sigma\right\|^{2}\d s +C_{\delta_{4}}\int_{0}^{t}\left\|u^{i}\right\|^{2} \d s,
\end{align}
where $\delta_{4}$ is small such that $ \delta_{4}\int_{0}^{t} e^{\alpha s } \left\|\sigma\right\|^{2}\d s$ can be balanced by the left side. For the term $\nabla \phi \d t$ in \eqref{zero order plus estimate eq}, we estimate
\begin{align}
\int_{0}^{t}\int_{U} e^{\alpha s }\left|-\phi_{,i}\sigma\right|\d x\d s \ls \frac{\delta_{4}}{2}\int_{0}^{t} e^{\alpha s } \left\|\sigma\right\|^{2}\d s + C_{\delta_{4}} e^{\alpha t } \sup_{s\in [0,t]}\left\|\phi_{,i}\right\|^{2}.
\end{align}
For the stochastic term, since $\left|\mathbb{F}\right|\ls C\left|\rho\u\right|^{2}$, we estimate
\begin{align}
&\mathbb{E}\left[\left|\int_{0}^{t} e^{\alpha s }\int_{U}\frac{\mathbb{F}^{i}}{\bar{\rho}+\sigma}\d  W \sigma \d x\right|^{m}\right]
\ls \mathbb{E}\left[\left|C\int_{0}^{t} e^{\alpha s }\left|\int_{U}\frac{\mathbb{F}^{i}}{\bar{\rho}+\sigma} \sigma \d x\right|^{2} \d s\right|^{\frac{m}{2}}\right]\notag \\
\ls & \mathbb{E}\left[\left|C\int_{0}^{t} e^{\alpha s }\left|\int_{U}\left|\bar{\rho}+\sigma\right|\left|\u\right|^{2} \sigma \d x\right|^{2} \d s\right|^{\frac{m}{2}}\right]
\ls  \mathbb{E}\left[\left|C\int_{0}^{t} e^{\alpha s }\left\|\u\right\|^{2}\left\|\bar{\rho}\sigma+\sigma^{2}\right\|_{\infty}^{2}\d s\right|^{\frac{m}{2}}\right] \notag \\
\ls &  e^{\alpha mt} \mathbb{E}\left[\left(\frac{1}{4}\sup_{s\in [0,t]}\left\|\u\right\|^{2}\right)^{m}\right] +\mathbb{E}\left[\left(C\int_{0}^{t} e^{\alpha s }\left\|\u\right\|^{2}\left\|\sigma\right\|_{\infty}^{2}\d s\right)^{m}\right]  \notag\\
&+ e^{\alpha mt} \mathbb{E}\left[\left(\frac{1}{4}\sup_{s\in [0,t]} \left\|\u\right\|^{2}\right)^{m}\right] +  \mathbb{E}\left[\left(C\int_{0}^{t} e^{\alpha s }\left\|\u\right\|^{2}\left\|\sigma\right\|_{\infty}^{4}\d s\right)^{m}\right] \\
\ls & e^{\alpha mt}\mathbb{E}\left[\left(\frac{1}{2}\sup_{s\in [0,t]} \left\|\u\right\|^{2}\right)^{m}\right] + \mathbb{E}\left[\left(C\int_{0}^{t} e^{\alpha s } \left\|\w\right\|_{3}^{3}\d s\right)^{m}\right] \notag\\
\ls & e^{\alpha mt} \mathbb{E}\left[\left(C\int_{0}^{t} \left\|\w\right\|_{3}^{3}\d s\right)^{m}\right] + \mathbb{E}\left[\left(C\int_{0}^{t} e^{\alpha s } \left\|\w\right\|_{3}^{3}\d s\right)^{m}\right].\notag
\end{align}
Therefore, we have
\begin{align}\label{zero order plus sigma estimate}
&\mathbb{E}\left[\left(\int_{0}^{t} e^{\alpha s } \left\|\sigma\right\|^{2}\d s\right)^{m}\right]\notag \\
\ls & \mathbb{E}\left[\left(\int_{0}^{t} e^{\alpha s } \d  \left(\frac{1}{2}\left\|\sigma\right\|^{2} + \frac{1}{2}\left\|\u\right\|^{2}\right)\right)^{m}\right]\\
&+ e^{\alpha mt} \mathbb{E}\left[\left(C\int_{0}^{t}\left\|\w\right\|_{3}^{3}\d s\right)^{m}\right] + \mathbb{E}\left[\left(C\int_{0}^{t} e^{\alpha s } \left\|\w\right\|_{3}^{3}\d s\right)^{m}\right]\notag\\
\ls &  \mathbb{E}\left[\left(\int_{0}^{t} e^{\alpha s } \d  \left(\frac{1}{2}\left\|\sigma\right\|^{2} + \frac{1}{2}\left\|\u\right\|^{2}\right)\right)^{m}\right]+e^{\alpha mt} \mathbb{E}\left[\left(C\int_{0}^{t} \left\|\w\right\|_{3}^{3}\d s\right)^{m}\right]. \notag
\end{align}

Similarly, the estimates for $\mathbb{E}\left[\left(\int_{0}^{t} e^{\alpha s } \left\|\nabla \phi \right\|^{2}\d s\right)^{m}\right]$ holds:
\begin{align}\label{zero order plus phi estimate}
&\mathbb{E}\left[\left(\int_{0}^{t}  e^{\alpha s }\left\|\nabla \phi\right\|^{2}\d s\right)^{m}\right]\notag \\
\ls & \mathbb{E}\left[\left(\int_{0}^{t} e^{\alpha s } \d  \left(\frac{1}{2}\left\|\nabla \phi\right\|^{2} + \frac{1}{2}\left\|\u\right\|^{2}\right)\right)^{m}\right]\\
&+ e^{\alpha mt} \mathbb{E}\left[\left(C\int_{0}^{t}\left\|\w\right\|_{3}^{3}\d s\right)^{m}\right] + \mathbb{E}\left[\left(C\int_{0}^{t} e^{\alpha s } \left\|\w\right\|_{3}^{3}\d s\right)^{m}\right]\notag\\
\ls &  \mathbb{E}\left[\left(\int_{0}^{t} e^{\alpha s } \d  \left(\frac{1}{2}\left\|\nabla \phi\right\|^{2} + \frac{1}{2}\left\|\u\right\|^{2}\right)\right)^{m}\right]+e^{\alpha mt} \mathbb{E}\left[\left(C\int_{0}^{t} \left\|\w\right\|_{3}^{3}\d s\right)^{m}\right]. \notag
\end{align}

Multiplying a small constant to \eqref{zero order plus sigma estimate} and \eqref{zero order plus phi estimate}, we plus the zero-order estimates \eqref{zero order estimate of w 3-D} such that
\begin{align}
&\mathbb{E}\left[\left(\int_{0}^{t} e^{\alpha s }\d  \left(\frac{1}{2}\left\|\sigma\right\|^{2}+ \left\|\u\right\|^{2}+ \frac{1}{2}\left\|\nabla \phi \right\|^{2}\right)\right)^{m}\right]
\end{align}
 can be balanced by \eqref{zero order estimate of w 3-D}. Then we obtain
\begin{align}
&\mathbb{E}\left[\left(\int_{0}^{t} e^{\alpha s }\d  \left(\left\|\w\right\|^{2}+  \left\|\nabla\phi\right\|^{2} \right)+ \alpha\int_{0}^{t} e^{\alpha s } \left( \left\|\w\right\|^{2}+\left\|\nabla\phi\right\|^{2}\right) \d s \right)^{m}\right]\\
 \ls  &e^{\alpha mt} \mathbb{E}\left[\left(C \int_{0}^{t}\left\|\w\right\|_{3}^{3} \d t\right)^{m}\right].\notag
\end{align}

\subsubsection{First-order weighted estimates}

Multiplying \eqref{1 order derivative of symmetrized system} by $e^{\alpha t }\nabla \w$ and integrating it over $U$, we can repeat the argument from subsection \ref{Zeroth-order weighted estimates} to obtain:
\begin{align}
&\mathbb{E}\left[\left| \int_{0}^{t} e^{\alpha s } \d  \left( \int_{U} \mathcal{D} \nabla \w : \nabla \w  \d x\right)\right|^{m}\right] + \mathbb{E}\left[\left|\int_{0}^{t} \alpha e^{\alpha s }\int_{U}\left|\nabla \w \right|^{2}\d x \d s \right|^{m}\right] \\
\ls  &  \mathbb{E}\left[\left|e^{\alpha t } \int_{0}^{t} C \left\|\w\right\|_{3}^{3} \d s\right|^{m}\right]. \notag
\end{align}

\subsubsection{Second-order weighted estimates}

Similarly, we multiply \eqref{system of 2 order derivative} with $e^{\alpha t } \partial^{2} \w$, and then integrate it on $U$. Repeating the procedure in subsection \ref{Zeroth-order weighted estimates}, we have
\begin{align}
 &\mathbb{E}\left[\left| \int_{0}^{t} e^{\alpha s } \d  \left(\int_{U} \left| \partial^{2} \w \right|^{2} \d x\right) \right|^{m}\right]
  + \mathbb{E}\left[\left| \int_{0}^{t} \alpha e^{\alpha s } \int_{U}\left|\partial^{2} \w \right|^{2}\d x \d s\right|^{m}\right] \\
  \ls & \mathbb{E}\left[\left|e^{\alpha t }  \int_{0}^{t} C \left\|\w\right\|_{3}^{3} \d s \right|^{m}\right].\notag
\end{align}

\subsubsection{Third-order weighted estimates}

 Considering the $3$-order weighted estimates, following the standard bootstrap of subsection \ref{Zeroth-order weighted estimates}, we have
 \begin{align}
 &\mathbb{E}\left[\left| \int_{0}^{t} e^{\alpha s }\d \left( \int_{U}  \left| \partial^{3} \w \right|^{2} \d x\right) \right|^{m}\right] +  \mathbb{E}\left[\left| \int_{0}^{t} \alpha e^{\alpha s }\int_{U}\left|\partial^{3} \w\right|^{2}\d x \d s \right|^{m}\right]\\
  \ls &\mathbb{E}\left[\left|e^{\alpha t }\int_{0}^{t} C \left\|\w\right\|_{3}^{3} \d s \right|^{m}\right].\notag
 \end{align}

\subsection{Asymptotic stability}

Combining the weighted estimates in the previous subsections, we obtain
 \begin{align}
 &\mathbb{E}\left[\left| \int_{0}^{t} e^{\alpha s }\d \left(\left\|\w\right\|_{3}^{2} + \left\|\nabla \phi\right\|^{2} \right) +\int_{0}^{t} \alpha e^{\alpha s }\left(\left\|\w\right\|_{3}^{2}+\left\|\nabla \phi\right\|^{2}\right)\d s \right|^{m}\right]\\
  \ls &\mathbb{E}\left[\left|e^{\alpha t }\int_{0}^{t} C \left\|\w\right\|_{3}^{3} \d s \right|^{m}\right].\notag
 \end{align}
Therefore, we have
 \begin{align}
 &\mathbb{E}\left[\left|  e^{\alpha t }\left(\left\|\w\right\|_{3}^{2} + \left\|\nabla \phi\right\|^{2} \right) \right|^{m}\right]\\
  \ls &\mathbb{E}\left[\left|\left(\left\|\w_{0}\right\|_{3}^{2} + \left\|\nabla \phi_{0}\right\|^{2} \right) \right|^{m}\right] + \mathbb{E}\left[\left|e^{\alpha t }\int_{0}^{t} C \left\|\w\right\|_{3}^{3} \d s \right|^{m}\right].\notag
 \end{align}
 Since $\left\|\w_{0}\right\|_{3}^{2} + \left\|\nabla \phi_{0}\right\|^{2}$ is small, we have
 \begin{align}
&\mathbb{E} \left[\left|e^{\alpha t }\left(\left\|\w\right\|_{3}^{2} + \left\|\nabla \phi\right\|^{2} \right) - e^{\alpha t }\int_{0}^{t} C \left\|\w\right\|_{3}^{3} \d s\right|^{m}\right]
\ls\mathbb{E}\left[\left|\left(\left\|\w_{0}\right\|_{3}^{2} + \left\|\nabla \phi_{0}\right\|^{2} \right) \right|^{m}\right].
\end{align}
 We estimate
\begin{align}
 e^{\alpha t } \int_{0}^{t}\left\|\w\right\|_{3}^{3}\d s
\ls e^{\alpha t } t \sup\limits_{s\in[0,t]}\left\|\w\right\|_{3}^{3}\ls e^{\frac{3\alpha t }{2}}\sup\limits_{s\in[0,t]}\left\|\w\right\|_{3}^{3} .\notag
\end{align}
Therefore, we obtain the asymptotic decay estimates
\begin{align}
\mathbb{E} \left[\left|\sup\limits_{s\in[0,t]}\left(\left\|\w\right\|_{3}^{2} + \left\|\nabla \phi\right\|^{2} \right)\right|^{m}\right] \ls e^{-\alpha m t }\mathbb{E}\left[\left|C\left(\left(\left\|\w_{0}\right\|_{3}^{2} + \left\|\nabla \phi_{0}\right\|^{2} \right)\right) \right|^{m}\right],
\end{align}
on account that $\left\|\w_{0}\right\|_{3}^{2} + \left\|\nabla \phi_{0}\right\|^{2} $ is sufficiently small, where $m\geqslant 2$.

\section{Invariant measures}\label{section invariant measure}

The law generated by the initial data $\z_{0}:=\left(\rho_{0}, \u_{0}, \Phi_{0}\right)$ in probability space $\left(\Omega, \mathcal{F}, \mathbb{P}\right)$ is denoted by $\mathcal{L}\left(\z_{0}\right)$. We denote $\mathscr{H}:=H^{3}\left(U\right)\times H^{3}\left(U\right)\times H^{5}\left(U\right)$. With the initial data $\z_{0}:=\left(\rho_{0}, \u_{0}, \Phi_{0}\right) \in \mathscr{H}$ and the assumptions of \eqref{2 Condition for stochastic force} and \eqref{initial conditions assumption}, SEP system \eqref{3-D Euler Poisson} admits a unique strong solution
\begin{align}
\z(t,x,\omega):=\left(\rho, \u, \Phi\right) \in \mathscr{H}.
\end{align}
Let $\mathcal{S}_t$ be the transition semigroup \cite{Da-Prato-Zabczyk2014}:
\begin{align}\label{def trans semigroup}
\mathcal{S}_t \psi(\z_{0})=\mathbb{E}[\psi\left(\z((t, \z_{0}))\right)],  \quad t \geqslant 0,
\end{align}
where $\psi$ is the bounded function on $\mathscr{H}$, i.e., $\psi \in C_b(\mathscr{H})$.
 $\mathcal{S}(t, \z_{0}, \Gamma)$ is the transition function:
\begin{align}
\qquad \mathcal{S}(t, \z_{0}, \Gamma) := \mathcal{S}_t (\z_{0}, \Gamma)=\mathcal{S}_t \mathds{1}_{\Gamma}(\z_{0})=\mathcal{L}\left( \z(t, \z_{0})\right)(\Gamma), ~ \z_{0} \in \mathscr{H}, ~ \Gamma \in \mathscr{B}(\mathscr{H}), ~t \geqslant 0.
\end{align}

For $\v_{0}:=\left(\rho_{0}-\bar{\rho}, \u_{0}, \Phi_{0}-\bar{\Phi}\right)$ in probability space $\left(\Omega, \mathcal{F}, \mathbb{P}\right)$, the perturbed system \eqref{deformed sto Euler-Poisson} admits a unique strong solution
\begin{align}
\v(t,x,\omega):=\left(\rho-\bar{\rho}, \u, \Phi-\bar{\Phi}\right) \in \mathscr{H}.
\end{align}
 $\tilde{\mathcal{S}}_t$ is the transition semigroup:
\begin{align}\label{def trans semigroup}
\tilde{\mathcal{S}}_t \psi(\v_{0})=\mathbb{E}[\psi\left(\v((t, \v_{0}))\right)],  \quad t \geqslant 0,
\end{align}
where $\psi$ is the bounded function on $\mathscr{H}$, i.e., $\psi \in C_b(\mathscr{H})$. The transition function for the perturbed system \eqref{deformed sto Euler-Poisson} is denoted by $\tilde{\mathcal{S}}(t, \z_{0}, \Gamma)$.


We give the definition of stationary solution for \eqref{3-D Euler Poisson}.
\begin{definition}
 A strong solution $\left(\rho; \u; \Phi\right)$ to system \eqref{3-D Euler Poisson} under the initial boundary conditions \eqref{insulated boundary condition}-\eqref{initial conditions} is called stationary, provided that the
transition function $\left(\mathcal{S}_{\tau} \rho, \mathcal{S}_{\tau} \u, \mathcal{S}_{\tau} \Phi\right)$ on
$
C\left([0,T]; H^{3}\left(U\right)\right)\times  C\left([0,T]; H^{3}\left(U\right)\right)\times C\left([0,T]; H^{5}\left(U\right)\right)
$
is independent of $\tau \geqslant 0$.
\end{definition}

Let $\mathscr{M}\left(\mathscr{H}\right)$ be the space of all bounded measures on $\left(\mathscr{H}, \mathscr{B}\left(\mathscr{H}\right)\right)$. For any $\psi \in C_b\left(\mathscr{H}\right)$ and any $\mu \in \mathscr{M}\left(\mathscr{H}\right)$, we set
\begin{align}
\langle \psi, \mu\rangle_{\mathscr{H}}=\int_{\mathscr{H}} \psi(x) \mu(\d x).
\end{align}
For $t \geqslant 0$, $\mu \in \mathscr{M}\left(\mathscr{H}\right)$, $\mathcal{S}_t^*$ acts on $\mathscr{M}\left(\mathscr{H}\right)$ by
\begin{align}
\mathcal{S}_t^* \mu(\Gamma)=\int_{\mathscr{H}} \mathcal{S}(t, x, \Gamma) \mu(\d x), \quad \Gamma \in \mathscr{B}\left(\mathscr{H}\right) .
\end{align}
Moreover, there holds
\begin{align}
\left\langle\psi, \mathcal{S}_t^* \mu\right\rangle_{\mathscr{H}}=\left\langle \mathcal{S}_t \psi, \mu\right\rangle_{\mathscr{H}}, \quad \forall ~\psi \in C_b(\mathscr{H}), \quad\mu \in \mathscr{M}\left(\mathscr{H}\right).
\end{align}
Particularly, for the perturbed system \eqref{deformed sto Euler-Poisson} and $\v_{0}:=\left(\rho_{0}-\bar{\rho}, \u_{0}, \Phi_{0}-\bar{\Phi}\right)$ in probability space $\left(\Omega, \mathcal{F}, \mathbb{P}\right)$, there holds $\mathcal{S}_t^* \mathscr{L}(\v_{0}) =\mathscr{L}(\v(t, \v_{0}))$.
%
In other words,
\begin{align}
\left(\mathcal{S}_{t}\psi\right) \mathcal{L}\left(\v_{0}\right)=\mathbb{E}\left[\psi\left(\v(t)\right)\right],
\end{align}
 where $\psi\in C_{b}(\mathscr{H})$.
\begin{definition}
A measure $\mu$ in $\mathscr{M}(\mathscr{H})$ is said to be an invariant (stationary) measure if
\begin{align}
P_t^* \mu=\mu, \quad \forall ~t>0 .
\end{align}
\end{definition}


The Dirac measure centered at the steady state $\left(\bar{\rho}, 0, \bar{\Phi}\right)$ is the invariant measure for the \eqref{steady state for nd insulating}, since it keeps unchange after the action of the transition semigroup for \eqref{steady state for nd insulating}. 

For $\z_{0} \in \mathscr{H}$ and $T>0$, the formula
\begin{align}
\frac{1}{T} \int_0^T \mathcal{S}_t(\z_{0}, \Gamma) \d t=R_T(\z_{0}, \Gamma), \quad \Gamma \in  \mathscr{B}\left(\mathscr{H}\right),
\end{align}
defines a probability measure. For any $\nu \in \mathscr{M}(H), R_T^* \nu$ is defined as follows:
\begin{align}
R_T^* \nu(\Gamma)=\int_{\mathscr{H}} R_T(x, \Gamma) \nu(\d x), \quad \Gamma \in \mathscr{B}\left(\mathscr{H}\right).
\end{align}
For any $\psi \in C_b(\mathscr{H})$, there holds
\begin{align}
\left\langle R_T^* \nu, \psi\right\rangle_{\mathscr{H}}=\frac{1}{T} \int_0^T\left\langle \mathcal{S}_t^* \nu, \psi\right\rangle_{\mathscr{H}} \d t .
\end{align}

 $\mathcal{S}_t$, is a Feller semigroup provided that, for arbitrary $\psi \in C_b\left(\mathscr{H}\right)$, the function
\begin{align}
[0,+\infty) \times \mathscr{H}, \quad(t, x) \mapsto \mathcal{S}_t \psi(x)
\end{align}
is continuous. Since the solution is continuous and unique, we do not need the Markov selection as in \cite{Flandoli-Romito2008,Hofmanova-Zhu-Zhu2022}.

The method of constructing an invariant measure described in the following theorem is due to Krylov-Bogoliubov \cite{KRYLOV-BOGOLIUBOV1937}.
\begin{theorem}
 If for some $\nu \in \mathscr{M}\left(\mathscr{H}\right)$ and some sequence $T_n \uparrow+\infty, R_{T_n}^* \nu \rightarrow \mu$ weakly as $n \rightarrow \infty$, then $\mu$ is an invariant measure for Feller semigroup $\mathcal{S}_t, t \geqslant 0$.
\end{theorem}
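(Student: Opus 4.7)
The plan is to verify invariance by duality, establishing that $\langle \mathcal{S}_t \psi, \mu\rangle_{\mathscr{H}} = \langle \psi, \mu\rangle_{\mathscr{H}}$ for every $\psi \in C_b(\mathscr{H})$ and $t \geqslant 0$, which is equivalent to $\mathcal{S}_t^* \mu = \mu$ by the adjoint relation already recorded in the excerpt. Fix such a $\psi$ and $t$. Since $\mathcal{S}_t$ is a Feller semigroup (and continuous dependence of the SEP solution on initial data has been established in Section 2, so the Feller property holds here), the function $\mathcal{S}_t \psi$ again belongs to $C_b(\mathscr{H})$, making it a legitimate test function against the weakly convergent sequence $R_{T_n}^* \nu$.

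The key computation is to rewrite $\langle \mathcal{S}_t \psi, R_{T_n}^* \nu\rangle_{\mathscr{H}}$ using the semigroup property $\mathcal{S}_s \mathcal{S}_t = \mathcal{S}_{s+t}$ and the definition of $R_{T_n}^*$:
\begin{align*}
\langle \mathcal{S}_t \psi, R_{T_n}^* \nu\rangle_{\mathscr{H}}
 = \frac{1}{T_n}\int_0^{T_n} \langle \mathcal{S}_s \mathcal{S}_t \psi, \nu\rangle_{\mathscr{H}} \d s
 = \frac{1}{T_n}\int_0^{T_n} \langle \mathcal{S}_{s+t}\psi, \nu\rangle_{\mathscr{H}} \d s
 = \frac{1}{T_n}\int_t^{T_n+t} \langle \mathcal{S}_s \psi, \nu\rangle_{\mathscr{H}} \d s.
\end{align*}
Splitting the integration interval $[t, T_n + t] = [0,T_n] \cup [T_n, T_n+t] \setminus [0,t]$, the difference with $\langle \psi, R_{T_n}^*\nu\rangle_{\mathscr{H}}$ reduces to two boundary integrals of length $t$. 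Using $|\langle \mathcal{S}_s \psi, \nu\rangle_{\mathscr{H}}| \leqslant \|\psi\|_\infty \|\nu\|_{\mathrm{TV}}$, one obtains the uniform bound
\begin{align*}
\bigl| \langle \mathcal{S}_t \psi, R_{T_n}^* \nu\rangle_{\mathscr{H}} - \langle \psi, R_{T_n}^* \nu\rangle_{\mathscr{H}} \bigr|
\leqslant \frac{2 t \|\psi\|_\infty \|\nu\|_{\mathrm{TV}}}{T_n} \longrightarrow 0
\end{align*}
as $n \to \infty$.

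Finally, the weak convergence $R_{T_n}^* \nu \rightharpoonup \mu$ applied with the test functions $\psi$ and $\mathcal{S}_t \psi$ (both in $C_b(\mathscr{H})$) yields $\langle \psi, R_{T_n}^* \nu\rangle_{\mathscr{H}} \to \langle \psi, \mu\rangle_{\mathscr{H}}$ and $\langle \mathcal{S}_t \psi, R_{T_n}^*\nu\rangle_{\mathscr{H}} \to \langle \mathcal{S}_t \psi, \mu\rangle_{\mathscr{H}}$. Combined with the vanishing of the boundary remainder, this forces $\langle \mathcal{S}_t \psi, \mu\rangle_{\mathscr{H}} = \langle \psi, \mu\rangle_{\mathscr{H}}$. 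Since $\psi \in C_b(\mathscr{H})$ and $t \geqslant 0$ were arbitrary, $\mathcal{S}_t^* \mu = \mu$ for all $t \geqslant 0$, proving invariance.

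The only substantive point to verify in the SEP context is the Feller property itself, since everything else is soft functional analysis; this is where the continuous dependence on initial data and the Markov selection issue \emph{do not arise here}, thanks to pathwise uniqueness of the strong solution established in Section 2. The rest is essentially a telescoping argument exploiting that the length-$t$ deviation between the empirical time averages on $[0,T_n]$ and $[t, T_n+t]$ is negligible on the scale $T_n \to \infty$.
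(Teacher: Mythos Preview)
Your proof is correct and is precisely the standard Krylov--Bogoliubov argument. Note, however, that the paper does not actually prove this theorem: it is stated there as a classical result attributed to Krylov and Bogoliubov (with a citation), and is then used as a black box in the proof of Theorem~\ref{exists of invariant meas}. What you have written is the canonical proof one finds in references such as Da~Prato--Zabczyk, so there is nothing to compare against in the paper itself.
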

 The following lemma is obtained similarly to \cite{Breit-Feireisl-Hofmanova-Maslowski2019}, and we provide a proof for the convenience of the readers. $\v_{t}^{\v_{0}}$ represents the stochastic process initiated from $\v_{0}$ for the sake of expediency in exposition.
\begin{lemma}
The SEP \eqref{deformed sto Euler-Poisson} defines a Feller-Markov process, i.e., $\tilde{\mathcal{S}}_{t}: C_{b}(\mathscr{H})\ra C_{b}(\mathscr{H})$, and
\begin{align}
\mathbb{E}\left[\left.\psi\left(\v_{t+s}^{\v_{0}}\right)\right|\mathcal{F}_{t}\right]=\left(\tilde{\mathcal{S}}_{s}\psi\right)\left(\v_{t}^{\v_{0}}\right), \quad \forall ~\v_{0}\in \mathscr{H}, \quad \psi\in C_{b}(\mathscr{H}), \quad \forall ~t,s>0,
\end{align}
\end{lemma}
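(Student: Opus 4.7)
The plan is to prove the two assertions separately: first the Feller property $\tilde{\mathcal{S}}_t\colon C_b(\mathscr{H})\to C_b(\mathscr{H})$, and then the Markov relation. Both rely on machinery already developed in the paper.

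For the Feller property, I would establish continuous dependence of $\v_t^{\v_0}$ on $\v_0$ in an $L^{2m}(\Omega;\mathscr{H})$ sense. Given two initial data $\v_0^1,\v_0^2\in\mathscr{H}$, with corresponding solutions $\v^1,\v^2$ of \eqref{deformed sto Euler-Poisson}, the difference $\v^1-\v^2$ satisfies a system entirely analogous to the contraction system \eqref{contraction system}, with the iteration roles $(\w_n,\w_{n-1},\w_{n-2})$ replaced by $(\v^1,\v^2,\v^2)$ (or symmetrically by $(\v^1,\v^1,\v^2)$). Repeating verbatim the symmetrization, It\^o's formula, Burkholder-Davis-Gundy estimate on the stochastic terms (using the Lipschitz-type bounds \eqref{2 Condition for stochastic force} together with the uniform $\|\w\|_3$-bound from Section 2.2), and Gr\"onwall's inequality from Step 3 gives
\begin{align*}
\mathbb{E}\!\left[\sup_{s\in[0,t]}\|\v^1_s-\v^2_s\|_3^{2m}\right]\;\ls\;C(t)\,\mathbb{E}\!\left[\|\v_0^1-\v_0^2\|_3^{2m}\right].
\end{align*}
Hence if $\v_0^n\to\v_0$ in $\mathscr{H}$ deterministically, then $\v_t^{\v_0^n}\to\v_t^{\v_0}$ in probability in $\mathscr{H}$. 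For any $\psi\in C_b(\mathscr{H})$, dominated convergence (with dominating constant $\|\psi\|_\infty$) then yields $\tilde{\mathcal{S}}_t\psi(\v_0^n)\to\tilde{\mathcal{S}}_t\psi(\v_0)$, which is the desired continuity; boundedness is immediate from $|\tilde{\mathcal{S}}_t\psi|\leqslant\|\psi\|_\infty$.

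For the Markov property I would use the standard shift-and-freezing argument driven by pathwise uniqueness. Fix $t>0$ and set $\tilde W(s):=W(t+s)-W(t)$. Then $\tilde W$ is a cylindrical Wiener process on $\mathcal{H}$, independent of $\mathcal{F}_t$ and with the same law as $W$. Define $\v'_s:=\v^{\v_0}_{t+s}$ for $s\geqslant 0$; rewriting the mild/strong form of \eqref{deformed sto Euler-Poisson} on $[t,t+s]$ and shifting time shows that $\v'$ solves the same SPDE on $[0,s]$ driven by $\tilde W$ with initial datum $\v^{\v_0}_t$, which is $\mathcal{F}_t$-measurable. By the uniqueness of strong solutions proved in Step 3, $\v^{\v_0}_{t+s}=\v^{\v^{\v_0}_t}_s(\tilde W)$ $\mathbb{P}$-a.s., where $\v^{y}_s(\tilde W)$ denotes the solution started from $y$ and driven by $\tilde W$.

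Applying the classical freezing lemma for conditional expectations (independence of $\tilde W$ from $\mathcal{F}_t$, $\mathcal{F}_t$-measurability of $\v^{\v_0}_t$) gives
\begin{align*}
\mathbb{E}\!\left[\psi(\v^{\v_0}_{t+s})\mid\mathcal{F}_t\right]=\mathbb{E}\!\left[\psi(\v^{y}_s(\tilde W))\right]\Big|_{y=\v^{\v_0}_t}=\bigl(\tilde{\mathcal{S}}_s\psi\bigr)(\v^{\v_0}_t),
\end{align*}
where the second equality uses $\mathcal{L}(\tilde W)=\mathcal{L}(W)$ and the fact that $\tilde{\mathcal{S}}_s\psi(y)=\mathbb{E}[\psi(\v^y_s)]$ depends only on the law of the driving noise. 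The delicate point, and the main technical obstacle, is justifying the continuous-dependence inequality above: the noise coefficient $\mathbb{F}(\rho,\u)=a_k\rho\u Y(\rho,\u)$ is nonlinear and only locally Lipschitz, so one must exploit the uniform $H^3$-bound already secured in Section 2 to convert it into a genuine Lipschitz bound on trajectories, taking care that the symmetrizer $\mathcal{D}(\hat\sigma^1)-\mathcal{D}(\hat\sigma^2)$-type discrepancies and the boundary integrals vanish exactly as in \eqref{2 energy est in existence to ls} thanks to the insulating boundary condition. Once this is done the Markov step is standard and the lemma follows.
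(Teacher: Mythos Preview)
Your proposal is correct and follows essentially the same route as the paper. The paper disposes of the Feller property in one line (``from the continuity of solutions''), which you justify more carefully via the contraction estimates; for the Markov property the paper argues by approximating the $\mathcal{F}_t$-measurable initial datum $\v_t^{\v_0}$ by simple functions and then invoking the Feller property to pass to the limit, whereas you invoke the freezing lemma directly---these are two standard phrasings of the same argument, both relying on independence of the shifted increments $W(t+\cdot)-W(t)$ from $\mathcal{F}_t$ and on pathwise uniqueness.
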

\begin{proof}
From the continuity of solutions, it is easy to see the Feller property that $\mathcal{S}_{t}: C_{b}(\mathscr{H})\ra C_{b}(\mathscr{H})$ is continuous. For the Markov property, it suffices to prove
\begin{align}
\mathbb{E}\left[\psi\left(\v_{t+s}^{\v_{0}}\right) X\right]=\mathbb{E}\left[\mathcal{S}_{s}\psi\left(\v_{t}^{\v_{0}}\right)X\right],
\end{align}
where $X\in \mathcal{F}_{t}$.

Let $\mathbf{D}$ be any $\mathcal{F}_t$-measurable random variable. We denote $\mathbf{D}_{n}=\sum\limits_{i=1}^n \mathbf{D}^i \mathbf{1}_{\Omega^i}$, where $\mathbf{D}^i \in H$ are deterministic and $\left(\Omega^i\right) \subset \mathcal{F}_t$ is a collection of disjoint sets such that $\bigcup\limits_i \Omega^i=\Omega$.  $\mathbf{D}_n \rightarrow \mathbf{D}$ in $\mathscr{H}$ implies $\mathcal{S}_t \varphi\left(\mathbf{D}_n\right) \rightarrow \mathcal{S}_t \varphi(\mathbf{D})$ in $\mathscr{H}$. For every deterministic $\mathbf{D} \in \mathcal{F}_t$, the random variable $\v_{t, t+s}^{\mathbf{D}}$ depends only on the increments of the Brownian motion $W_{t+s}-W_{t}$ and hence it is independent of $\mathcal{F}_t$. Therefore, it holds
\begin{align}
\mathbb{E}\left[\psi\left(\v_{t, t+s}^{\mathbf{D}}\right) X\right]=\mathbb{E}\left[\psi\left(\v_{t, t+s}^{\mathbf{D}}\right)\right] \mathbb{E}[X], \quad \forall~\mathbf{D}\in \mathcal{F}_t.
\end{align}
Since $\v_{t, t+s}^{\mathbf{D}}$ has the same law as $\v_s^{\mathbf{D}}$ by uniqueness, we have
\begin{align}
\mathbb{E}\left[\psi\left(\v_{t, t+s}^{\mathbf{D}}\right) X\right]=\mathbb{E}\left[\psi\left(\v_s^{\mathbf{D}}\right)\right] \mathbb{E}[X]=\mathcal{S}_s \psi(\mathbf{D}) \mathbb{E}[X]=\mathbb{E}\left[\mathcal{S}_s \psi(\mathbf{D}) X\right].
\end{align}
Thus, there holds
\begin{align}
\mathbb{E}\left[\varphi\left(\v_{t, t+s}^{\mathbf{D}}\right) X\right]=\mathbb{E}\left[\left(\mathcal{S}_s \varphi\right)(\mathbf{D}) X\right]
\end{align}
 for every $\mathbf{D}$. By uniqueness, we have
\begin{align}
\v_{t+s}^{\v_{0}}=\v_{t, t+s}^{\v_{t}}, \quad  \mathbb{P}\quad {\rm a.s.},
\end{align}
which completes the proof. \hfill $\square$
\end{proof}
We shall prove the tightness of the law
\begin{align}\label{required tightness of the law}
\left\{\frac{1}{T}\int_{0}^{T} \mathcal{L}\left(\w(t)\right) \times \mathcal{L}\left(\phi(t)\right) \d t,\quad T>0\right\},
\end{align}
so as to apply Krylov-Bogoliubov's theorem.
\begin{theorem}\label{exists of invariant meas}
There exists an invariant measure for the system \eqref{deformed sto Euler-Poisson}.
\end{theorem}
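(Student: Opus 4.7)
The plan is to deduce the existence of an invariant measure from the Krylov-Bogoliubov scheme applied to the Feller-Markov transition semigroup $\tilde{\mathcal{S}}_t$ of the perturbed system \eqref{deformed sto Euler-Poisson}. The Feller property and the Markov property have already been verified in the preceding lemma, so the task reduces to establishing the tightness of the Ces\`aro-averaged laws $R_T^{*}\delta_{\v_0}$ announced in \eqref{required tightness of the law}; an application of Prokhorov's theorem will then extract a weakly convergent subsequence whose limit Krylov-Bogoliubov's theorem identifies as an invariant measure for $\tilde{\mathcal{S}}_t$.

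The first ingredient I would record is the uniform-in-$t$ moment bound
\begin{align}
\sup_{t\geqslant 0}\mathbb{E}\left[\left\|\v(t,\v_{0})\right\|_{\mathscr{H}}^{2m}\right] \ls C,\notag
\end{align}
which is an immediate by-product of the asymptotic decay estimate established in Section 3: under the smallness hypothesis \eqref{initial conditions assumption} the exponential factor $e^{-\alpha m t}$ renders the right-hand side bounded uniformly in $t$. Small initial data are therefore mandatory here, as emphasised in the commentary on \eqref{initial conditions assumption}.

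To promote this moment bound into tightness I would introduce, for some small $\eps_{0}\in (0,1)$, the slightly weaker Polish space $\mathscr{H}_{\eps_{0}}:=H^{3-\eps_{0}}(U)\times H^{3-\eps_{0}}(U)\times H^{5-\eps_{0}}(U)$. By Rellich-Kondrachov the embedding $\mathscr{H}\hookrightarrow\mathscr{H}_{\eps_{0}}$ is compact, so every closed ball $B_{R}^{\mathscr{H}}$ is precompact in $\mathscr{H}_{\eps_{0}}$. Chebyshev's inequality (see Appendix \ref{append}) combined with the uniform moment bound then gives
\begin{align}
\mathbb{P}\left[\left\|\v(t,\v_{0})\right\|_{\mathscr{H}}>R\right]\ls \frac{C}{R^{2m}}<\eta,\notag
\end{align}
for all $t\geqslant 0$ once $R=R(\eta)$ is taken large, and since this lower bound $1-\eta$ for the mass in $B_{R}^{\mathscr{H}}$ is preserved under the Ces\`aro average $R_{T}(\v_{0},\cdot)=\frac{1}{T}\int_{0}^{T}\tilde{\mathcal{S}}_{t}(\v_{0},\cdot)\d t$, the family $\{R_{T}^{*}\delta_{\v_{0}}\}_{T>0}$ is tight on $\mathscr{H}_{\eps_{0}}$.

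Applying Prokhorov's theorem furnishes $T_{n}\uparrow+\infty$ and a probability measure $\mu$ on $\mathscr{H}_{\eps_{0}}$ with $R_{T_{n}}^{*}\delta_{\v_{0}}\rightharpoonup\mu$; Krylov-Bogoliubov's theorem, stated earlier in this section, then identifies $\mu$ as an invariant measure of $\tilde{\mathcal{S}}_{t}$. Lower semicontinuity of $\|\cdot\|_{\mathscr{H}}$ with respect to $\mathscr{H}_{\eps_{0}}$-weak convergence, combined once more with the uniform moment bound, forces $\int\|\v\|_{\mathscr{H}}^{2m}\,\d\mu(\v)<\infty$, so $\mu$ is actually concentrated on $\mathscr{H}$, which is what is required. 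The main delicate point in this strategy is the compact embedding step: it is what converts the moment control given by the weighted energy estimates into genuine tightness, and it also requires me to verify that Feller continuity remains compatible with the slightly weaker topology in which tightness is obtained; both facts are standard for the smooth bounded domain $U\subset\mathds{R}^{3}$ and follow from the continuous dependence of solutions on initial data established in Section 2.
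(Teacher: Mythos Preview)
Your proposal is correct and follows essentially the same Krylov--Bogoliubov strategy as the paper: uniform-in-time moment bounds combined with a compact Sobolev embedding and Chebyshev's inequality yield tightness of the time-averaged laws, and the preceding Feller--Markov lemma then delivers the invariant measure. The only cosmetic differences are that the paper invokes the compact embedding $H^{3}(U)\hookrightarrow C^{1}(\bar U)$ rather than your $H^{3}\hookrightarrow H^{3-\eps_{0}}$, and cites the uniform bound from the global-existence estimates of Section~2 rather than the decay estimate of Section~3; under the small-data hypothesis these choices are interchangeable, and your added remark that $\mu$ is concentrated on $\mathscr{H}$ via lower semicontinuity is a welcome refinement the paper leaves implicit.
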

\begin{proof}
From the energy estimates of global existence, we know that
\begin{align}\label{energy in proof of inv meas}
\mathbb{E}\left[\left(\sup\limits_{t\in[0,T]}\left\|\w(t)\right\|_{3}^{2}\right)^{m}\right]\ls \mathbb{E}\left[\left(C\left(\left\|\w_{0}\right\|_{3}^{2} + \left\|\nabla \phi_{0}\right\|^{2} \right) \right)^{m}\right].
\end{align}
The sets
\begin{align}
B_{L}:=\left\{\left.\w(t)\in H^{3}(U)\right|\left\|\w(t)\right\|_{3}\ls L\right\},\quad L>0,
\end{align}
is compact in $C^{1}(U)$.
Consequently, there holds
\begin{align}
\frac{1}{T}\int_{0}^{T} \mathcal{L}\left(\w(t)\right)\left(B_{L}^{c}\right) \d t
=&\frac{1}{T}\int_{0}^{T}\mathbb{P}\left[\left\{\left\|\w(t)\right\|_{3}>L\right\}\right]\d t  \notag \\
\ls &\frac{1}{L^{2m}T}\int_{0}^{T}\mathbb{E}\left[\left\|\w(t)\right\|_{3}^{2m}\right]\d t \\
\ls &\frac{1}{L^{2m}}\mathbb{E}\left[\left(C \left(\left\|\w_{0}\right\|_{3}^{2} + \left\|\nabla \phi_{0}\right\|^{2} \right) \right)^{m}\right]\notag \\
 &\ra 0, \text{ as } L\ra +\infty.\notag
\end{align}
This gives the tightness of  $\frac{1}{T}\int_{0}^{T} \mathcal{L}\left(\w(t)\right)\d t$. The tightness of $\frac{1}{T}\int_{0}^{T} \mathcal{L}\left(\phi(t)\right)\d t$ is obtained similarly due to the energy estimate
\begin{align}
\mathbb{E} \left[\left| \sup\limits_{t\in[0,T]}\left\|\nabla \phi(t)\right\|^{2} \right|^{m}\right] \ls \mathbb{E}\left[\left(C\left(\left\|\w_{0}\right\|_{3}^{2} + \left\|\nabla \phi_{0}\right\|^{2} \right) \right)^{m}\right].
\end{align}
 Hence the tightness of \eqref{required tightness of the law} holds. Therefore, there exists an invariant measure by Krylov-Bogoliubov's theorem.  \hfill $\square$
\end{proof}

\begin{remark}
In the above proof, we need the constant in energy estimate \eqref{energy in proof of inv meas} is independent on $T$. That is the reason why we assume \eqref{2 Condition for stochastic force} and \eqref{initial conditions assumption}.
\end{remark}
\eqref{3-D Euler Poisson} define a Feller-Markov process as well, similarly to \eqref{deformed sto Euler-Poisson}. Since $\left(\bar{\rho}, \bar{\u}, \bar{\phi}\right)$ is smooth, by the uniqueness of solutions, $\frac{1}{T}\int_{0}^{T} \mathcal{L}\left(\rho\right)\times\mathcal{L}\left(\u\right)\times\mathcal{L}\left(\Phi\right) \d s$ is also a tight measure, which generates an invariant measure. Actually,
for compact sets
\begin{align}
B_{\rho,L}=\left\{\left.\rho\in H^{3}(U)\right|\left\|\rho\right\|_{3}\ls L\right\},\quad L>0,
\end{align}
 in $C^{1}(U)$, there holds
\begin{align}
\frac{1}{T}\int_{0}^{T} \mathcal{L}\left(\rho\right)\left(B_{L}^{c}\right) \d t
=&\frac{1}{T}\int_{0}^{T}\mathbb{P}\left[\left\{\left\|\rho\right\|_{3}>L\right\}\right]\d t  \notag \\
\ls &\frac{1}{L^{2m}T}\int_{0}^{T}\mathbb{E}\left[\left\|\rho\right\|_{3}^{2m}\right]\d t \\
\ls &\frac{1}{L^{2m}}C\left(\mathbb{E}\left[\left\|\rho_{0}\right\|_{3}^{2m}\right]
+\mathbb{E}\left[\left\|\bar{\rho}\right\|_{3}^{2m}\right]\right)\notag \\
 &\ra 0, \text{ as } L\ra +\infty. \notag
\end{align}

We also care about what the limit of $\frac{1}{T}\int_{0}^{T} \mathcal{L}\left(\rho\right)\times\mathcal{L}\left(\u\right)\times\mathcal{L}\left(\Phi\right) \d t$ is.
\begin{theorem}
The invariant measure generated by $\frac{1}{T}\int_{0}^{T} \mathcal{L}\left(\rho\right)\times\mathcal{L}\left(\u\right)\times\mathcal{L}\left(\Phi\right) \d t$, for system \eqref{3-D Euler Poisson}, is the Dirac measure of the steady state $\left(\bar{\rho}, 0, \bar{\Phi}\right)$.
That is, the limit
\begin{align}\lim\limits_{T\ra +\infty} \frac{1}{T}\int_{0}^{T} \mathcal{L}\left(\rho\right)\times\mathcal{L}\left(\u\right)\times\mathcal{L}\left(\Phi\right)\d t=\delta_{\bar{\rho}}\times\delta_{0}\times \delta_{\bar{\Phi}}
\end{align}
 holds weakly.
\end{theorem}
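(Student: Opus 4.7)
The plan is to translate the exponential decay from Section~3 into weak convergence of the marginal laws to the steady-state Dirac measures, and then to transfer this to the Cesaro time-average via a standard Tauberian step.

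First, I would extract marginal convergence in law. From the decay estimate \eqref{asymptotic behavior}, Chebyshev's inequality gives, for every $\delta>0$,
\begin{align*}
\mathbb{P}\bigl[\left\|\rho(t)-\bar\rho\right\|_3 + \left\|\u(t)\right\|_3 + \left\|\nabla\Phi(t)-\nabla\bar\Phi\right\| > \delta\bigr] \ls C\delta^{-2m} e^{-\alpha m t} \to 0 .
\end{align*}
Hence $\rho(t)\to\bar\rho$ and $\u(t)\to 0$ in probability in $H^3(U)$; for $\Phi$, elliptic regularity applied to $\triangle(\Phi-\bar\Phi)=\rho-\bar\rho$ with the Neumann condition $\partial_\nu(\Phi-\bar\Phi)=0$ upgrades the $\|\nabla\cdot\|$ control to $H^5(U)$, so $\Phi(t)\to\bar\Phi$ in probability in $H^5(U)$. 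Since convergence in probability to a deterministic element is equivalent to weak convergence of the associated law to the Dirac mass at that element, we obtain $\mathcal{L}(\rho(t))\Rightarrow\delta_{\bar\rho}$, $\mathcal{L}(\u(t))\Rightarrow\delta_0$, $\mathcal{L}(\Phi(t))\Rightarrow\delta_{\bar\Phi}$.

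Second, I would exploit the product structure of the averaged measure. For any tensor test function $\psi_1\otimes\psi_2\otimes\psi_3$ with $\psi_i$ bounded continuous on the corresponding factor, the definition of product measure and Fubini give
\begin{align*}
\int \psi_1\otimes\psi_2\otimes\psi_3 \, d\bigl(\mathcal{L}(\rho(t))\times\mathcal{L}(\u(t))\times\mathcal{L}(\Phi(t))\bigr) = \mathbb{E}[\psi_1(\rho(t))]\,\mathbb{E}[\psi_2(\u(t))]\,\mathbb{E}[\psi_3(\Phi(t))].
\end{align*}
By the first step and the Portmanteau theorem, each factor converges to $\psi_i$ evaluated at the corresponding steady-state component; since the factors are uniformly bounded by $\|\psi_i\|_\infty$, the product converges to $\psi_1(\bar\rho)\psi_2(0)\psi_3(\bar\Phi)$. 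Setting $\mu_T:=\frac{1}{T}\int_0^T\mathcal{L}(\rho)\times\mathcal{L}(\u)\times\mathcal{L}(\Phi)\, dt$, Cesaro's theorem (a bounded convergent integrand has the same mean) yields
\begin{align*}
\lim_{T\to\infty}\int \psi_1\otimes\psi_2\otimes\psi_3 \, d\mu_T = \psi_1(\bar\rho)\psi_2(0)\psi_3(\bar\Phi) = \int \psi_1\otimes\psi_2\otimes\psi_3 \, d(\delta_{\bar\rho}\times\delta_0\times\delta_{\bar\Phi}).
\end{align*}

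Finally, to conclude weak convergence $\mu_T\Rightarrow\delta_{\bar\rho}\times\delta_0\times\delta_{\bar\Phi}$, I invoke that tensor products of $C_b$-functions form a determining class on the product Borel space $\mathscr{H}$ (by a monotone class argument) and combine this with the tightness of $\{\mu_T\}$ established in the proof of Theorem~\ref{exists of invariant meas}: every weak cluster point is identified with the same product of Diracs, so the full family converges. The main obstacle to anticipate is not the limit identification (which is essentially automatic once $L^{2m}$-convergence to a constant is in hand), but making sure the decay \eqref{asymptotic behavior} actually delivers convergence in the topology of $\mathscr{H}=H^3\times H^3\times H^5$; this is where the elliptic regularity step for the Poisson equation is required to promote the $\|\nabla(\Phi-\bar\Phi)\|$ control furnished by \eqref{asymptotic behavior} to $\|\Phi-\bar\Phi\|_5$ before Chebyshev is invoked.
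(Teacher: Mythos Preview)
Your argument is correct and reaches the same conclusion as the paper, but by a different and somewhat cleaner route. The paper works directly with the time average of $\mathbb{E}[\psi(\rho(t))-\psi(\bar\rho)]$ for a single $\psi\in C_b$, splitting $\Omega$ at the $t$-dependent threshold $\{\psi(\rho)-\psi(\bar\rho)\le 1/\sqrt{t}\}$ and estimating the two pieces separately via the exponential decay and Chebyshev; it then repeats the computation for $\u$ and $\Phi$ and concludes by the tightness of the joint averages. You instead first establish the pointwise-in-$t$ statement $\mathcal{L}(\rho(t))\Rightarrow\delta_{\bar\rho}$ (and likewise for the other components) by the standard implication ``convergence in probability to a constant $\Rightarrow$ convergence in law'', and only then pass to the Ces\`aro mean, using tightness plus the determining property of tensor test functions to upgrade from products to the full product measure. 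Your approach avoids the ad hoc threshold $1/\sqrt{t}$ and makes the product-measure structure of $\mu_T$ explicit; the paper's computation is more self-contained but in effect re-derives the ``probability $\Rightarrow$ law'' implication by hand. Your observation that elliptic regularity is needed to promote the $\|\nabla(\Phi-\bar\Phi)\|$ control to $H^5$ before invoking Chebyshev in that topology is a point the paper leaves implicit.
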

\begin{proof}
For any $\psi\in C_{b}\left(H^{3}\right)$, we have
\begin{align}
\lim_{T\ra +\infty}\frac{1}{T} \int_{0}^{T} \langle \mathcal{L}\left(\rho\right),\psi \rangle_{\mathscr{H}} \d t = &\lim_{T\ra +\infty} \frac{1}{T}  \int_{0}^{T} \mathbb{E}\left[ \psi\left(\rho\right) \right] \d t\\
=& \lim_{T\ra +\infty} \frac{1}{T} \int_{0}^{T} \left( \mathbb{E}\left[ \psi\left(\rho\right)-\psi\left(\bar{\rho}\right)  \right] + \mathbb{E}\left[ \psi\left(\bar{\rho}\right) \right]\right) \d t.\notag
\end{align}
We claim that $\lim\limits_{T\ra +\infty} \frac{1}{T}\int_{0}^{T}  \mathbb{E}\left[ \psi\left(\rho\right)-\psi\left(\bar{\rho}\right)  \right] \d s=0$. Actually,
 we separate $\Omega$ into
\begin{align}
\Omega_{t}=\left\{\psi\left(\rho\right)-\psi\left(\bar{\rho}\right)\ls \frac{1}{\sqrt{t}}\right\}, \quad t>0,
\end{align}
and $\Omega_{t}^{c}$.
Then there holds
\begin{align}
 \mathbb{E}\left[ \psi\left(\rho\right) - \psi\left(\bar{\rho}\right) \right]=& \int_{\Omega}\left( \psi\left(\rho\right) - \psi\left(\bar{\rho}\right)\right) \mathbb{P}\left(\d \omega\right)\notag\\
= & \int_{\Omega\cap \Omega_{t}}\left( \psi\left(\rho\right) - \psi\left(\bar{\rho}\right)\right) \mathbb{P}\left(\d \omega\right)+\int_{\Omega\cap \Omega_{t}^{c}}\left( \psi\left(\rho\right) - \psi\left(\bar{\rho}\right)\right) \mathbb{P}\left(\d \omega\right) \\
=&I_{1}+ I_{2}.\notag
\end{align}
 For $I_{1}$, it holds
\begin{align}
\lim_{T\ra +\infty} \frac{1}{T}\int_{0}^{T}  \int_{\Omega\cap \Omega_{t}}\left( \psi\left(\rho\right) - \psi\left(\bar{\rho}\right)\right) \mathbb{P}\left(\d \omega\right)\d t
\ls \lim_{T\ra +\infty} \frac{1}{T}\int_{0}^{T} \frac{1}{\sqrt{t}}\d t =0.
\end{align}
For $I_{2}$, by the weighted energy estimates and Chebyshev's inequality, there holds
\begin{align}
 &\int_{\Omega\cap \Omega_{t}^{c}}\left( \psi\left(\rho\right) - \psi\left(\bar{\rho}\right)\right) \mathbb{P}\left(\d \omega\right)
\ls\int_{\Omega\cap \Omega_{t}^{c}}\left(\left| \psi\left(\rho\right) \right|+\left| \psi\left(\bar{\rho}\right)\right| \right) \mathbb{P}\left(\d \omega\right)\notag\\
\ls & C\int_{\Omega\cap \Omega_{t}^{c}}\left(\left\|\rho\right\|_{3}+\left\|\bar{\rho}\right\|_{3}\right)\mathbb{P}\left(\d \omega\right)\ls C\mathbb{P}\left[\left\{\psi\left(\rho\right)-\psi\left(\bar{\rho}\right)> \frac{1}{\sqrt{t}}\right\}\right]\\
\ls & C\frac{\mathbb{E}\left[\left| \psi\left(\rho\right) - \psi\left(\bar{\rho}\right)\right|^{2m}\right]}{\left(\frac{1}{\sqrt{t}}\right)^{2m}}\ls Ct^{m}e^{-\gamma m t}\mathbb{E}\left[\left| \rho_{0} - \bar{\rho}\right|^{2m}\right].\notag
\end{align}
Hence, we have
\begin{align}
\lim_{T\ra +\infty} \frac{1}{T}\int_{0}^{T} \int_{\Omega\cap \Omega_{t}^{c}}\left( \psi\left(\rho\right) - \psi\left(\bar{\rho}\right)\right) \mathbb{P}\left(\d \omega\right)\d t \ls \lim_{T\ra +\infty}  C\frac{1}{T}\int_{0}^{T}Ct^{m}e^{-\gamma m t}\d t =0.
\end{align}
Therefore, there holds
\begin{align}
\lim_{T\ra +\infty}\frac{1}{T} \int_{0}^{T} \langle \mathcal{L}\left(\rho\right),\psi \rangle \d t =
=& \lim_{T\ra +\infty} \frac{1}{T} \int_{0}^{T}  \mathbb{E}\left[ \psi\left(\bar{\rho}\right) \right]\d t=\mathbb{E}\left[ \psi\left(\bar{\rho}\right) \right] =\langle \delta_{\bar{\rho}},\psi \rangle.
\end{align}
A similar calculation shows that
\begin{align}
\lim\limits_{T\ra +\infty} \int_{0}^{T}  \frac{1}{T} \mathcal{L}\left(\u\right)\d t=\delta_{0};
\end{align}
and
\begin{align}
\lim\limits_{T\ra +\infty} \int_{0}^{T}  \frac{1}{T} \mathcal{L}\left(\Phi\right)\d t=\delta_{\bar{\Phi}}.
\end{align}
This completes the proof by the tightness of a joint distributions.
\hfill $\square$
\end{proof}

\smallskip
\smallskip
\section{Appendix}\label{append}
We provide an overview of the fundamental theory concerning stochastic analysis.
Let $E$ be a separable Banach space and $\mathscr{B}(E)$ be the $\sigma$-field of its Borel subsets, respectively. Let $\left(\Omega, \mathcal{F}, \mathbb{P}\right)$ be a stochastic basis. A filtration $\mathcal{F}=\left(\mathcal{F}_{t}\right)_{t \in \mathbf{T}}$ is a family of $\sigma$-algebras on $\Omega$ indexed by $\mathbf{T}$ such that $\mathcal{F}_{s} \subseteq$ $\mathcal{F}_{t} \subseteq \mathcal{F}$, $s \leq t$, $s, t \in \mathbf{T}$. $\left(\Omega, \mathcal{F}, \mathbb{P}\right)$ is also called a filtered space.
 We first list some definitions.
\begin{enumerate}
  \item [1.] {\bf $E$-valued random variables. \cite{Da-Prato-Zabczyk2014}}
 For $(\Omega, \mathscr{F})$ and $(E, \mathscr{E})$ being two measurable spaces, a mapping $X$ from $\Omega$ into $E$ such that the set $\{\omega \in \Omega: X(\omega) \in A\}=\{X \in A\}$ belongs to $\mathscr{F}$ for arbitrary $A \in \mathscr{E}$, is called a measurable mapping or a random variable from $(\Omega, \mathscr{F})$ into $(E, \mathscr{E})$ or an $E$-valued random variable.

  \item [2.]{\bf Strongly measurable operator valued random variables. \cite{Da-Prato-Zabczyk2014}} Let $\mathcal{U}$ and $\mathcal{H}$ be two separable Hilbert spaces which can be infinite-dimensional, and denote by $L(\mathcal{U}, \mathcal{H})$ the set of all linear bounded operators from $\mathcal{U}$ into $\mathcal{H}$. A functional operator $\Psi(\cdot)$ from $\Omega$ into $L(\mathcal{U}, \mathcal{H})$ is said to be strongly measurable, if for arbitrary $X \in \mathcal{U}$ the function $\Psi(\cdot) X$ is measurable, as a mapping from $(\Omega, \mathscr{F})$ into $(\mathcal{H}, \mathscr{B}(\mathcal{H}))$. Let $\mathscr{L}$ be the smallest $\sigma$-field of subsets of $L(\mathcal{U}, \mathcal{H})$ containing all sets of the form
\begin{align}
\{\Psi \in L(\mathcal{U}, \mathcal{H}): \Psi X \in A\}, \quad X \in \mathcal{U}, ~A \in \mathscr{B}(\mathcal{H}),
\end{align}
then $\Psi: \Omega \rightarrow L(\mathcal{U}, \mathcal{H})$ is a strongly measurable mapping from $(\Omega, \mathscr{F})$ into $(L(\mathcal{U}, \mathcal{H}), \mathscr{L})$.
\item  [3.]{\bf Law of a random variable.} For an $E$-valued random variable
 $X:(\Omega, \mathcal{F}) \rightarrow (E, \mathscr{E})$, 
we denote by $\mathcal{L}[X]$ the law of $X$ on $E$, that is, $\mathcal{L}[X]$ is the probability measure on $(E, \mathscr{E})$ given by
\begin{equation}
\mathcal{L}[X](A)=\mathbb{P}[X \in A], \quad A \in \mathscr{E}.\\
\end{equation} 
%

 \item [4.]
{\bf Stochastic process. \cite{Da-Prato-Zabczyk2014}}
A stochastic process $X$ is defined as an arbitrary family $X = \{X_t\}_{t\in \mathbf{T}}$ of $E$-valued random variables $X_t$, $t \in \mathbf{T}$. $X$ is also regarded as a mapping from $\Omega$ into a Banach space like $C([0, T] ; E)$ or $L^p=L^p(0, T ; E), 1 \leq p<+\infty$, by associating $\omega \in \Omega$ with the trajectory $X(\cdot, \omega)$.
\item [5.]{\bf Cylindrical Wiener Process valued in Hilbert space.} \cite{Da-Prato-Zabczyk2014} 
 A $\mathcal{U}$-valued stochastic process $W(t), t \geqslant 0$, is called a cylindrical Wiener process if
\begin{itemize}
 \item  $W(0)=0$;
 \item $W$ has continuous trajectories;
 \item $W$ has independent increments;
 \item The distribution of $(W(t)-W(s))$ is $\mathscr{N}(0,(t-s)), \quad 0\ls s \ls t$.
\end{itemize}

  \item [6.]
{\bf Adapted stochastic process.} A stochastic process $X$ is $\mathcal{F}$-adapted if $X_{t}$ is $\mathcal{F}_{t}$-measurable for every $t \in \mathbf{T}$; 

\item [7.]{\bf Martingale.} The $E$-valued process $X$ is called integrable provided $\mathbb{E}\left[\|X_t\|\right]<+\infty$ for every $t \in \mathbf{T}$. An integrable and adapted $E$-valued process $X_t, t \in \mathbf{T}$, is a martingale if
    \begin{itemize}
      \item  $X$ is adapted;
      \item $X_{s}=\mathbb{E}\left[X_{t} \mid \mathcal{F}_{s}\right]$, for arbitrary $t, s \in \mathbf{T},~ 0\ls s \ls  t$.
    \end{itemize}

\item [8.]\label{stopping time def}{\bf Stopping time.} On $\left(\Omega, \mathcal{F}, \mathbb{P}\right)$, a random time is a measurable mapping $\tau: \Omega \rightarrow \mathbf{T} \cup \infty$. A random time is a stopping time if $\{\tau \leq t\} \in \mathcal{F}_{t}$ for every $t \in \mathbf{T}$.
For a process $X$ and a subset $V$ of the state space we define the hitting time of $X$ in $V$ as
\begin{align}
\tau_{V}(\omega)=\inf \left\{\left.t \in \mathbf{T}\right| X_{t}(\omega) \in V\right\}.
\end{align}
If $X$ is a continuous adapted process and $V$ is closed, then $\tau_{V}$ is a stopping time.

 \item [9.]
{\bf Modification.}
 A stochastic process $Y$ is called a modification or a version of $X$ if
\begin{align}
\mathbb{P}[\{\omega \in \Omega: X(t, \omega) \neq Y(t, \omega)\}]=0 \quad \text { for all } t \in \mathbf{T}.
\end{align}

\item [10.]
{\bf Progressive measurability.} In $\left(\Omega,\mathcal{F},\mathbb{P} \right)$, stochastic process $X$ is progressively measurable or simply progressively measurable, if for $\omega\in \Omega$, $(\omega, s) \mapsto X(s,\omega),~ s \ls t$ is $\mathcal{F}_{t} \otimes \mathscr{B}(\mathbf{T} \cap[0, t])$-measurable for every $t \in \mathbf{T}$.

\item [11.]
{\bf Progressive measurability of continuous functions.} Let $X(t), t \in[0, T]$, be a stochastically continuous and adapted process with values in a separable Banach space $E$. Then $X$ has a progressively measurable modification.

  \item [12.]{\bf Cross quadratic variation.} Fixing a number $T>0$, we denote by $\mathcal{M}_T^2(E)$ the space of all $E$-valued continuous, square integrable martingales $M$, such that $M(0)=0$.
If $M \in \mathcal{M}_T^2\left(\mathbb{R}^1\right)$ then there exists a unique increasing predictable process $\langle M(\cdot)\rangle$, starting from 0 , such that the process
\begin{align}
M^2(t)-\langle M(\cdot)\rangle, \quad t \in[0, T]
\end{align}
is a continuous martingale. The process $\langle M(\cdot)\rangle$ is called the quadratic variation of $M$. If $M_1, M_2 \in \mathcal{M}_T^2\left(\mathbb{R}^1\right)$ then the process
\begin{align}
\left\langle M_1(t), M_2(t)\right\rangle=\frac{1}{4}\left[\left\langle\left(M_1+M_2\right)(t)\right\rangle-\left\langle\left(M_1-M_2\right)(t)\right\rangle\right]
\end{align}
is called the cross quadratic variation of $M_1, M_2$. It is the unique, predictable process with trajectories of bounded variation, starting from 0 such that
\begin{align}
M_1(t) M_2(t)-\left\langle M_1(t), M_2(t)\right\rangle, \quad t \in[0, T]
\end{align}
is a continuous martingale.\\
For $M\in\mathcal{M}_T^2(\mathcal{H})$, where $\mathcal{H}$ is Hilbert space,
the quadratic variation is defined by
\begin{align}
\langle M(t)\rangle=\sum_{i, j=1}^{\infty}\left\langle M_i(t), M_j(t)\right \rangle  e_i \otimes e_j , \quad t \in[0, T],
\end{align}
as an integrable adapted process, where $M_i(t)$ and $M_j(t)$ are in $\mathcal{M}_T^2\left(\mathbb{R}^1\right)$. If $a \in \mathcal{H}_1, b \in \mathcal{H}_2$, then $a \otimes b$ denotes a linear operator from $\mathcal{H}_2$ into $\mathcal{H}_1$ given by the formula
\begin{align}
(a \otimes b) x=a\langle b, x\rangle_{\mathcal{H}_2}, ~x \in \mathcal{H}_2.
\end{align}
We define a cross quadratic variation for $M^1 \in \mathcal{M}_T^2\left(\mathcal{H}_1\right)$, $M^2 \in \mathcal{M}_T^2\left(\mathcal{H}_2\right)$ where $\mathcal{H}_1$ and $\mathcal{H}_2$ are two Hilbert spaces. Namely we define
\begin{align}
\left\langle M^1(t), M^2(t)\right\rangle=\sum_{i, j=1}^{\infty}\left\langle M_i^1(t), M_j^2(t)\right\rangle e_i^1 \otimes e_j^2, \quad t \in[0, T],
\end{align}
where $\left\{e_i^1\right\}$ and $\left\{e_j^2\right\}$ are complete orthonormal bases in $\mathcal{H}_1$ and $\mathcal{H}_2$ respectively.

\item [13.]{\bf Stochastic integral.} Let $W$ be the Wiener process. Let $\Psi(t), t \in [0, T ]$, be a measurable Hilbert--Schmidt operators in $L(\mathcal{U}, \mathcal{H})$, which is set in the space $\mathcal{L}_{2}$ such that
    \begin{align}
    \mathbb{E}\left[\int_{0}^{t}\|\Psi(s)\|_{\mathcal{L}_{2}}^2  \d s\right]:=\mathbb{E} \int_0^t \langle\Psi(s),\Psi^{\star}(s)\rangle_{\mathcal{H}}\d s <+\infty,
    \end{align}
where $\langle\cdot,\cdot\rangle_{\mathcal{H}}$ means the inner product in $\mathcal{H}$.
 For the stochastic integral $\int_{0}^{t}\Psi \d W$, there holds
\begin{equation}
\mathbb{E}\left[\left(\int_{0}^{t}\Psi \d W\right)^{2}\right]=\mathbb{E}\left[\int_{0}^{t}\|\Psi(s)\|_{\mathcal{L}_{2}}^2 \d s\right].
\end{equation}
Furthermore, the following properties hold
\begin{itemize}
                                             \item Linearity: $\int(a \Psi_{1}+b \Psi_{2}) \d W=a \int \Psi_{1} \d W+b \int \Psi_{2} \d W$ for constants $a$ and $b$;
                                             \item Stopping property: $\int 1_{\{\cdot \leq \tau\}} \Psi \d W=\int \Psi \d M^{\tau}=\int_{0}^{\cdot \wedge \tau} \Psi \d W$;
                                             \item It\^o-isometry: for every $t$,
\begin{equation}
\mathbb{E}\left[\left(\int_{0}^{t} \Psi \d W\right)^{2}\right]=\mathbb{E}\left[\int_{0}^{t} \|\Psi(s)\|_{\mathcal{L}_{2}}^2 \d s\right].
\end{equation}
\end{itemize}

\item [14.] {\bf Dirac measure}. Let $(E, \mathscr{B}(E))$ be a measurable space. Given $x \in E$, the Dirac measure $\delta_x$ at $x$ is the measure defined by
\begin{equation}
\delta_x(A):= \begin{cases}1, & x \in A \\ 0, & x \notin A\end{cases}
\end{equation}
for each measurable set $A \subseteq E$. In this paper, there holds
$$\delta_{\bar{\rho}}=\mathcal{L}[\bar{\rho}](A) = \mathbb{P}\left[\left\{\omega\in \Omega|\bar{\rho}(x)\in A \right\}\right]=1.$$

\item [15.]{\bf Tightness of measures.} \cite{Billingsley2013tightness}
Let $E$ be a Hausdorff space, and let $\mathscr{E}$ be a $\sigma$-algebra on $E$. Let $\mathscr{M}$ be a collection of measures defined on $\mathscr{E}$. The collection $\mathscr{M}$ is called tight if, for any $\eps>0$, there is a compact subset $K_{\eps}$ of $E$ such that, for all measures $\mu \in \mathscr{M}$,
\begin{equation}\label{tightness def 1}
|\mu|\left(E \backslash K_{\eps}\right)<\eps,
\end{equation}
where $|\mu|$ is the total variation measure of $\mu$. More specially, for probability measures $\mu$, \eqref{tightness def 1} can be written as
\begin{equation}\label{tightness def 2}
\mu\left(K_{\eps}\right)>1-\eps.
\end{equation}

\end{enumerate}
\smallskip

 We list some important theorems in stochastic analysis.

\begin{enumerate}
\item [1.]{\bf It\^o's formula.} \cite{Ito1944,Da-Prato-Zabczyk2014} Assume that $\Psi$ is an $\mathcal{L}_{2}$-valued process stochastically integrable in $[0, T], \varphi$ being a $\mathcal{H}$-valued predictable process Bochner integrable on $[0, T], \mathbb{P}$-a.s., and $X(0)$ being a $\mathscr{F}_{0}$-measurable $\mathcal{\mathcal{H}}$-valued random variable. Then the following process
\begin{align}\label{form of X}
X(t)=X(0)+\int_0^t \varphi(s) d s+\int_0^t \Psi(s) \d W(s), \quad t \in[0, T]
\end{align}
is well defined. Assume that a function $F:[0, T] \times \mathcal{H} \rightarrow \mathbb{R}^1$ and its partial derivatives $F_t, F_x, F_{x x}$, are uniformly continuous on bounded subsets of $[0, T] \times \mathcal{H}$. Under the above conditions, $\mathbb{P}$-a.s., for all $t \in[0, T]$,
\begin{align}
~\qquad  F(t, X(t))= & F(0, X(0))+\int_0^t \left\langle F_x(s, X(s)), \Psi(s) \d W(s)\right\rangle_{\mathcal{H}} \\
& +\int_0^t\left\{F_t(s, X(s))+\left\langle F_x(s, X(s)), \varphi(s)\right\rangle_{\mathcal{H}}
 +\frac{1}{2} F_{x x}(s, X(s))\|\Psi(s)\|_{\mathcal{L}_{2}}^2 \right\} \d s.\notag
\end{align}
Applying the above formula for $F=\langle x, x \rangle_{\mathcal{H}}$, we have It\^o's formula for $\langle X, X \rangle_{\mathcal{H}}$. Then by
\begin{align}
\langle X, Y \rangle_{\mathcal{H}} =\frac{\langle X+Y, X+Y \rangle_{\mathcal{H}} -\langle X-Y, X-Y \rangle_{\mathcal{H}} }{4}
\end{align}
in Hilbert space,
 the following It\^o's formula holds for $X$ and $Y$ in form of \eqref{form of X},
\begin{equation}
\begin{aligned}
\langle X, Y \rangle_{\mathcal{H}} &= \langle X_{0}, Y_{0}\rangle_{\mathcal{H}} +\int \langle X, \d Y \rangle_{\mathcal{H}}+ \int  \langle Y, \d X \rangle_{\mathcal{H}}+\int \d\left\langle~\langle X,Y \rangle,  \langle X,Y \rangle~\right\rangle_{\mathcal{H}}^{\frac{1}{2}}\\
&=\langle X_{0}, Y_{0}\rangle_{\mathcal{H}} +\int \langle X, \d Y \rangle_{\mathcal{H}}+ \int  \langle Y, \d X \rangle_{\mathcal{H}}+\left\langle~\langle X,Y \rangle,  \langle X,Y \rangle~\right\rangle_{\mathcal{H}}^{\frac{1}{2}},\\
\end{aligned}
\end{equation}
where $\langle X,Y\rangle$ means the cross quadratic  variation of $X$ and $Y$ defined above.

\item [2.]{\bf Chebyshev's inequality}. Let $Y$ be a random variable in probability space $\left(\Omega, \mathcal{F}, \mathbb{P}\right)$, $\varepsilon>0$. For every $0<r<\infty$, Chebyshev's inequality reads
\begin{equation}
\mathbb{P}[\left\{|Y| \geq \varepsilon \right\}] \leq \frac{1}{\varepsilon^r}\mathbb{E}\left[|Y|^r\right] .
\end{equation}

\item [3.] {\bf Burkholder-Davis-Gundy's inequality}. \cite{BDG-inequality,Da-Prato-Zabczyk2014} Let $M$ be a continuous local martingale in $\mathcal{H}$. Let $M^{\ast}=\max\limits_{0\ls s\ls t}|M(s)|$, for any $m \geqslant 1$. $\langle M\rangle_{T}$ denotes the quadratic variation stopped by $T$. Then there exist constants $K^{m}$ and $K_{m}$ such that
\begin{equation}
 K_{m} \mathbb{E}\left[\left(\langle M\rangle_{T}\right)^{m}\right]\ls \mathbb{E}\left[\left(M^{\ast}_{T}\right)^{2m}\right]\ls K^{m} \mathbb{E}\left[\left(\langle M\rangle_{T}\right)^{m}\right],
\end{equation}
for every stopping time $T$. For $m\geqslant  1$, $K^{m}=\left(\frac{2m}{2m-1}\right)^{\frac{2m(2m-2)}{2}}$, which is equivalent to $e^{m}$ as $m\ra \infty$.
Specifically, for every $m \geqslant 1$, and for every $t \geqslant 0$, there holds
\begin{equation}
\mathbb{E}\left[ \sup _{s \in[0, t]}\left|\int_0^t \Psi(s) \d W(s)\right|^{2m} \right]\leq  K^{m}\left(\mathbb{E}\left[\int_0^{t} \|\Psi(s)\|_{\mathcal{L}_{2}}^2\d s\right]\right)^{m}
\end{equation}

\item [4.] {\bf Stochastic Fubini theorem}. Assume that $(E, \mathscr{E})$ is a measurable space and let
$$
\Psi:(t, \omega, x) \rightarrow \Psi(t, \omega, x)
$$
be a measurable mapping from $\left(\Omega_T \times E, \mathscr{B}(\Omega_T) \times \mathscr{B}(E)\right)$ into $\left(\mathcal{L}^{2}, \mathscr{B}\left(\mathcal{L}^{2}\right)\right)$. Assume moreover that
\begin{equation}
\int_E \left[\mathbb{E} \int_0^T \langle\Psi(s),\Psi^{\star}(s)\rangle_{\mathcal{H}}\d t\right]^{\frac{1}{2}} \mu(\d x)<+\infty,
\end{equation}
then $\mathbb{P}$-a.s. there holds
\begin{equation}
\int_E\left[\int_0^T \Psi(t, x) \d W(t)\right] \mu(\d x)=\int_0^T\left[\int_E \Psi(t, x) \mu(\d x)\right] \d W(t).
\end{equation}

 \item [5.]\label{Centov thm}{\bf Kolmogorov-Centov's continuity theorem.} \cite{Karatzas1988,Da-Prato-Zabczyk2014} Let $(\Omega, \mathcal{F}, \mathbb{P})$ be a probability space and $\bar{X}$ a process on $[0, T]$ with values in a complete metric space $(E, \mathscr{E})$. Suppose that
\begin{equation}
\mathbb{E}\left[\left|\bar{X}_{t}-\bar{X}_{s}\right|^{a}\right] \leq C|t-s|^{1+b},
\end{equation}
for every $s<t \leq T$ and some strictly positive constants $a, b$ and $C$. Then $\bar{X}$ admits a continuous modification $X$, $\mathbb{P}\left[\left\{X_{t}=\bar{X}_{t}\right\}\right]=1$ for every $t$, and $X$ is locally H\"older continuous for every exponent $0<\gamma<\frac{b }{a},$ namely,
\begin{equation}
\mathbb{P}\left[\left\{\omega: \sum_{0<t-s<h(\omega), t, s \leq T} \frac{\left|X_{t}(\omega)-X_{s}(\omega)\right|}{|t-s|^{\gamma}} \leq \delta\right\}\right]=1,
\end{equation}
where $h(\omega)$ is an strictly positive random variable a.s., and the constant satisfies $\delta>0$.
\end{enumerate}
\smallskip
\smallskip

{\bf Acknowledgements. } The authors would like to express their thanks to Prof. Deng Zhang for the valuable discussions. This work was commenced
when L. Zhang visited McGill University as a joint Ph.D training student supported by China Scholarship Council (CSC). She would like to
express her gratitude to McGill University and CSC.  The research of Y. Li was supported in part
by National Natural Science Foundation of China under grants 12371221, 12161141004, and 11831011. Y. Li was also grateful to the supports by the Fundamental Research Funds for the Central Universities and Shanghai Frontiers Science Center of Modern Analysis. The research of M. Mei was supported by NSERC grant RGPIN 2022-03374 and NNSFC Grant W2431005.



\end{document}